\DeclareMathAlphabet{\mathpzc}{OT1}{pzc}{m}{it}
\newtheorem{theorem}{Theorem}[section]
\newtheorem{corollary}[theorem]{Corollary}
\newtheorem{definition}[theorem]{Definition}
\newtheorem{lemma}[theorem]{Lemma}
\newtheorem{proposition}[theorem]{Proposition}
\newtheorem{remark}[theorem]{Remark}
\newtheorem*{main}{Main Theorem}
\providecommand{\intersect}{\ensuremath{\cap}}
\providecommand{\Intersect}{\ensuremath{\bigcap}}
\providecommand{\union}{\ensuremath{\cup}}
\providecommand{\Union}{\ensuremath{\displaystyle\bigcup}}
\providecommand{\Z}{\ensuremath{\mathbb{Z}}}
\providecommand{\N}{\ensuremath{\mathbb{N}}}
\providecommand{\R}{\ensuremath{\mathbb{R}}}
\providecommand{\disUnion}{\ensuremath{\bigsqcup}}
\providecommand{\disunion}{\ensuremath{\sqcup}}
\providecommand{\diam}{\ensuremath{\text{diam}}}
\providecommand{\weak}{\ensuremath{\text{weak}}}
\providecommand{\orbit}{\ensuremath{\text{orbit}}}
\title{Topological Speedups}
\date{\today}
\author{Drew D. Ash}
\begin{document}
\begin{abstract}
Given a dynamical system $(X,T)$ one can define a speedup of $(X,T)$ as another dynamical system conjugate to $S:X\rightarrow X$ where $S(x)=T^{p(x)}(x)$ for some function $p:X\rightarrow\mathbb{Z}^{+}$. In $1985$ Arnoux, Ornstein, and Weiss showed that any aperiodic, not necessarily ergodic, measure preserving system is isomorphic to a speedup of any ergodic measure preserving system. In this paper we study speedups in the topological category. Specifically, we consider minimal homeomorphisms on Cantor spaces. Our main theorem gives conditions on when one such system is a speedup of another. Furthermore, the main theorem serves as a topological analogue of the Arnoux, Ornstein, and Weiss speedup theorem, as well as a ``one-sided" orbit equivalence theorem.
\end{abstract}
\maketitle
\section{Introduction}
In this paper we characterize, in the topological setting, when one minimal Cantor system is a speedup of another. This theorem builds upon two different theorems in dynamics: one theorem from the measure theoretic category, the other from the topological category. Our main theorem is a topological analogue of the speedup theorem of Arnoux, Ornstein, and Weiss \cite{AOW}. Their theorem shows that the realization of a measure preserving system as a speedup of another is very general, however there are restrictions that arise in the topological category. The form of our characterization is very similar to the remarkable theorem of Giordano, Putnam, and Skau \cite[Theorem 2.2]{GPS} in that both theorems the dynamical relations are characterized by associated ordered groups or associated simplices of invariant measures. Whereas in \cite{GPS} they have bijective morphisms from one object onto the other, in our characterization theorem we get surjective and injective morphisms respectively. Furthermore, through the similarity of these theorems we can relate topological speedups to topological orbit equivalence. For example, given a pair of minimal Cantor systems, both of which are uniquely ergodic, if one or both systems is a speedup of the other then the two systems are orbit equivalent. 

These results follow in a long line of results coming from several different research areas of dynamics. The first, and perhaps most general, is that of finding topological analogues for results stemming from ergodic theory. One example, which we will mention a few times throughout this paper, is the topological analogue to the classical ergodic theory result of Dye~\cite{Dye}. Recall Dye's theorem says that any two ergodic transformations on non-atomic Lebesgue probability spaces are orbit equivalent. Over $35$ years later Giordano, Putnam, and Skau gave a complete characterization of when two minimal Cantor systems are orbit equivalent in the topological category. Unlike in the measure theoretic category not all minimal Cantor systems are orbit equivalent in the topological category 

Another line of research we follow is that of speedups themselves. Speedups have mostly been studied in the measurable category. By a \emph{speedup} of a fixed aperiodic measure preserving transformation $(X,\mathscr{B},\mu,T)$ we mean an automorphism of the form  $S(x)=T^{p(x)}(x)$, $p:X\rightarrow\Z^{+}$. One of the earliest people to study speedups-though they were not called this until later- was Neveu in $1969$. He had two papers \cite{Neveu1},\cite{Neveu2}; the latter would eventually give restrictions on what systems can be speedup to each other assuming integrability of $p$. The first major result, after Neveu, came in $1985$ with Arnoux, Ornstein, and Weiss, when they showed: for any ergodic measure preserving transformation $(X,\mathscr{B},\mu,T)$ and any aperiodic, not necessarily ergodic, $(Y,\mathscr{C},\nu, S)$ there is a $\mathscr{B}-$measurable function $p:X\rightarrow\Z^{+}$ such that $\bar{S}(x)=T^{p(x)}(x)$ is invertible $\mu$-a.e. and $(X,\mathscr{B},\mu,\bar{S})$ is isomorphic to $(Y,\mathscr{C},\nu, S)$. Finding a topological analogue to this theorem was the inspiration and impetus for this paper. Interest in measure theoretic speedups has been rekindled as evidenced by the papers by \cite{BBF}, \cite{JM}.

The final line of research our paper follows is that of topological orbit equivalence. Recall that $(X,T)$ and $(Y,S)$ are orbit equivalent if there exists a space isomorphism $F:X\rightarrow Y$ such that for every $x\in X$, $F(\orbit_{T}(x))=\orbit_{S}(F(x))$. Again Dye's theorem says that in the measurable category any two ergodic transformations on non-atomic Lebesgue probability spaces are orbit equivalent. This is not the case in the topological category. In $1995$ Giordano, Putnam, and Skau completely characterized orbit equivalence in the topological category. In doing so, they introduced two new orbit equivalence invariants namely: the dimension group, and having the simplices of invariants measures be affinely isomorphic via a space homeomorphism. We restate their characterization theorem here:

\begin{theorem}\textbf{\cite[Theorem $2.2$]{GPS}:}\label{GPS Theorem} Let $(X_{i},T_{i})$ be Cantor systems $(i=1,2)$. The following are equivalent:
\begin{enumerate}
\item $(X_{1},T_{1})$ and $(X_{2},T_{2})$ are orbit equivalent.
\item The dimension groups $K^{0}(X_{i},T_{i})/Inf(K^{0}(X_{i},T_{i})),\, i=1,2$, are order isomorphic by a map preserving the distinguished order units.
\item There exits a homeomorphism $F:X_{1}\rightarrow X_{2}$ carrying the $T_{1}-$invariant probability measures onto the $T_{2}-$invariant probability measures.
\end{enumerate}
\end{theorem}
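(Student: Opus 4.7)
The plan is to exploit the Bratteli-Vershik machinery of Herman, Putnam, and Skau, which presents every minimal Cantor system $(X_i,T_i)$ as the Vershik map on the infinite path space of an ordered Bratteli diagram and realizes $K^0(X_i,T_i)$ as the inductive limit of the incidence matrices along the diagram. Under this identification the simplex of $T_i$-invariant probability measures is naturally affinely homeomorphic to the state space of $K^0(X_i,T_i)$, and $Inf(K^0(X_i,T_i))$ is exactly the intersection of the kernels of all states; consequently the state space of the quotient $K^0(X_i,T_i)/Inf(K^0(X_i,T_i))$ is again the invariant measure simplex. With this dictionary in place, the equivalence of (2) and (3) reduces to a duality argument: an order isomorphism of the quotient dimension groups preserving order units dualizes to an affine homeomorphism of Choquet simplices, and conversely the simplex together with the order unit recovers the ordered quotient via its affine function representation.

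For (1) $\Rightarrow$ (3), given an orbit equivalence $F:X_1\to X_2$ with associated orbit cocycles $n,m:X_i\to\Z$ satisfying $F\circ T_1(x)=T_2^{n(x)}\circ F(x)$, I would show that $F_*$ sends $T_1$-invariant probability measures to $T_2$-invariant ones. The argument is combinatorial: on any clopen set one builds a Kakutani-Rokhlin tower using $T_1$, transports it through $F$, and reconciles it with a $T_2$-tower, using minimality to bound return times; since an invariant measure on a Cantor system is determined by its values on clopen sets, this forces $F_*\mu$ to be $T_2$-invariant.

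The heart of the theorem is the direction (3) $\Rightarrow$ (1). Here one fixes Bratteli-Vershik models for both systems and, guided by the prescribed homeomorphism of invariant measure simplices, performs a back-and-forth refinement of the two ordered diagrams using telescoping and splitting operations. At each finite level the refined Kakutani-Rokhlin partitions of $X_1$ and $X_2$ should be paired so that the pairing is piecewise a power of the respective $T_i$; the measure-simplex isomorphism is what controls the combinatorics of this pairing uniformly over levels. Passing to the inverse limit yields a homeomorphism $F:X_1\to X_2$ carrying each $T_1$-\orbit onto a $T_2$-\orbit. The principal obstacle is \emph{convergence of the matching}: one must arrange successive pairings to be compatible so that the sequence of piecewise-power maps defines a genuine homeomorphism in the limit rather than merely a measurable bijection. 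This bookkeeping—ensuring the ordered diagram operations can be performed in lockstep on the two sides while respecting the simplex isomorphism—is the technical core of the Giordano-Putnam-Skau proof, and it is exactly this convergence that I would expect to consume the bulk of the work.
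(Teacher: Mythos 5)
First, a point of comparison: the paper does not prove this statement at all --- it is quoted verbatim from Giordano--Putnam--Skau as background (it reappears later as Theorem~\ref{GPS} and is only ever \emph{used}), so there is no internal proof to measure your outline against. Judged on its own terms, your proposal contains one genuine gap.

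The gap is in your treatment of $(2)\Leftrightarrow(3)$. Duality between unital ordered groups and their state spaces does give that an order-unit-preserving order isomorphism of $K^{0}(X_{i},T_{i})/Inf(K^{0}(X_{i},T_{i}))$ induces an affine homeomorphism of the state spaces, hence of the simplices $M(X_{i},T_{i})$. But condition $(3)$ asks for strictly more: a homeomorphism $F:X_{1}\rightarrow X_{2}$ of the underlying Cantor sets whose push-forward realizes that affine bijection. An abstract affine isomorphism of Choquet simplices does not come equipped with, and does not in general determine, a point map; manufacturing $F$ from the group isomorphism is essentially the entire content of the hard direction of the theorem (in the original it is handled by $C^{*}$-algebraic and AF-equivalence-relation machinery, and in the Glasner--Weiss approach by a back-and-forth exhaustion of clopen sets of equal measure --- the very kind of argument you reserve for $(3)\Rightarrow(1)$). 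So as organized, your proof has two hard constructions, one of which ($(2)\Rightarrow(3)$) you have waved off as formal duality; the standard remedy is to prove the implications cyclically, $(1)\Rightarrow(3)\Rightarrow(2)\Rightarrow(1)$, where $(3)\Rightarrow(2)$ really is the easy dual step (compose with $F$) and only one construction remains. Two smaller remarks: your $(1)\Rightarrow(3)$ is correct but can be done more directly --- the orbit cocycle of $F$ is Borel, so one decomposes any Borel set over its level sets and uses countable additivity, exactly as in Proposition~\ref{Simplex inequality} above, with no tower reconciliation needed --- and your sketch of $(3)\Rightarrow(1)$ is a fair description of the combinatorial route, with the convergence of the successive matchings correctly identified as the technical core.
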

Above $K^{0}(X_{i},T_{i})/Inf(K^{0}(X_{i},T_{i}))$ is the group of continuous functions from $X_{i}$ to the integers modulo the subgroup of functions which integrate to 0 against every $T_{i}$-invariant Borel probability measure.

We can view speedups through the lens of orbit equivalence by observing that if $(X_{2},T_{2})$ is a speedup of $(X_{1},T_{1})$ then there exists a homeomorphism $F:X_{1}\rightarrow X_{2}$ such that for every $x\in X_{1}$ we have
$$
F(\orbit_{T_{1}}^{+}(x))\supseteq\orbit_{T_{2}}^{+}(F(x)).
$$
Our main theorem, stated below, has a very similar form to Theorem~\ref{GPS Theorem} above.

\begin{main} Let $(X_{1},T_{1})$ and $(X_{2},T_{2})$ be minimal Cantor systems. The following are equivalent:
\begin{enumerate}
\item $(X_{2},T_{2})$ is a speedup of $(X_{1},T_{1})$
\item There exists
$$
\varphi:K^{0}(X_{2},T_{2})/Inf(K^{0}(X_{2},T_{2}))\twoheadrightarrow K^{0}(X_{1},T_{1})/Inf(K^{0}(X_{1},T_{1}))
$$
a surjective group homomorphism such that $\varphi(K^{0}(X_{2},T_{2})^{+})=K^{0}(X_{1},T_{1})^{+}$ and $\varphi$ preserves the distinguished order units.
\item There exists homeomorphism $F:X\rightarrow Y$, such that $F_{*}:M(X_{1},T_{1})\hookrightarrow M(X_{2},T_{2})$ is an injection.
\end{enumerate}
\end{main}
Here we can see the one-sided and reciprocal nature of our main theorem. Instead of having bijective morphims, as is the case in Giordano, Putnam, and Skau's result, we alternatively have either surjective or injective morphisms from one object to the other: surjective morphism preserving the order unit and taking one positive cone onto the other in the dimension group setting, and an injection, arising from a space homeomorphism, from one simplex of invariant measures to the other. In section $5$ we will prove our main theorem and what's more having a surjective morphism on the dimension groups induces an injective morphism on the simplices of invariant measures (or states associated to the dimension group); hence illustrating the reciprocal nature of speedups. Furthermore, as a consequence of both the Main Theorem and Theorem~\ref{GPS Theorem}, in the case of uniquely ergodic minimal Cantor system speedups characterize orbit equivalence. That is, given two uniquely ergodic minimal Cantor systems if one is a speedup of the other, then the systems are orbit equivalent. In section $6$ of the paper we will define speedup equivalence and show, that speedup equivalence and orbit equivalence are the same in systems with finitely many ergodic measures. Finally, we conclude the paper by presenting an example which shows that speedups can leave the orbit equivalence class of a given minimal transformation. 
%\section{Acknowledgements}
%I would like to thank the referee in advance for taking time to review this paper. Thank you. A few more sentences reserved for Nic and Laura.
\section{Preliminaries}
\subsection{Minimal Cantor systems}
As a general reference for dynamics we recommend: \cite{Walters},\cite{BS},\cite{Petersen}. Throughout this paper $X$ will always be taken to be a Cantor space, that is a compact, metrizable, perfect, zero-dimensional space. A \emph{Cantor system} will consist of a pair $(X,T)$ where $X$ is a Cantor space and $T:X\rightarrow X$ is a homeomorphism. In addition we will require that our homeomorphism be \emph{minimal}, by which we mean that every orbit is dense. Specifically, for every $x$ in $X$ we have that
$$
\overline{\mathcal{O}_{T}(x)}=\overline{\{T^{n}(x):x\in\Z\}}=X
$$
where $\mathcal{O}_{T}(x)$ denotes the orbit of the point $x$. We call such systems $(X,T)$ \emph{minimal Cantor systems}. It is well-known (see \cite{Walters}) that we can replace the density of all full orbits with the density of just the forward orbits. Thus, a homeomorphism $T$ is minimal if for every $x\in X$ we have that
$$
\overline{\mathcal{O}_{T}^{+}(x)}=\overline{\{T^{n}(x):n\in\N\}}=X
$$
where $\mathcal{O}_{T}^{+}(x)$ denotes the forward orbit of the point $x$. 

A helpful example which will be referenced throughout the paper is the dyadic odometer. Here we take $X=\{0,1\}^{\N}$, where $\{0,1\}$ is endowed with the discrete topology, making $X$ into a Cantor space. We define $T$ to be $``+1$ and carry to the right", so for example
$$
.000\dots\overset{T}{\mapsto} .100\dots\overset{T}{\mapsto}.010\dots\overset{T}{\mapsto}.110\dots\overset{T}{\mapsto}001\dots.
$$
Formally, $T$ can be defined as
$$
T(x)(i)
\begin{cases}
0 & \text{ if } i<n \\
1 & \text{ if } i=n \\
x(i) & \text{ if } i>n
\end{cases}
$$
where $n$ is the least positive integer such that $x(n)=0$, and $T$ maps the constantly $1$ sequence to the constantly $0$ sequence. The triadic odometer, which is mentioned later in the paper, is similarly defined on $\{0,1,2\}^{\N}$.

Minimal Cantor systems exhibit a wonderful structure, namely the existence of a refining sequence of Kakutani-Rokhlin tower partitions. These tower partitions, defined below, were instrumental in relating minimal Cantor systems to Bratteli diagrams, and hence dimension groups, AF-Algebras, and many other beautiful results.
\begin{definition}
A \emph{\textbf{Kakutani-Rokhlin tower partition}} of a minimal Cantor system $(X,T)$ is a clopen partition $\mathcal{P}$ of $X$ of the form
$$
\mathcal{P}=\{T^{j}C_{k}:k\in V,\, 0\le j<h_{k}\}
$$

where $V$ is a finite set, $C_{k}$ is a clopen set, and $h_{k}$ is a positive integer. 
\end{definition}
By fixing a $k$ we may refer to a \emph{column} of the partition $\{T^{j}C_{k}:0\le j<h_{k}\}$, and $h_{k}$ is referred to the height of the column. The set $T^{j}C_{k}$ is the $j^{th}$ \emph{level} of the $k^{th}$ column. Furthermore, we refer to 
$$
C=\Union_{k\in V}C_{k}
$$
as the \emph{base} of the Kakutani-Rokhlin tower partition. A visualization of a Kakutani-Rokhlin tower partition  is provided below.
$$
\begin{tikzpicture}
\draw(0,0)--(1,0) node[right]{$C_{1}$};
\draw[->](.5,0)--(.5,1) node[midway,right]{$T$};
\draw(0,1)--(1,1);
\node at (.5,1.5) {$\vdots$};
\draw(0,2.0)--(1,2.0);
\draw(3,0)--(4,0) node[right]{$C_{2}$};
\draw[->](3.5,0)--(3.5,1) node[midway,right]{$T$};
\draw(3,1)--(4,1);
\node at (3.5,1.5) [] {$\vdots$};
\draw(3,2)--(4,2);
\draw[->](3.5,2)--(3.5,3) node[midway,right]{$T$};
\draw(3,3)--(4,3);
\node at (5.75,0) [] {$\dots$};
\draw(6.5,0)--(7.5,0) node[right]{$C_{k}$};
\draw[->](7,0)--(7,1) node[midway,right]{$T$};
\draw(6.5,1)--(7.5,1);
\node at (7,1.5) [] {$\vdots$};
\draw(6.5,2)--(7.5,2);
\draw[->](7,2)--(7,3) node[midway,right]{$T$};
\draw(6.5,3)--(7.5,3);
\draw[->](7,3)--(7,4) node[midway,right]{$T$};
\draw(6.5,4)--(7.5,4);
\draw[->](1.0,3) .. controls (-1,4) and (-2,-3) .. node[near start, sloped, above]{$T$} (3.5,-1);
\end{tikzpicture}
$$
Note $T$ maps the top of each column into the base, and in only special cases does the top of any column map onto the first level of that column. 

A nice property of these Kakutani-Rokhlin tower partitions is that they can have arbitrarily high columns heights and can refine any clopen partition the space. We summarize these properties in the following two propositions.
\begin{proposition}\label{K-R height}
Let $(X,T)$ be a minimal Cantor system, and $n\in\Z^{+}$ be given. There exists a Kakutani-Rokhlin tower partition of $X$,
$$
\{T^{j}(C_{i}):1\le i\le t,\, 0\le j< h_{i} \}
$$
such that for $i=1,2,\dots,t,\, h_{i}>n$.
\end{proposition}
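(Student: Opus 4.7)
The plan is to build the Kakutani--Rokhlin tower partition as the first-return partition over a sufficiently ``thin'' clopen base. The key observation is that if one can find a clopen set $Y\subseteq X$ such that $Y,TY,\dots,T^nY$ are pairwise disjoint, then every first return time to $Y$ exceeds $n$, and so every column of the associated first-return partition has height strictly greater than $n$.

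First I would produce such a $Y$. Pick any point $x_0\in X$. Because $(X,T)$ is a minimal homeomorphism of an infinite compact space, it is aperiodic, so the points $x_0,Tx_0,\dots,T^nx_0$ are distinct. Since $X$ is a Cantor space, I can choose pairwise disjoint clopen neighborhoods $V_0,V_1,\dots,V_n$ of these points, and then set
\[
Y \;=\; V_0\,\cap\, T^{-1}V_1\,\cap\,\cdots\,\cap\, T^{-n}V_n.
\]
This $Y$ is a clopen neighborhood of $x_0$, and by construction $T^jY\subseteq V_j$ for $0\le j\le n$, so $Y,TY,\dots,T^nY$ are pairwise disjoint.

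Next I would analyze the first return map to $Y$. By minimality every forward orbit meets $Y$, so the return time function $r_Y:Y\to\Z^+$ is defined everywhere; by compactness together with minimality (cover $X$ by $Y,T^{-1}Y,T^{-2}Y,\dots$ and extract a finite subcover), $r_Y$ is bounded above by some integer $N$. Since $Y$ is clopen, the set $\{y\in Y:r_Y(y)=h\}$ is clopen for each $h$, so $r_Y$ takes only finitely many values $h_1,\dots,h_t$, giving a clopen partition
\[
Y \;=\; C_1\disunion C_2\disunion\cdots\disunion C_t,\qquad C_i=\{y\in Y:r_Y(y)=h_i\}.
\]
By the disjointness built into $Y$, each $h_i>n$.

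Finally, I would verify that $\mathcal{P}=\{T^jC_i : 1\le i\le t,\ 0\le j<h_i\}$ is a clopen partition of $X$. The pieces are clopen, they are pairwise disjoint (two levels in the same column are disjoint by the definition of $r_Y$; levels in different columns are disjoint because levels belonging to the top of a column are the first returns from the base), and their union is all of $X$ (again by minimality plus the definition of first return time, every $x\in X$ can be written uniquely as $T^j y$ with $y\in C_i$ and $0\le j<h_i$). This gives the required Kakutani--Rokhlin partition with all column heights exceeding $n$. The only delicate step is producing the thin base $Y$; once aperiodicity (an easy consequence of minimality on an infinite Cantor space) is in hand, the rest is a standard first-return construction.
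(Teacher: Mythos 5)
Your proof is correct: the thin clopen base $Y$ with $Y,TY,\dots,T^nY$ pairwise disjoint (using aperiodicity, which follows from minimality on an infinite space) together with the first-return-time partition of $Y$ is exactly the canonical construction of a Kakutani--Rokhlin tower with all heights exceeding $n$. The paper itself states this proposition without proof, treating it as a standard fact from the literature on minimal Cantor systems, so there is nothing to contrast with; your argument supplies precisely the standard details (boundedness of the return time by compactness, clopenness of the level sets, and disjointness of the levels) that the paper omits.
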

\begin{proposition}\label{K-R refine}
Let $(X,T)$ be a minimal Cantor system, $\mathcal{Q}$ a clopen partition of $X$, and $\mathscr{P}$ a Kakutani-Rokhlin tower partition of $X$. Specifically,
$$
\mathscr{P}=\{T^{j}(C_{i}):1\le i\le t,\, 0\le j<h_{i}\}.
$$
Then we can refine $\mathscr{P}$ into $\mathscr{P}'$ such that $\mathscr{P}'$ refines $\mathcal{Q}$, and $\mathscr{P}'$ maintains its tower structure: that is
$$
\mathscr{P}'=\{T^{j}(C'_{i}):1\le i\le t',\, 0\le j<h'_{i} \}
$$
where $t'$ is the new number of columns and $h'_{i}$ is the new height of the $i^{th}$ column.
\end{proposition}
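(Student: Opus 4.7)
The plan is to refine $\mathscr{P}$ column by column, leaving the column heights unchanged and only subdividing each base $C_i$ according to the $\mathcal{Q}$-itinerary of length $h_i$ followed by points in that column. The key observation is that because $T$ is a homeomorphism and $\mathcal{Q}$ is a finite clopen partition of $X$, for every $0 \le j < h_i$ and every $Q \in \mathcal{Q}$ the set $C_i \cap T^{-j}(Q)$ is clopen. Fixing $i$ and $j$ and letting $Q$ range over $\mathcal{Q}$ gives a finite clopen partition of $C_i$; taking the common refinement of these $h_i$ partitions yields a finite clopen partition
\[
C_i = C_i^{(1)} \sqcup C_i^{(2)} \sqcup \cdots \sqcup C_i^{(n_i)}
\]
with the property that on each $C_i^{(\ell)}$ the atom of $\mathcal{Q}$ containing $T^j x$ is independent of the choice of $x \in C_i^{(\ell)}$, for every $0 \le j < h_i$. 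Equivalently, every level $T^j C_i^{(\ell)}$ is contained in a single atom of $\mathcal{Q}$.

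I would then set
\[
\mathscr{P}' = \{T^j C_i^{(\ell)} : 1 \le i \le t,\ 1 \le \ell \le n_i,\ 0 \le j < h_i\},
\]
and reindex the pairs $(i,\ell)$ by a single index running from $1$ to $t' := \sum_{i=1}^{t} n_i$, assigning to each new column the height $h'$ equal to the height $h_i$ of its parent. Because each $T^j C_i^{(\ell)}$ is clopen (as the image of a clopen set under the homeomorphism $T^j$) and the old levels $T^j C_i$ already partitioned $X$, the collection $\mathscr{P}'$ is a clopen partition of $X$ in Kakutani-Rokhlin tower form. By construction it refines $\mathscr{P}$ (since $T^j C_i^{(\ell)} \subseteq T^j C_i$) and refines $\mathcal{Q}$ (by the defining property of the $C_i^{(\ell)}$).

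There is no serious obstacle: the argument is essentially bookkeeping once one notes that $T^{-j}$ sends clopen sets to clopen sets and that only finitely many $\mathcal{Q}$-itineraries of length $h_i$ can be realized on $C_i$. The only small point to keep in mind is that the new heights $h'_i$ are inherited without change from the old ones, so if one simultaneously wants both refinement and very tall columns, one should first apply Proposition~\ref{K-R height} and only then apply this refinement argument.
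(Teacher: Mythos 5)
Your argument is correct and is the standard itinerary-refinement proof: subdivide each base $C_i$ by the common refinement of the clopen partitions $\{C_i\cap T^{-j}(Q):Q\in\mathcal{Q}\}$ for $0\le j<h_i$, so that every level of every new column lies in a single atom of $\mathcal{Q}$. The paper states this proposition without proof as a well-known fact, and your write-up supplies exactly the argument it implicitly relies on, with the bookkeeping (clopenness of $T^{-j}(Q)$, finiteness of the refinement, preservation of heights) handled correctly.
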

Putting the following definition and propositions together we get a fundamental theorem not only for this paper, but for the study of minimal Cantor systems in general.
\begin{theorem}\label{K-R}
Let $(X,T)$ be a minimal Cantor system and let $x\in X$. There exists a sequence of Kakutani-Rokhlin tower partitions $(\mathscr{P}(n))_{n\in\N}$ with
$$
\mathscr{P}(n):=\{T^{j}B_{i}(n):1\le i\le t(n),0\le j<h_{i}(n)\}
$$ 
satisfying
\begin{enumerate}
\item $\displaystyle\Intersect_{n\in\N}\Union_{1\le i\le t(n)}B_{i}(n)=\{x\}$ 
\item for every $n$ we have that $\mathscr{P}(n+1)$ is finer than $\mathscr{P}(n)$ i.e. $\mathscr{P}(n)\le\mathscr{P}(n+1)$ for every $n$.
\item $\displaystyle\Union_{n\in\N}\mathscr{P}(n)$ generates the topology of $X$.
\end{enumerate}
\end{theorem}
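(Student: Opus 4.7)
The plan is to construct the sequence $(\mathscr{P}(n))_{n\in\N}$ by induction on $n$, arranging at each stage that the base $B(n):=\bigcup_{1\le i\le t(n)}B_{i}(n)$ is a clopen neighborhood of $x$ sitting inside a shrinking neighborhood basis of $x$, and that $\mathscr{P}(n)$ refines both its predecessor $\mathscr{P}(n-1)$ and a prescribed finite piece of a countable clopen basis of the topology. Because $X$ is a Cantor space, I first fix a countable clopen basis $\{U_{n}\}_{n\in\N}$ for the topology of $X$ and a nested decreasing sequence of clopen neighborhoods $V_{1}\supseteq V_{2}\supseteq\cdots$ of $x$ with $\bigcap_{n}V_{n}=\{x\}$.

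The key auxiliary tool is the first-return partition to a clopen neighborhood $W$ of $x$: minimality of $T$ combined with compactness of $X$ forces the first-return time $r_{W}\colon W\to\Z^{+}$ to be continuous and bounded, and partitioning $W$ by the level sets of $r_{W}$ produces a Kakutani--Rokhlin tower partition of $X$ whose base equals $W$. I take $\mathscr{P}(1)$ to be the first-return partition to $V_{1}$, then use Proposition~\ref{K-R refine} to refine it so that it also refines $\{U_{1},X\setminus U_{1}\}$. For the inductive step, given $\mathscr{P}(n)$ with $x\in B(n)\subseteq V_{n}$, I pick a clopen neighborhood $W_{n+1}$ of $x$ with $W_{n+1}\subseteq V_{n+1}\cap B(n)$, form the first-return partition to $W_{n+1}$ (invoking Proposition~\ref{K-R height} to make its heights as large as needed), and then apply Proposition~\ref{K-R refine} to refine it to a Kakutani--Rokhlin tower partition $\mathscr{P}(n+1)$ that simultaneously refines $\mathscr{P}(n)$ and each of $\{U_{k},X\setminus U_{k}\}$ for $k\le n+1$. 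Property (2) is then built into the construction; property (3) follows because every $U_{k}$ is a union of atoms of $\mathscr{P}(k)$; and property (1) follows from the inclusions $\{x\}\subseteq B(n)\subseteq V_{n}$, whose intersection pinches down to $\{x\}$.

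The main obstacle I expect is ensuring that the refinement step does not destroy the property that the new base still contains $x$ and is still contained in $V_{n+1}$. The standard way around this is to perform the refinement entirely at the level of the base: split each column $\{T^{j}C_{i}:0\le j<h_{i}\}$ of the first-return partition by recording the $\mathscr{P}(n)$-itinerary $(a_{0},a_{1},\dots,a_{h_{i}-1})$ of each point as it climbs the column, and collect together those points with identical itinerary. Because $\mathscr{P}(n)$ has only finitely many atoms and each $h_{i}$ is finite, this yields a finite clopen partition of each $C_{i}$; since the itinerary is constant on each new sub-column, its $T$-translates form a Kakutani--Rokhlin tower refining $\mathscr{P}(n)$ whose base is still $W_{n+1}\subseteq V_{n+1}$ and still contains $x$. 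An analogous itinerary refinement (done once at the start of the construction of $\mathscr{P}(n+1)$, relative to the finite partition $\{U_{k},X\setminus U_{k}\}_{k\le n+1}$) absorbs the requirement of refining the basic clopen sets, completing the induction.
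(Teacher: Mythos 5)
Your construction is correct and is essentially the argument the paper intends: the paper states Theorem~\ref{K-R} without proof as a consequence of Propositions~\ref{K-R height} and~\ref{K-R refine}, and your first-return-time towers over a shrinking clopen neighborhood basis of $x$, refined at each stage by itineraries so that the base is preserved, is exactly the standard construction those propositions encode. The itinerary-splitting observation you make to keep the base equal to $W_{n+1}$ is the right way to reconcile refinement with conditions (1) and (2), so there is no gap.
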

We will make extensive use of this theorem throughout the proof of the main result of this paper. 
\subsection{Invariant measures associated to minimal Cantor systems}
In this section we will review some standard facts about invariant measures associated to topological dynamical systems and fix notation. Then we will introduce the definition of a dynamical simplex, or $D$-Simplex, which is due to Heidi Dahl, and was inspired by, and extended, the notion of a good measure introduced by Ethan Akin in \cite{Akin}.

First recall that the Bogolioubov-Krylov theorem says that any continuous transformation of a compact metric space has an invariant Borel probability measure. Fix a minimal Cantor system $(X,T)$ and let $M(X)$ denote the collection of all Borel probability measures on $X$. We are interested in the measures in $M(X)$ which are $T$-invariant, and we denote the collection of all $T$-invariant Borel probability measures by $M(X,T)$, i.e.
$$
M(X,T)=\{\mu\in M(X):\mu(T^{-1}(A))=\mu(A)\text{ for every Borel subset $A$}\}
$$
Again by Bogolioubov-Krylov, $M(X,T)\neq\emptyset$. 

The set $M(X,T)$ has a very nice structure as it is a Choquet simplex with respect to the weak$^{*}$ topology; that is, $M(X,T)$ is a compact, convex subset of $M(X)$ in which every measure $\mu$ can be uniquely represented as an integral against a measure $\tau$ which is fully supported on the extreme points, denoted by $\partial_{e}(M(X,T))$. Furthermore recall that a measure $\mu$ is \emph{full or has full support} if $\mu$ gives positive measure to every non-empty open set. Also, we say that a measure $\mu$ is \emph{non-atomic} if $\mu$ gives measure $0$ to singletons. We are now ready to define a $D$-simplex. 
\begin{definition}[Dahl]
Let $K\subseteq M(X)$ be a Choquet simplex consisting of non-atomic probability measures with full support. We say that $K$ is a \emph{\textbf{dynamical simplex (abbreviated $D$-simplex)}} if it satisfies the following two conditions:
\begin{enumerate}
\item For clopen subsets $A$ and $B$ of $X$ with $\mu(A)<\mu(B)$ for all $\mu\in K$, there exists a clopen subset $B_{1}\subseteq B$ such that $\mu(A)=\mu(B_{1})$ for all $\mu\in K$ (this is known as the subset condition).
\item If $\mu,\nu\in\partial_{e}K,\, \mu\neq\nu$, then $\mu$ and $\nu$ are mutually singular, i.e. there exists a measurable set $A\subseteq X$ such that $\mu(A)=1$ and $\nu(A)=0$.
\end{enumerate}
\end{definition}
It is well known that for any minimal Cantor system $(X,T)$, $M(X,T)$ is a Choquet simplex whose extreme points are mutually singular, see \cite[Chapter $6$]{Walters}. The fact that all measures are non-atomic and full both follow from $X$ being uncountable coupled with $T$ being a minimal transformation. Showing $M(X,T)$ is actually a $D$-simplex follows immediately from a proof of Lemma $2.5$ from Glasner and Weiss \cite{Glasner-Weiss}. From this we have the following theorem.
\begin{theorem}
Let $(X,T)$ be a minimal Cantor system. The set $M(X,T)$ is a $D$-simplex.
\end{theorem}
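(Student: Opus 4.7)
The plan is to verify the defining conditions of a $D$-simplex one at a time. As the author indicates, the facts that $M(X,T)$ is a non-empty Choquet simplex in the weak$^*$ topology and that its extreme points (the ergodic measures) are pairwise mutually singular are classical for any homeomorphism of a compact metric space, so I would simply cite \cite{Walters}. The two remaining ingredients specific to the minimal Cantor setting are (a) non-atomicity and full support of every $\mu \in M(X,T)$, and (b) the subset condition.

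For (a), I would argue directly from minimality. If $\mu(\{x\}) = c > 0$, then $T$-invariance gives $\mu(\{T^{n}x\}) = c$ for every $n \in \Z$; since minimality on the Cantor (hence infinite) space $X$ forces the orbit of $x$ to be infinite, this yields infinitely many disjoint atoms of the same positive measure, contradicting $\mu(X)=1$. For full support, if a non-empty open $U$ had $\mu(U)=0$, then by minimality $X = \bigcup_{n\in\Z}T^{-n}U$, and $T$-invariance would give $\mu(X)=0$, again a contradiction.

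The heart of the proof, and the main obstacle, is the subset condition. Given clopen $A,B \subseteq X$ with $\mu(A) < \mu(B)$ for every $\mu \in M(X,T)$, I must exhibit a clopen $B_1 \subseteq B$ with $\mu(B_1) = \mu(A)$ for all such $\mu$. My approach is Kakutani--Rokhlin refinement. Weak$^*$ compactness of $M(X,T)$ together with continuity of $\mu \mapsto \mu(B) - \mu(A)$ on clopen sets yields a uniform $\delta > 0$ with $\mu(B) - \mu(A) \geq \delta$ for every $\mu$. Using Propositions~\ref{K-R height} and \ref{K-R refine}, I choose a Kakutani--Rokhlin tower partition $\mathscr{P} = \{T^{j}C_{k} : 1 \leq k \leq t,\ 0 \leq j < h_{k}\}$ that simultaneously refines $\{A, A^{c}\} \vee \{B, B^{c}\}$ and has every column height larger than $1/\delta$. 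Each of $A$ and $B$ is then a union of full levels; let $a_{k}$ and $b_{k}$ be the number of levels of $A$ and $B$ occurring in column $k$. Because all levels in a fixed column have equal mass under any $T$-invariant $\mu$, one gets $\mu(A) = \sum_{k}a_{k}\mu(C_{k})$ and $\mu(B) = \sum_{k}b_{k}\mu(C_{k})$.

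The remaining, and most delicate, step is to show that after a suitable further refinement one can arrange $a_{k} \leq b_{k}$ for every column $k$, for then choosing any $a_{k}$ of the $b_{k}$ levels of $B$ in each column yields a clopen $B_{1} \subseteq B$ with $\mu(B_{1}) = \sum_{k}a_{k}\mu(C_{k}) = \mu(A)$ for every $\mu \in M(X,T)$ simultaneously. The obstruction is that a column could have $a_{k} > b_{k}$: if so, one can build an invariant probability measure weighting that column enough to contradict $\mu(A) < \mu(B)$, or, dually, refine $\mathscr{P}$ further and redistribute levels using the tall-column hypothesis and the uniform gap $\delta$. This is exactly the combinatorial argument inside the proof of Lemma~$2.5$ of Glasner--Weiss \cite{Glasner-Weiss}, which I would transcribe (or invoke) to complete the verification. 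With the subset condition established, all clauses of the $D$-simplex definition hold and the theorem follows.
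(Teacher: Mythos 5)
Your proposal is correct and follows essentially the same route as the paper, which likewise cites \cite{Walters} for the Choquet simplex structure and mutual singularity of ergodic measures, derives non-atomicity and full support from minimality, and attributes the subset condition to the proof of Lemma~2.5 of Glasner--Weiss \cite{Glasner-Weiss}. The only caveat is that the column-height threshold guaranteeing $a_{k}\le b_{k}$ should come from the uniform lower bound on Birkhoff averages of $\mathbbm{1}_{B}-\mathbbm{1}_{A}$ (as in Proposition~\ref{tall tower}), not simply from $1/\delta$, but since you defer that combinatorial step to \cite{Glasner-Weiss} this is an imprecision rather than a gap.
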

The fact that $M(X,T)$ is a $D$-simplex will play a role in the proof of the main theorem.
\subsection{Ordered groups and dimension groups} One of the more recent tools in the study of minimal Cantor systems, and in particular in the study of topological orbit equivalence is the dimension group. Dimensions groups were first defined by Elliot in \cite{Elliot} using inductive limits of groups. However, the definitions which follow are an equivalent, and more abstract way of defining dimension groups which is due to Effros, Handelman, and Shen \cite{EHS}.

Before we can define what a dimension group is we must first introduce partially ordered groups. A general reference for parially ordered Abelian groups is \cite{Goodearl}, for references specifically related to dynamics we refer the reader to \cite{HPS},\cite{GPS}, and for a summary see \cite{CANT}. 

In this paper we will deal exclusively countable Abelian groups.
\begin{definition} A \emph{\textbf{partially ordered group}} is a countable Abelain group $G$ together with a special subset denoted $G^{+}$, referred to as the \emph{\textbf{positive cone}}, satisfying the following:
\begin{enumerate}
\item $G^{+}+G^{+}\subseteq G^{+}$
\item $G^{+}-G^{+}=G$
\item $G^{+}\intersect(-G^{+})=\{0\}$
\end{enumerate}
\end{definition}
Since we are calling these groups partially ordered given $a,b\in G$ we will write
$$
a\le b\text{ if } b-a\in G^{+}
$$
and we can define a strict inequality, $a<b$ by requesting that $b-a\in G^{+}\backslash\{0\}$. We will further require that our partially ordered Abelian groups be \emph{unperforated} by which we mean: if $a\in G$ and $na\in G^{+}$ for some $n\in\Z^{+}$ then $a\in G^{+}$. We press on towards defining what a dimension group is with the final condition: the Riesz interpolation property.
\begin{definition}
A partially ordered group is said to satisfy the \emph{\textbf{Riesz interpolation property}} if given $a_{1},a_{2},b_{1},b_{2}\in G$ with $a_{i}\le b_{j}$ for $i,j=1,2$, then there exists $c\in G$ such that 
$$
a_{i}\le c\le b_{j} \text{ for }i,j=1,2.
$$
\end{definition}
Finally, we have enough background to define a dimension group.
\begin{definition}
A \emph{\textbf{dimension group}} is an unperforated ordered group $(G,G^{+})$ which satisfies the Riesz interpolation property.
\end{definition}
An example of a dimension group, which will appear multiple times in this paper, is $(\Z[\frac{1}{2}],\Z[\frac{1}{2}]^{+})$ where
$$
\Z\left[\frac{1}{2}\right]=\left\{\frac{a}{2^{b}}:a\in\Z,b\in\N\right\} \text{ and }
\Z\left[\frac{1}{2}\right]^{+}=\left\{x\in\Z\left[\frac{1}{2}\right]:x\ge 0\right\}.
$$
In fact this dimension group is the exact dimension group associated to the dyadic odometer. Furthermore, a theorem by Giordano, Putnam, and Skau, which we will give later in the paper, showed that nearly all dimension groups arise from minimal Cantor systems.

There are two other properties of dimension groups we must discuss before moving forward. The first being the notion of an order unit.
\begin{definition}
Let $(G,G^{+})$ be a partially ordered group, we call $u\in G^{+}$ an \emph{\textbf{order unit}} if for every $a\in G$ there exists an $n\in\N$ such that $a\le nu$. Furthermore, any dimension group with an order unit will be called a \emph{\textbf{unital dimension group.}}
\end{definition}
Note $1$ plays the role of an ordered unit in our example above, which makes $(\Z[\frac{1}{2}],\Z[\frac{1}{2}]^{+},\textbf{1})$ a unital dimension group.

Finally, when dealing with minimal Cantors systems we only encounter \emph{simple dimension groups}, defined below. Seeing as our groups are Abelian, simple does not refer to the group being simple, but rather posits that the order ideal structure is simple.
\begin{definition}
An \emph{\textbf{order ideal}} is a subgroup $J$ so that 
\begin{enumerate}
\item $J=J^{+}-J^{+}$ where $J^{+}=J\intersect G^{+}$
\item if $0\le a\le b\in J$, then $a\in J$
\end{enumerate}
and a dimension group is \emph{\textbf{simple}} if it has no non-trivial order ideals.
\end{definition}

From now on we will only concern ourself with simple dimension groups. There are many connections between dimension groups and minimal Cantor systems and we will highlight some of these connections later in the paper. We need another definition.
\begin{definition}
Let $G$ be a simple dimension group with a fixed order unit $u\in G^{+}\backslash\{0\}$. We say that a homomorphism $p:G\rightarrow\R$ is a \emph{\textbf{state}} if $p$ is positive (i.e. $p(G^{+})\subseteq[0,\infty))$ and $p(u)=1$.
\end{definition}
States play an important role in the order structure of these dimension groups. To see this, let $(G,G^{+},u)$ be a unital simple dimension group (i.e. $(G,G^{+})$ is a simple dimension group and $u$ is an order unit) and let $S_{u}(G)$ denote the collection of all states on $G$. It is known that states always exists and so $S_{u}(G)\neq\emptyset$. Now paraphrasing a result of Effros\cite[Cor. $4.2$]{Effros} we have that
$$
G^{+}=\{a\in G:p(a)>0 \text{ for all } p\in S_{u}(G)\}\union\{0\}
$$
This tells us that by knowing the states we know the order structure of $G$. Furthermore, we can make at least one connection with minimal Cantor systems, which we will make explicit once we have more notation, in that states on the dimension group correspond exactly to invariant measures for the minimal Cantor system associated to this dimension group. Hence there always exists at least one state, just as there always exists at least one invariant measure.

We now would like to single out special elements of any simple dimension group $(G,G^{+})$. First, fix $(G,G^{+})$ a simple unital dimension group with $u\in G^{+}\backslash\{0\}$ an ordered unit. We say that $a\in G$ is an \emph{infinitesimal} if $p(a)=0$ for every $p\in S_{u}(G)$. We will let $Inf(G)$ denote the collection of all infinitesimals of $G$ and we note that it is a subgroup of $G$. Furthermore, if we start with a dimension group $G$ and form the quotient group $G/Inf(G)$, the quotient has a natural order structure coming from $G$ in that $[a]>0$ if $a>0$. From this it can be seen that $G/Inf(G)$ becomes a dimension group in its own right and has no infintesimals other than $[0]$.
\subsection{Dimension groups and dynamical system}
In the section we will give a brief introduction to some basic definitions, notation, and theorems about dimension groups associated to minimal Cantor systems. For a more detailed and motivational exploration of these links we implore the reader to see \cite{GPS},\cite{HPS}. 

Given a minimal Cantor system $(X,T)$, let $C(X,\Z)$ denote the collection of all continuous $\Z$ valued functions on $X$. This is a countable Abelian group under addition. Furthermore, define
$$
K^{0}(X,T)=C(X,\Z)/\{f-f\circ T:f\in C(X,\Z)\}.
$$ 
We denote by $B_{T}=\{f-f\circ T:f\in C(X,\Z)\}$ and call it collection of coboundaries. Define the positive cone, the positive elements, to be
$$
K^{0}(X,T)^{+}=\{[f]:f\ge 0,\, f\in C(X,\Z)\}
$$ 
also let $\textbf{1}$ denote the constantly $1$ function on $X$. We now have the following theorem relating dimension groups arising from minimal Cantor systems.
\begin{theorem}\cite[Theorem $1.12$]{GPS}\label{DimGpReal}
Let $(X,T)$ be a minimal Cantor system. Then $K^{0}(X,T)$ with positive cone $K^{0}(X,T)^{+}$ is a simple, acyclic (i.e. $G\ncong\Z$) dimension group with (canonical) distinguished order unit $\textbf{1}$. Furthermore, if $(G,G^{+})$ is a simple, acyclic dimension group with distinguished order unit $u$, there exists a minimal Cantor system $(X,T)$ so that 
$$
(G,G^{+},u)\cong(K^{0}(X,T),K^{0}(X,T)^{+},\textbf{1})
$$
meaning that there exists an order isomorphism $\alpha:G\rightarrow K^{0}(X,T)$ so that $\alpha(u)=\textbf{1}$.
\end{theorem}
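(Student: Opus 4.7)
The plan is to split the theorem into its two halves. The first asserts that $(K^{0}(X,T), K^{0}(X,T)^{+}, \textbf{1})$ has the claimed structure for any minimal Cantor system, and I would verify each axiom by hand using minimality and the Kakutani-Rokhlin machinery of Theorem~\ref{K-R}. The second is a realization statement, which I would reduce to the Effros-Handelman-Shen representation of simple dimension groups by Bratteli diagrams together with the Herman-Putnam-Skau Vershik construction of minimal Cantor systems from those diagrams.

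For the forward half, the positive cone axioms $G^{+} + G^{+} \subseteq G^{+}$ and $G^{+} - G^{+} = G$ are immediate (any $f \in C(X,\Z)$ splits as $f^{+} - f^{-}$). The axiom $G^{+} \cap (-G^{+}) = \{0\}$ reduces to showing that a coboundary $h = f - f \circ T \geq 0$ must vanish: telescoping gives uniformly bounded partial sums $\sum_{i=0}^{n-1} h(T^{i} x) = f(x) - f(T^{n} x)$, yet if $h$ were positive on any nonempty clopen $U$, minimality would make the orbit of every point return to $U$ infinitely often and force divergence. A refinement of the same bounded-Birkhoff-sum argument handles unperforation. The Riesz interpolation property is the most delicate axiom; here I would refine a Kakutani-Rokhlin tower by Proposition~\ref{K-R refine} until representatives of the given $a_{i}, b_{j}$ are simultaneously constant on every level, and then interpolate integer values column by column, using that $\Z$ itself is an interpolation group. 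That $\textbf{1}$ is an order unit is immediate from $f \leq \|f\|_{\infty} \cdot \textbf{1}$. Simplicity follows from minimality: any nonzero $[f] \in K^{0}(X,T)^{+}$ has a representative positive on a nonempty clopen set, and finitely many $T$-iterates of that set cover $X$ by minimality, so some integer multiple of $[f]$ dominates $\textbf{1}$, forcing any order ideal containing $[f]$ to contain $\textbf{1}$. Acyclicity follows because $K^{0}(X,T)\cong\Z$ would leave only one state, hence a unique invariant measure taking integer values on clopen sets, which together with full support forces $X$ to have no nontrivial clopen decomposition, contradicting $X$ being Cantor.

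For the realization half, given a simple acyclic dimension group $(G, G^{+}, u)$ with order unit, I would first apply Effros-Handelman-Shen to write $G$ as the inductive limit of $(\Z^{n_{k}}, (\Z^{n_{k}})^{+})$ along positive integer matrices. This data is precisely a Bratteli diagram $B$, and simplicity of $G$ translates into the standard simplicity condition on $B$. Next I would invoke the Herman-Putnam-Skau construction of the Vershik map to produce a minimal Cantor system $(X_{B}, T_{B})$ on the path space of $B$. Finally I would compute $K^{0}(X_{B}, T_{B})$ by analyzing the clopen partitions indexed by finite paths in $B$ and the resulting coboundary relations, showing that the natural isomorphism sends $u$ to $\textbf{1}$ and identifies the positive cones.

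The main obstacle is the Riesz interpolation property. The other axioms either follow directly from definitions or from clean invocations of minimality and Birkhoff-type arguments, and the realization half is largely handled by existing correspondences. Interpolation, by contrast, requires selecting a single Kakutani-Rokhlin tower fine enough that four given cohomology classes admit simultaneously tower-constant representatives, and then carrying out an explicit arithmetic interpolation on each level. This is where Proposition~\ref{K-R refine} and the flexibility to choose tower heights in Proposition~\ref{K-R height} become essential, and where the topological setting genuinely bites: unlike in the measurable category, the tower constructions must be performed clopen and uniformly, not merely almost everywhere.
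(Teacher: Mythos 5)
The paper does not actually prove this statement: it is quoted verbatim as Theorem 1.12 of Giordano--Putnam--Skau, whose proof in turn rests on the Herman--Putnam--Skau Bratteli--Vershik correspondence and the Effros--Handelman--Shen representation theorem. Your outline is essentially the standard proof from that literature. The realization half is exactly the EHS-plus-Bratteli--Vershik route, with one caveat you should record: after telescoping the diagram to a simple one you must further arrange that it admits a proper ordering before the Vershik map exists and is a minimal homeomorphism, and the identification of $K^{0}$ of the resulting system with the dimension group of the diagram is itself a theorem of HPS rather than a routine computation. For the forward half, the elementary verifications (cone axioms, antisymmetry via bounded telescoping sums of a nonnegative coboundary, order unit, simplicity via minimality, acyclicity via the state taking values in $\frac{1}{m}\Z$ while $X$ admits arbitrarily large clopen partitions) are all sound. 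The one place your sketch leans on an unstated lemma is the pair unperforation/interpolation: both reduce to the characterization that $[f]\in K^{0}(X,T)^{+}$ if and only if, for some sufficiently fine Kakutani--Rokhlin partition on whose levels a representative of $[f]$ is constant, the sum of that representative over each column is nonnegative --- equivalently, that the refining towers of Theorem~\ref{K-R} exhibit $(K^{0}(X,T),K^{0}(X,T)^{+})$ as an inductive limit of the lattice-ordered groups $(\Z^{t(n)},(\Z^{t(n)})^{+})$. Once that single lemma is proved, unperforation and Riesz interpolation are inherited from $\Z^{t}$ for free, which is cleaner than running a separate ``refined Birkhoff-sum argument'' for unperforation and a column-by-column interpolation by hand; I would make that lemma the explicit centerpiece of the forward half rather than an implicit step. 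With that addition your plan is complete and matches the proof the cited sources give.
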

The use of these dimension groups has been used to completely classify minimal Cantor systems up to strong orbit equivalence and orbit equivalence, see \cite{GPS}. The dimension group we concern ourselves with in this paper are dimension groups modulo their infinitesimals. As mentioned previously, there is a lovely connection between states of a dimension group and invariant measures which we will make explicit now. We then can give a simple characterization of the dimension groups that will appear in this paper. First we present the following theorem.
\begin{theorem}\cite[Theorem $1.13$]{GPS}\label{measures-states}
Let $(X,T)$ be a minimal Cantor system. Then
\begin{enumerate}
\item Every $T$-invariant probability measure $\mu$ on $X$ induces a state $T(\mu)$ on $(K^{0}(X,T),K^{0}(X,T)^{+},\textbf{1})$ by $f\rightarrow\int{f}d\mu,\, f\in C(X,\Z)$.
\item The map $T$ is a bijective correspondence between the set of $T$-invariant probability measures on $X$ and the set of states on $(K^{0}(X,T),K^{0}(X,T)^{+},\textbf{1})$.
\end{enumerate}
\end{theorem}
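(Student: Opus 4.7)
The plan is to prove (1) directly from the definitions and then tackle the bijection in (2) by building an inverse map from states to measures.

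For part (1), given $\mu \in M(X,T)$, I would define $T(\mu)\colon K^{0}(X,T)\to\R$ by $T(\mu)([f]) = \int f\, d\mu$. The first thing to check is well-definedness: if $[f]=[g]$ then $f-g = h - h\circ T$ for some $h\in C(X,\Z)$, and $T$-invariance of $\mu$ forces $\int h\, d\mu = \int h\circ T\, d\mu$, hence $\int f\, d\mu = \int g\, d\mu$. The map is clearly additive. Positivity is immediate from $f\geq 0 \Rightarrow \int f\, d\mu \geq 0$, and normalization $T(\mu)(\mathbf{1}) = \mu(X) = 1$ verifies the order-unit condition, so $T(\mu)$ is a state.

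For part (2), I would construct the inverse map. Given a state $p$ on $K^{0}(X,T)$, the natural guess is to define a finitely additive set function $\nu_p$ on the algebra of clopen sets by $\nu_p(A) = p([\mathbf{1}_A])$. Well-definedness is automatic since $\mathbf{1}_A\in C(X,\Z)$; finite additivity follows from $\mathbf{1}_{A\sqcup B} = \mathbf{1}_A + \mathbf{1}_B$ and the homomorphism property of $p$; non-negativity follows from positivity of $p$; and $\nu_p(X)=p(\mathbf{1})=1$. Because $X$ is a Cantor space, the clopen algebra generates the Borel $\sigma$-algebra, and any finitely additive probability on a clopen algebra of a compact zero-dimensional space is automatically countably additive (a disjoint clopen cover of a clopen set must be finite by compactness), so $\nu_p$ extends uniquely to a Borel probability measure $\mu_p$. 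To verify $T$-invariance, note that for any clopen $A$, $\mathbf{1}_{T^{-1}A} = \mathbf{1}_A\circ T$, so $\mathbf{1}_A - \mathbf{1}_{T^{-1}A} = \mathbf{1}_A - \mathbf{1}_A\circ T \in B_T$, hence $p([\mathbf{1}_{T^{-1}A}]) = p([\mathbf{1}_A])$, which gives $\mu_p(T^{-1}A) = \mu_p(A)$ on clopens, and thus on all Borel sets by the extension.

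Finally I would show the two maps are mutually inverse. For $\mu\mapsto T(\mu)\mapsto \mu_{T(\mu)}$: on any clopen $A$, $\mu_{T(\mu)}(A) = T(\mu)([\mathbf{1}_A]) = \int \mathbf{1}_A\, d\mu = \mu(A)$, and equality on clopens forces equality as Borel measures. For $p\mapsto \mu_p \mapsto T(\mu_p)$: the two states agree on classes $[\mathbf{1}_A]$ for $A$ clopen by construction, and since every $f\in C(X,\Z)$ is a finite $\Z$-linear combination $\sum n_i \mathbf{1}_{A_i}$ of clopen indicators (using that $f$ takes finitely many integer values and each level set is clopen), linearity gives agreement on all of $K^{0}(X,T)$. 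The main subtlety — and the only step that requires any thought beyond bookkeeping — is the passage from finite additivity on clopens to a countably additive Borel measure; the compactness argument above handles it cleanly, but one could alternatively invoke the Riesz representation theorem by first extending $p$ to a positive linear functional on $C(X,\R)$ via uniform approximation of continuous real functions by locally constant integer-valued functions scaled by $1/n$.
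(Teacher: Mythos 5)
Your proposal is correct. The paper offers no proof of this statement --- it is quoted verbatim as Theorem 1.13 of Giordano--Putnam--Skau and used as a black box --- so there is nothing internal to compare against; your argument is the standard self-contained one. All the key points are in order: well-definedness of $[f]\mapsto\int f\,d\mu$ via vanishing of coboundary integrals, the passage from a state $p$ to a finitely additive set function $A\mapsto p([\mathbf{1}_A])$ on the clopen algebra, the compactness observation that a countable disjoint clopen cover of a clopen set is finite (so finite additivity already gives a premeasure and Carath\'eodory extension applies), $T$-invariance on clopens propagating to all Borel sets by uniqueness of the extension, and the mutual-inverse check using that every $f\in C(X,\Z)$ is a finite integer combination of clopen indicators.
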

One can verify that this theorem still holds true on $K^{0}(X,T)/Inf(K^{0}(X,T)$. We now have seen states arise as integration against an invariant measure, hence let $Z_{T}=\{f\in C(X,\Z):\int{f}d\mu=0,\, \mu\in M(X,T)\}$, we then have
\begin{center}
$Inf(K^{0}(X,T))=Z_{T}/B_{T}=\{f\in C(X,\Z):\int{f}d\mu=0,\,\mu\in M(X,T)\}/B_{T}$.
\end{center}
Thus,
\begin{center}
$K^{0}(X,T)/Inf(K^{0}(X,T))\cong C(X,\Z)/Z_{T}$
\end{center}
and the order unit $\textbf{1}$ is preserved when $C(X,\Z)/Z_{T}$ is endowed with the induced order of $[f]\ge 0$ if $f\ge 0$ in $C(X,\Z)$.
\section{Speedups}
In this section we will define what we mean by a speedup of a minimal Cantor system $(X,T)$. Furthermore, we explore some of its basic properties which will lead up to the main theorem of the paper. First we define what a speedup is.
\begin{definition} Let $(X_{1},T_{1})$ and $(X_{2},T_{2})$ be minimal Cantor systems. We say $(X_{2},T_{2})$ is a \emph{\textbf{speedup}} of $(X_{1},T_{1})$ if $(X_{2},T_{2})$ is conjugate to $(X,S)$ where $S$ is a minimal homeomorphism of $X$ defined by
$$
S(x)=T_{1}^{p(x)}(x)
$$
where $p:X\rightarrow\Z^{+}$.
\end{definition}
For example if $(X,T)$ is the dyadic odometer, then $(X,T^{3})$ would constitute a speedup of $(X,T)$ as it is again a minimal Cantor system. Now we would like to point out that our definition of speedup is a bit more general in that any minimal Cantor system which is conjugate to $(X,T^{3})$ is also considered to be a speedup of $(X,T)$. We remark that $(X,T^{2})$, or anything conjugate to $(X,T^{2})$, cannot be a speedup of $(X,T)$ as $T^{2}$ is not minimal.

In the paper by Arnoux, Ornstein, and Weiss \cite{AOW}, $p$ is a measurable map. In the topological category we make the observation that if $T^{p(\cdot)}$ is to be continuous then $p$ must be lower semicontinuous.
\begin{proposition} Let $p:X\rightarrow\mathbb{Z}^{+}$ and suppose that $T^{p(x)}(x)=S(x)$ is a minimal Cantor system, then $p$ is lower semicontinuous, hence a Borel map.
\end{proposition}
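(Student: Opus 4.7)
The plan is to prove lower semicontinuity directly from the definition and then derive Borel measurability as a free consequence. Since $p$ takes values in $\mathbb{Z}^{+}$, proving lower semicontinuity at $x_{0}\in X$ amounts to exhibiting a neighborhood $U$ of $x_{0}$ on which $p(y)\ge p(x_{0})$ for all $y\in U$. Equivalently, for each $n\in\mathbb{Z}^{+}$, the superlevel set $\{x\in X:p(x)\ge n\}$ should be open.

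First I would assume, for contradiction, that $p$ fails to be lower semicontinuous at some point $x_{0}\in X$. Set $n=p(x_{0})$. Then every neighborhood of $x_{0}$ contains a point $y$ with $p(y)<n$, so I can extract a sequence $y_{i}\to x_{0}$ with $p(y_{i})\in\{1,2,\dots,n-1\}$ for all $i$. Since this is a finite set of integers, the pigeonhole principle lets me pass to a subsequence along which $p(y_{i})$ is constantly equal to some fixed integer $k$ with $1\le k<n$.

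The heart of the argument is the collision of two continuities. On the one hand, $S$ is continuous by hypothesis, so $S(y_{i})\to S(x_{0})=T^{p(x_{0})}(x_{0})=T^{n}(x_{0})$. On the other hand, $S(y_{i})=T^{p(y_{i})}(y_{i})=T^{k}(y_{i})$ along the chosen subsequence, and $T^{k}$ is continuous, so $S(y_{i})\to T^{k}(x_{0})$. Hence $T^{n}(x_{0})=T^{k}(x_{0})$, which gives $T^{n-k}(x_{0})=x_{0}$ with $n-k\ge 1$. Thus $x_{0}$ is a periodic point of $T$. But $(X,T)$ is minimal on a Cantor space, which is infinite, so $T$ has no periodic points; this contradiction establishes lower semicontinuity.

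The Borel conclusion is then automatic: lower semicontinuity says each set $\{p\ge n\}$ is open, so each fiber $\{p=n\}=\{p\ge n\}\setminus\{p\ge n+1\}$ is the difference of two open sets, hence Borel, and an integer-valued map with Borel fibers is Borel measurable. There is no real obstacle here; the only subtle point is remembering that minimality of $T$ on the infinite space $X$ forbids periodic points, which is exactly what converts the ``two-limit'' argument into a contradiction.
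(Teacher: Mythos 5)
Your proof is correct and is essentially the paper's argument in contrapositive form: both hinge on computing $\lim S(y_i)$ two ways (via continuity of $S$ and of a fixed power $T^k$), invoking uniqueness of limits, and then using aperiodicity of the minimal homeomorphism $T$ on the infinite space $X$ to force the two exponents to agree. The paper packages this as ``each fiber $p^{-1}(\{n\})$ is closed, so each sublevel set is a finite union of closed sets,'' while you run a direct contradiction with a pigeonhole-constant subsequence, but the mathematical content is the same.
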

\begin{proof}
First, we show that for every $n\in\Z^{+}$ we have
$$
p^{-1}(\{n\})\text{ is closed.}
$$
Let $n\in\mathbb{Z}^{+}$, $\{x_{m}\}_{m\ge 1}\subseteq p^{-1}(\{n\})$, and $x\in X$ such that $x_{m}\rightarrow x$; since both $S$ and $T^{n}$ are continuous, we have that
$$
S(x_{m})\rightarrow S(x)\text{ and } T^{n}(x_{m})\rightarrow T^{n}(x).
$$
Since for every $m$, $S(x_{m})=T^{n}(x_{m})$ and by uniqueness of limits we have that
$$
S(x)=T^{n}(x).
$$
We may conclude $p(x)=n$ as a result of $T$ being aperiodic by virtue of being a minimal transformation on a Cantor space.

Recall that a real valued function is \emph{lower semicontinuous} on a topological space if 
$$
\{x\in X:f(x)>\alpha\}
$$
is open for every real $\alpha$. Now let $\alpha\in\mathbb{R}$ be given. Observe that for any $\alpha$ there are only finitely many $n\in\Z^{+}$ such that $n\le\alpha$; thus,
$$
\{x:p(x)\le\alpha\}=\Union_{n\le\alpha}p^{-1}(\{n\})
$$
is a finite union of closed sets whence is closed. Consequently $\{x:p(x)>\alpha\}$ is open, therefore $p$ is lower semicontinuous as desired.
\end{proof}
\begin{remark}
If $p$ is continuous, then $p$ must be bounded as $X$ is compact. However, the converse is true as well. That is, if $p$ is bounded and defines a speedup $S$, then $p$ is continuous. This follows almost immediately from the previous proposition. In this case, where $p$ is bounded, finitely valued, or continuous and $S(x)=T^{p(x)}(x)$ is a speedup of $T$, we call these \emph{\textbf{bounded speedups.}} Bounded speedups are interesting in their own right. For example, by \cite{Neveu2} entropy restrictions arise in what systems can be speedups of others. However, bounded speedups are beyond the scope of this paper.
\end{remark}

One important aspect of speedups is how they interact with the invariant measures of the original system. The following proposition gives the relationship between the invariant measures of the original system and speedups of it. Furthermore, we have an example which shows the relationship below can be strict; thus showing that speedups can leave the conjugacy class of the original system. We'll discuss this more later in the paper. Before we prove this relationship it will be useful to be able to refer to following proposition.
\begin{proposition}\label{M(X,T)=M(X,T^{-1})}
	Suppose $(X,T)$ is a minimal Cantor system, then $M(X,T)=M(X,T^{-1})$.
\end{proposition}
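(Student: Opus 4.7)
The plan is to exploit the fact that $T$ is a homeomorphism, hence a Borel bijection with Borel inverse, so that the defining identity for $T$-invariance can be rewritten as the defining identity for $T^{-1}$-invariance via a substitution. There is essentially only one step, and no genuine obstacle; the proposition is really a bookkeeping observation about how the condition $\mu(T^{-1}A) = \mu(A)$ transforms.

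More precisely, I would first fix $\mu \in M(X,T)$, so that by definition $\mu(T^{-1}A) = \mu(A)$ for every Borel set $A \subseteq X$. Since $T$ is a homeomorphism, for any Borel set $B \subseteq X$ the image $T(B)$ is again Borel. Applying the invariance identity to $A := T(B)$ yields
\[
\mu(B) \;=\; \mu(T^{-1}(T(B))) \;=\; \mu(T(B)) \;=\; \mu((T^{-1})^{-1}(B)),
\]
which is precisely the statement that $\mu$ is $T^{-1}$-invariant. Hence $M(X,T) \subseteq M(X,T^{-1})$. The reverse inclusion is obtained by the same argument applied to $T^{-1}$ in place of $T$, using that $(T^{-1})^{-1} = T$. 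Combining the two inclusions gives $M(X,T) = M(X,T^{-1})$, which is the desired conclusion.

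I would note in passing that minimality of $T$ plays no role in this proposition; it holds for any homeomorphism of any measurable space, and it is the invertibility (not the dynamics) that is being used. The fact that $T^{-1}$ is itself a minimal homeomorphism of the Cantor set $X$, so that both sides are non-empty $D$-simplices in the sense of Dahl, is then a consistency check rather than part of the argument.
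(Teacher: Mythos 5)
Your proof is correct: the substitution $A := T(B)$ in the identity $\mu(T^{-1}A)=\mu(A)$ is exactly the standard argument, and your observation that minimality is irrelevant here is also accurate. The paper states this proposition without proof, so there is nothing to compare against; your argument is the one the author evidently had in mind.
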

We now show how speedups interact with the invariant measures of the original system.
\begin{proposition}\label{Simplex inequality} Let $(X,T)$ be a minimal Cantor system. If $(X,S)$ is a speedup of $(X,T)$ then $M(X,T)\subseteq M(X,S)$.
\end{proposition}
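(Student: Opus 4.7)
The plan is to exploit the decomposition of $X$ by the level sets of $p$. Set $X_n := p^{-1}(\{n\})$ for $n\in\Z^{+}$; by the lower semicontinuity result just established, each $X_n$ is closed (hence Borel), and $\{X_n\}_{n\geq 1}$ is a Borel partition of $X$ on which $S$ agrees with $T^n$.

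Fix $\mu\in M(X,T)$. Since $S$ is a homeomorphism (as $(X,S)$ is by hypothesis a minimal Cantor system), showing $S$-invariance of $\mu$ amounts to checking that $\mu(S(A))=\mu(A)$ for every Borel set $A\subseteq X$. I would decompose $A$ as the disjoint Borel union $A=\disUnion_{n\geq 1}(A\intersect X_n)$. The key observation is that since $S$ is a bijection, the images $S(A\intersect X_n)=T^n(A\intersect X_n)$ are themselves pairwise disjoint Borel sets. Countable additivity then yields
\[
\mu(S(A))=\sum_{n\geq 1}\mu\bigl(T^n(A\intersect X_n)\bigr),
\]
and applying $T$-invariance of $\mu$ term by term collapses this to $\sum_{n\geq 1}\mu(A\intersect X_n)=\mu(A)$.

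The only conceptual content is the recognition that $S$-invariance can be checked level-by-level: on each piece $X_n$ the map $S$ is literally $T^n$, so $T$-invariance handles the measure of each image, while the bijectivity of $S$ ensures that these images assemble without overlap, legitimizing the use of countable additivity. There is no genuine obstacle in this proposition; in particular, the companion proposition $M(X,T)=M(X,T^{-1})$ is not needed for this inclusion (it presumably enters elsewhere, where one must pass between forward and backward orbits).
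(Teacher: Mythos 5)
Your proof is correct and follows essentially the same route as the paper: decompose $A$ over the level sets $p^{-1}(\{n\})$, use injectivity of $S$ to keep the images disjoint, and apply $T$-invariance termwise. The only cosmetic difference is that the paper routes the conclusion through Proposition~\ref{M(X,T)=M(X,T^{-1})} (reducing to $\mu\in M(X,S^{-1})$), whereas you perform the equivalent substitution $A\mapsto S^{-1}(B)$ inline, which is the same elementary fact about invertible maps.
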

\begin{proof}
Let $p:X\rightarrow\mathbb{Z}^{+}$ be such that $S(x)=T^{p(x)}(x)$ is a minimal homeomorophism of $X$ and let $\mu\in M(X,T)$. Observe by Proposition~\ref{M(X,T)=M(X,T^{-1})} it suffices to simply show that $\mu\in M(X,S^{-1})$. Let $A\in\mathscr{B}(X),$ we then have
\begin{align*}
\mu(S(A))&=\mu\left(S\left(\displaystyle\bigsqcup_{n\in\mathbb{Z}^{+}}A\intersect p^{-1}(\{n\})\right)\right) \\
&=\mu\left(\displaystyle\bigsqcup_{n\in\mathbb{Z}^{+}}S(A\intersect p^{-1}(\{n\}))\right) \\
&=\mu\left(\disUnion_{n\in\Z^{+}}T^{n}(A\intersect p^{-1}(\{n\})) \right)\\
&=\displaystyle\sum_{n\in\mathbb{Z}^{+}}\mu(T^{n}(A\intersect p^{-1}(\{n\}))) \\
&=\displaystyle\sum_{n\in\mathbb{Z}^{+}}\mu(A\intersect p^{-1}(\{n\}))\text{ as $\mu\in M(X,T)$.} \\
&=\mu(A)
\end{align*}
\end{proof}
Notice that this proposition gives us an immediate restriction on when one system can be a speedup of another. For example, the previous proposition rules out the possibility of the triadic odometer being a speedup of the dyadic odometer, and vice versa, as both systems are uniquely ergodic and do not share the same clopen value set. The natural question to ask is: is this the only such restriction? We answer this and more with the statement of the main theorem of the paper.

\begin{theorem}\label{Main Theorem} Let $(X_{1},T_{1})$ and $(X_{2},T_{2})$ be minimal Cantor systems and let  
$$
G_{1}=C(X_{1},\mathbb{Z})/Z_{T_{1}} \text{ and } G_{2}=C(X_{2},\mathbb{Z})/Z_{T_{2}}.
$$
Where $Z_{T_{i}}=\{g\in C(X,\Z):\int{g}d\mu=0\, \forall\mu\in M(X_{i},T_{i})\}$.

The following are equivalent:
\begin{enumerate}
\item $(X_{2},T_{2})$ is a speedup of $(X_{1},T_{1})$.
\item There exists 
$$
\varphi:(G_{2},G_{2}^{+},\textbf{1})\twoheadrightarrow (G_{1},G_{1}^{+},\textbf{1})
$$
a surjective group homomorophism such that $\varphi(G_{2}^{+})=G_{1}^{+}$ and $\varphi(\textbf{1})=\textbf{1}$.
\item There exists homeomorphism $F:X\rightarrow Y$, such that $F_{*}:M(X_{1},T_{1})\hookrightarrow M(X_{2},T_{2})$ is an injection.
\end{enumerate}
\end{theorem}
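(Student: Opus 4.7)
The plan is to establish the cycle of implications $(1) \Rightarrow (3) \Rightarrow (2) \Rightarrow (1)$, with the first two being relatively short and the last being the technical heart of the proof.

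For $(1) \Rightarrow (3)$: Suppose $(X_2,T_2)$ is conjugate to $(X_1,S)$ via a homeomorphism $F:X_1\to X_2$ where $S(x)=T_1^{p(x)}(x)$. By Proposition~\ref{Simplex inequality}, $M(X_1,T_1)\subseteq M(X_1,S)$, and the pushforward $F_*$ is an affine homeomorphism from $M(X_1,S)$ onto $M(X_2,T_2)$. Restricting $F_*$ to the subsimplex $M(X_1,T_1)$ produces the required injection.

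For $(3) \Rightarrow (2)$: Given $F$ as in (3), I would define $\varphi:G_2\to G_1$ by $\varphi([g])=[g\circ F]$. Well-definedness uses the hypothesis on $F_*$: if $g\in Z_{T_2}$, then for each $\mu\in M(X_1,T_1)$ we have $\int(g\circ F)\,d\mu=\int g\,d(F_*\mu)=0$ since $F_*\mu\in M(X_2,T_2)$, so $g\circ F\in Z_{T_1}$. Surjectivity follows by sending $f\in C(X_1,\Z)$ to $f\circ F^{-1}\in C(X_2,\Z)$. Preservation of $\textbf{1}$ is immediate, and the equality $\varphi(G_2^+)=G_1^+$ follows from the fact that $[f]\ge 0$ in $G_1$ means $f$ has a nonnegative representative, whose preimage under $F$ is nonnegative in $C(X_2,\Z)$.

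For $(2) \Rightarrow (1)$, which is the main obstacle: I would construct a lower semicontinuous $p:X_1\to\Z^+$ with $S(x)=T_1^{p(x)}(x)$ minimal and conjugate to $T_2$, using the Kakutani-Rokhlin machinery of Theorem~\ref{K-R}. The plan is to build, inductively in $n$, refining sequences of K-R tower partitions $\mathscr{P}(n)$ of $(X_1,T_1)$ and $\mathscr{Q}(n)$ of $(X_2,T_2)$ whose column data are linked by $\varphi$. Concretely, each level $T_2^j C_k$ of $\mathscr{Q}(n)$ has characteristic function $\chi_{T_2^j C_k}\in G_2^+$; using the fact that $\varphi$ is a surjection onto $G_1^+$ preserving the order unit together with the subset (interpolation) conditions available in the $D$-simplex $M(X_1,T_1)$, I would exhibit clopen sets in $X_1$ sitting inside the columns of $\mathscr{P}(n)$ whose classes match $\varphi^{-1}$ of the $\mathscr{Q}(n)$-levels. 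The function $p$ is then defined on $\mathscr{P}(n)$-levels to be the $T_1$-return time from one ``transferred $T_2$-level'' to the next, and a matching homeomorphism $X_1\to X_2$ is read off from the aligned tower data.

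The hard part will be making these choices coherent across all $n$ so that the resulting $p$ is globally well-defined and the tower-level homeomorphisms assemble into an actual conjugacy. Because only surjectivity (rather than bijectivity) of $\varphi$ is available, one direction of the matching is forced while the other has flexibility, and careful bookkeeping is required to exhaust $X_2$ while respecting the $T_1$-orbit structure on $X_1$. This mirrors the Bratteli-Vershik/K-R constructions underlying Giordano-Putnam-Skau's orbit equivalence theorem, but adapted to deliver a one-sided dynamical relation; minimality of $S$ and the lower semicontinuity of $p$ will follow from refinement of the towers and the finite-column structure at each stage.
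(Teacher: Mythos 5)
Your two easy implications are fine, and in fact are routed more directly than the paper's: the paper proves $(1)\Rightarrow(2)$ by passing coboundary classes through the conjugacy and using $Z_{S}\subseteq Z_{T_{1}}$, and then gets $(2)\Rightarrow(3)$ only by combining a first-isomorphism theorem for ordered groups with Proposition~\ref{3.9} and an appeal to \cite[Thm.~2.2]{GPS}; your $(1)\Rightarrow(3)$ (restrict $F_{*}$ to the subsimplex $M(X_{1},T_{1})\subseteq M(X_{1},S)$) and $(3)\Rightarrow(2)$ (pullback $[g]\mapsto[g\circ F]$, well-defined because $F_{*}\mu\in M(X_{2},T_{2})$) are correct and cheaper. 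The problem is the hard implication, where your sketch has two genuine gaps.

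First, you start $(2)\Rightarrow(1)$ from the dimension-group surjection alone and propose to ``exhibit clopen sets in $X_{1}$ whose classes match $\varphi^{-1}$ of the $\mathscr{Q}(n)$-levels.'' Nothing in your outline produces those clopen sets: $\varphi$ only hands you elements of $G_{1}^{+}$, and realizing a positive class below the order unit as the class of an indicator function of a clopen set (coherently across a whole refining sequence of partitions, and compatibly with the tower structure) is a nontrivial realization step. The paper sidesteps it by first converting $(2)$ into $(3)$ via \cite[Thm.~2.2]{GPS}, so that the hard construction begins with an actual homeomorphism $F:X_{1}\rightarrow X_{2}$; the transferred levels are then literally $F^{-1}(T_{2}^{j}B_{i}(n))$, adjusted using the $D$-simplex subset condition so that all levels in a column have equal measure for every $\mu\in M(X_{1},T_{1})$. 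You either need to supply that realization argument or reroute through $(3)$ as the paper does.

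Second, and more seriously, defining $p$ as ``the $T_{1}$-return time from one transferred level to the next'' does not work. The first-hitting map of a clopen set $B$ restricted to a disjoint clopen set $A$ is in general neither injective nor surjective onto $B$, so it is not a candidate for the level-to-level homeomorphism; and the distinctive difficulty of speedups (as opposed to orbit equivalence) is that $p$ must be strictly positive, so you cannot simply move down a column when the target level lies below the source. This is exactly the content of the paper's Lemmas~\ref{pkey lemma}--\ref{Key Lemma}: one fixes a tall Kakutani--Rokhlin partition, matches $A$-levels injectively to $B$-levels within each column, uses first-return maps composed with powers of $T_{1}$ to handle targets lying below the source (keeping $p>0$), and then runs an intertwined induction applying the lemma alternately to $T_{1}$ and $T_{1}^{-1}$ to shrink the unmatched remainders to single points. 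Your proposal acknowledges that ``careful bookkeeping is required'' but does not contain the idea that makes positivity of $p$ achievable, which is the key lemma of the whole argument.
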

We will break up the proof of the main theorem into three sections, as each part of the proof requires a different set of lemmas. The main difficulty is proving $(3)$ implies $(1)$.
\subsection{Proof of $(1)$ implies $(2)$}
\begin{proof}
Since $(X_{2},T_{2})$ is a speedup of $(X_{1},T_{1})$, $(X_{2},T_{2})$ is conjugate, through a conjugacy $\mathpzc{k}$, to $(X,S)$ where $S:X\rightarrow X$ 
$$
S(x)=T_{1}^{p(x)}(x)
$$
and $p:X\rightarrow\Z^{+}$. Let $H_{1}=C(X,\Z)/Z_{S}$ and $(H_{1},H_{1}^{+},\textbf{1})$ be the unital dimension group associated to $(X,S)$. Hence, right composition of $\mathpzc{k}$ induces a unital dimension group isomorphism $\varphi_{1}:(H_{1},H_{1}^{+},\textbf{1})\rightarrow(G_{2},G_{2}^{+},\textbf{1})$. Define $\varphi_{2}:(G_{2},G_{2}^{+},\textbf{1})\rightarrow(G_{1},G_{1}^{+},\textbf{1})$ by
$$
\varphi_{2}([g]_{S})=[g]_{T_{1}}. 
$$
Observe, Proposition~\ref{Simplex inequality} gives us 
$$
Z_{S}\subseteq Z_{T_{1}}
$$
whence $\varphi_{2}$ is well defined. It is standard to check that $\varphi_{2}$ is a surjective group homomorphism(see the third isomorphism theorem for groups). Moreover, as right composition by $\mathpzc{k}$ doesn't affect positivity of elements, nor does it alter the order unit. One can verify
$$
\varphi_{2}(G_{2}^{+})=G_{1}^{+}\text{ and }\varphi_{2}(\textbf{1})=\textbf{1}.
$$
Therefore $\varphi=\varphi_{2}\circ\varphi_{1}$ is our desired group homomorphism.
\end{proof}
\subsection{Proof of $(2)$ implies $(3)$}
In order to proceed from $(2)$ to $(3)$ we would like to make use of \cite[Thm $2.2$]{GPS}. To do so we will need to extend the first isomorphism theorem from groups to partially ordered Abelian groups with interpolation. We recall for the reader one of the main theorems from \cite{GPS}.

\begin{theorem}\label{GPS}\textbf{\cite[Theorem $2.2$]{GPS}:} Let $(X_{i},T_{i})$ be Cantor systems $(i=1,2)$. The following are equivalent:
\begin{enumerate}[(i)]
\item $(X_{1},T_{1})$ and $(X_{2},T_{2})$ are orbit equivalent.
\item The dimension groups $K^{0}(X_{i},T_{i})/Inf(K^{0}(X_{i},T_{i})),\, i=1,2$, are order isomorphic by a map preserving the distinguished order units.
\item There exits a homeomorphism $F:X_{1}\rightarrow X_{2}$ carrying the $T_{1}-$invariant probability measures onto the $T_{2}-$invariant probability measures.
\end{enumerate}
\end{theorem}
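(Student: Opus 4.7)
My plan is to prove the cycle (i) $\Rightarrow$ (ii) $\Rightarrow$ (iii) $\Rightarrow$ (i). The first two implications are relatively direct consequences of machinery already at hand (pullback of continuous integer-valued functions and the state/measure correspondence of Theorem~\ref{measures-states}), while the passage (iii) $\Rightarrow$ (i), that is, constructing a genuine orbit equivalence from a homeomorphism that merely matches the invariant-measure simplices, is where the real technical content lies. I expect it to require a back-and-forth argument on refining Kakutani--Rokhlin towers in the spirit of Theorem~\ref{K-R}.

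For (i) $\Rightarrow$ (ii): an orbit equivalence $F:X_1\to X_2$ yields Borel cocycles $n,m$ with $F\circ T_1 = T_2^{n}\circ F$ and $T_2\circ F = F\circ T_1^{m}$. The pullback $f\mapsto f\circ F$ is a unit- and positive-cone-preserving group isomorphism $C(X_2,\Z)\to C(X_1,\Z)$. The cocycles need not be continuous, so $T_2$-coboundaries do not pull back to $T_1$-coboundaries on the nose, but one checks that the discrepancy integrates to $0$ against every $T_1$-invariant measure (since $F$ induces a bijection of invariant measures at the measurable level). Quotienting both sides by $Z_{T_i}$ therefore yields the desired unital order isomorphism of $G_2$ and $G_1$.

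For (ii) $\Rightarrow$ (iii): a unital order isomorphism $\alpha:G_2\to G_1$ dualizes, via Theorem~\ref{measures-states}, to an affine homeomorphism $\Phi:M(X_1,T_1)\to M(X_2,T_2)$. To realize $\Phi$ by a homeomorphism of the underlying Cantor spaces, I would invoke the subset condition of the $D$-simplex $M(X_2,T_2)$. Starting from a clopen partition $\mathcal{Q}$ of $X_1$, the images of its characteristic functions under $\alpha^{-1}$ are classes in $G_2$ whose representatives have the same $\Phi$-transferred measures; the subset condition allows us to realize these classes by an actual clopen partition $\mathcal{Q}'$ of $X_2$ whose pieces match $\mathcal{Q}$ pointwise in measure. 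Iterating along a generating sequence of clopen partitions and passing to a diagonal limit produces a Boolean isomorphism of the clopen algebras, hence a homeomorphism $F$ with $F_{*}=\Phi$.

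For (iii) $\Rightarrow$ (i): here is the main obstacle. Starting from a homeomorphism $F$ matching invariant-measure simplices, I would inductively construct paired Kakutani--Rokhlin tower sequences $\mathscr{P}_1(n)$ for $(X_1,T_1)$ and $\mathscr{P}_2(n)$ for $(X_2,T_2)$ that are \emph{mutually refining under $F$}, after possibly modifying $F$ on a small clopen set at each stage. At odd stages I would refine on the $X_1$ side, using Propositions~\ref{K-R height} and~\ref{K-R refine} together with the subset condition applied on $X_2$ through the matched measures to place the required clopen sets of appropriate size; at even stages I would do the symmetric construction with the roles of $X_1$ and $X_2$ exchanged. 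Because each refinement on one side sits inside the tower structure of the other, the composition with $F$ in the limit sends $T_1$-orbits into $T_2$-orbits off an exceptional orbit on each side; a standard touch-up along these exceptional orbits produces a genuine orbit equivalence. The delicate point, and in my view the true heart of the argument, is the simultaneous bookkeeping: one must control, at every stage, both the associated cocycles and the clopen sets carefully enough that in the limit the cocycles are finite-valued and continuous off the exceptional orbits rather than merely Borel, which is precisely the feature that distinguishes topological orbit equivalence from its measurable counterpart.
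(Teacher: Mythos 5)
First, a point of comparison that matters: the paper does not prove this statement at all. It is Theorem $2.2$ of Giordano--Putnam--Skau, quoted verbatim as background (it appears twice, as Theorem~\ref{GPS Theorem} and again as Theorem~\ref{GPS}), and the paper simply cites \cite{GPS}. So there is no internal proof to measure yours against; your proposal must stand as an attempted proof of the GPS theorem itself. Your (i) $\Rightarrow$ (ii) and (ii) $\Rightarrow$ (iii) are reasonable in outline. For the first, the cleaner route is (i) $\Rightarrow$ (iii) $\Rightarrow$ (ii): an orbit equivalence pushes $M(X_{1},T_{1})$ bijectively onto $M(X_{2},T_{2})$ (by a decomposition over the level sets of the Borel cocycle, exactly as in Proposition~\ref{Simplex inequality}), and since $Z_{T_{i}}$ is defined purely in terms of the invariant measures, pullback by $F$ descends to a unital order isomorphism $C(X_{2},\Z)/Z_{T_{2}}\rightarrow C(X_{1},\Z)/Z_{T_{1}}$; this is essentially what you wrote. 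For (ii) $\Rightarrow$ (iii), realizing the dual affine isomorphism of simplices by a homeomorphism via the subset condition is the right idea, but the iteration must be a genuine back-and-forth, alternating which space is refined, to make the limiting Boolean map surjective; a one-sided iteration does not obviously yield a homeomorphism.

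The genuine gap is (iii) $\Rightarrow$ (i). This is the deep direction of the GPS theorem, and your sketch hides the entire difficulty in the sentence that ``a standard touch-up along these exceptional orbits produces a genuine orbit equivalence.'' There is no such standard touch-up. Mutually refining Kakutani--Rokhlin towers matched in measure give, in the limit, a map respecting the tower combinatorics, but making the resulting orbit cocycles well defined, finite, and consistent on full orbits is precisely the obstruction separating topological orbit equivalence from Dye's theorem; it is why the actual GPS proof runs through Bratteli--Vershik models, strong orbit equivalence, and $C^{*}$-algebraic classification rather than a direct tower argument. Note that the present paper's own main construction uses exactly the tower-copying strategy you describe and obtains only a \emph{speedup}: the limit map sends forward orbits into orbits, giving a containment rather than an equality of orbits. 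Upgrading that containment to equality cannot be mere bookkeeping: the final example of the paper exhibits a speedup of the dyadic odometer that is not even orbit equivalent to it, and whether speedup equivalence coincides with orbit equivalence in general is left open in the section on speedup equivalence. You should either import the GPS argument for this implication or substantially expand this step; as written it asserts the theorem rather than proving it.
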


Furthermore, recall what an isomorphism is in the category of unital partially ordered Abelian groups with interpolation.

\begin{definition} An \emph{\textbf{isomorphism}} between two unital partially ordered Abelian groups say $(G,G^{+},u)$ and $(H,H^{+},v)$ is a map $\varphi:G\rightarrow H$ a group and order  isomorphism and $\varphi(u)=v.$ In such a case we say that $(G,G^{+},u)$ is isomorphic to $(H,H^{+},v)$, written $(G,G^{+},u)\cong(H,H^{+},v).$
\end{definition}

We now proceed with a short proof of the first isomorphism theorem in the category of partially ordered Abelian groups with interpolation.
\begin{theorem}\label{first isomorphism} Let $(G,G^{+},u)$ and $(H,H^{+},v)$ be unital dimension groups. If $\varphi:H\rightarrow G$ is a surjective, order and order unit preserving homomorphism with $\varphi(H^{+})=G^{+}$, then 
$$(H/\ker(\varphi), H^{+}/\ker(\varphi),[v])\cong (G,G^{+},u)
$$ 
as unital dimension groups.
\end{theorem}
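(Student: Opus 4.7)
The plan is to obtain the isomorphism by invoking the classical first isomorphism theorem for Abelian groups and then upgrading it to a unital dimension group isomorphism. Concretely, let $\bar\varphi: H/\ker(\varphi) \to G$ be the group isomorphism given by $\bar\varphi([h]) = \varphi(h)$, which is well-defined, injective, and surjective by the usual argument from group theory. The remaining work is to verify that $\bar\varphi$ is an order and order-unit preserving isomorphism once $H/\ker(\varphi)$ is equipped with the positive cone $H^{+}/\ker(\varphi) := \{[h] : h \in H^{+}\}$ and the distinguished element $[v]$.

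First I would check that $(H/\ker(\varphi), H^{+}/\ker(\varphi))$ is genuinely a partially ordered group. Closure under addition and the generating condition $H^{+}/\ker(\varphi) - H^{+}/\ker(\varphi) = H/\ker(\varphi)$ descend immediately from the corresponding properties of $H^{+}$ in $H$. The subtle axiom is antisymmetry: if $[h] \in H^{+}/\ker(\varphi) \cap -(H^{+}/\ker(\varphi))$, write $[h] = [h_{1}] = -[h_{2}]$ with $h_{1}, h_{2} \in H^{+}$; then $\varphi(h) = \varphi(h_{1}) = -\varphi(h_{2})$ lies in $G^{+} \cap (-G^{+}) = \{0\}$, so $h \in \ker(\varphi)$ and $[h] = 0$. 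This is precisely the point where the hypothesis $\varphi(H^{+}) = G^{+}$, rather than mere inclusion, is essential.

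Next, the identity $\bar\varphi(H^{+}/\ker(\varphi)) = G^{+}$ is forced by $\varphi(H^{+}) = G^{+}$: the inclusion $\subseteq$ is immediate, and for the reverse any $g \in G^{+}$ may be written as $\varphi(h')$ with $h' \in H^{+}$, so $g = \bar\varphi([h'])$ with $[h'] \in H^{+}/\ker(\varphi)$. A symmetric argument shows $\bar\varphi^{-1}(G^{+}) = H^{+}/\ker(\varphi)$, so $\bar\varphi$ both preserves and reflects the order. Since $\varphi(v) = u$ we have $\bar\varphi([v]) = u$, and $[v]$ is an order unit in the quotient because $v$ is one in $H$ and the quotient map is order preserving. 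Unperforation and the Riesz interpolation property then transport across the order isomorphism $\bar\varphi$ from $(G, G^{+})$, yielding $(H/\ker(\varphi), H^{+}/\ker(\varphi), [v]) \cong (G, G^{+}, u)$ as unital dimension groups. The main obstacle is the antisymmetry verification above; everything else is routine diagram chasing on top of the classical first isomorphism theorem.
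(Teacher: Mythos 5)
Your proposal is correct and follows essentially the same route as the paper: invoke the classical first isomorphism theorem for Abelian groups and then check that the induced map carries $H^{+}/\ker(\varphi)$ onto $G^{+}$ and $[v]$ to $u$. The only difference is that you spell out the verification that the quotient is itself a partially ordered (indeed unital dimension) group, in particular the antisymmetry axiom, which the paper leaves implicit.
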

\begin{proof}
Define $\hat{\varphi}:H/\ker(\varphi)\rightarrow G$ by 
$$
\hat{\varphi}([h])=\varphi(h)
$$
for $h\in H$. By the first isomorphism theorem for groups $\hat{\varphi}$ is a group isomorphism; thus it suffices to show that $\hat{\varphi}\left(H^{+}/\ker(\varphi)\right)=G^{+}$, and $\hat{\varphi}([v])=u$. These follow immediately as $\varphi(H^{+})=G^{+}$ and $\varphi(v)=u$.
\end{proof}
We will need one more proposition before tackling $(2)\Rightarrow (3)$ and it begins to illustrate the reciprocal nature of the main theorem.
\begin{proposition}\label{3.9}
Let $\varphi:G_{2}\twoheadrightarrow G_{1}$ be as in $(2)$ of Theorem $\ref{Main Theorem}$. Then there exists an injection $\varphi_{*}:M(X_{1},T_{1})\hookrightarrow M(X_{2},T_{2})$
\end{proposition}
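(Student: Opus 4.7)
The plan is to pull back states via $\varphi$ and invoke the correspondence between states and invariant measures from Theorem~\ref{measures-states}. Recall from the end of Section 2 that $G_{i}=C(X_{i},\Z)/Z_{T_{i}}\cong K^{0}(X_{i},T_{i})/Inf(K^{0}(X_{i},T_{i}))$, and that every $T_{i}$-invariant Borel probability measure $\mu$ induces a state on $G_{i}$ via $[f]\mapsto\int f\,d\mu$. Because infinitesimals are by definition killed by every state, this induced map really does descend to the quotient, and Theorem~\ref{measures-states} shows that the assignment $\mu\mapsto p_{\mu}$ is a bijection between $M(X_{i},T_{i})$ and $S_{\textbf{1}}(G_{i})$.

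Given this dictionary, I would first define $\varphi_{*}$ as follows. Fix $\mu\in M(X_{1},T_{1})$, let $p_{\mu}\in S_{\textbf{1}}(G_{1})$ be its associated state, and set
$$
\tilde{p}_{\mu}:=p_{\mu}\circ\varphi:G_{2}\longrightarrow\R.
$$
I would then verify that $\tilde{p}_{\mu}$ is itself a state on $(G_{2},G_{2}^{+},\textbf{1})$. Positivity is immediate from the hypothesis $\varphi(G_{2}^{+})=G_{1}^{+}\subseteq G_{1}^{+}$ together with positivity of $p_{\mu}$, and normalization follows from $\varphi(\textbf{1})=\textbf{1}$ and $p_{\mu}(\textbf{1})=1$. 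Applying Theorem~\ref{measures-states} in the reverse direction produces a unique measure $\nu\in M(X_{2},T_{2})$ whose induced state is $\tilde{p}_{\mu}$, and I define $\varphi_{*}(\mu):=\nu$.

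For injectivity, suppose $\varphi_{*}(\mu_{1})=\varphi_{*}(\mu_{2})$ for $\mu_{1},\mu_{2}\in M(X_{1},T_{1})$. By the bijection of Theorem~\ref{measures-states}, the associated states on $G_{2}$ coincide, i.e. $p_{\mu_{1}}\circ\varphi=p_{\mu_{2}}\circ\varphi$. Since $\varphi$ is \emph{surjective} onto $G_{1}$, this forces $p_{\mu_{1}}=p_{\mu_{2}}$ as functionals on $G_{1}$, and another application of the state/measure bijection yields $\mu_{1}=\mu_{2}$. This is exactly where the surjectivity hypothesis on $\varphi$ is used, and it matches the reciprocal flavor advertised in the introduction: a \emph{surjection} on dimension groups produces an \emph{injection} on simplices of invariant measures.

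There is no real obstacle here; the argument is a diagram chase once one has identified $M(X_{i},T_{i})$ with $S_{\textbf{1}}(G_{i})$. The only two points that need a moment of care are (i) that passing to the quotient by $Z_{T_{i}}$ (equivalently, by infinitesimals) does not disturb the state/measure correspondence, which is automatic since states annihilate infinitesimals; and (ii) that the target of $p_{\mu}\circ\varphi$ is genuinely a state and not merely a positive homomorphism, which is precisely why we need $\varphi$ to preserve the distinguished order unit. With these points in place the proposition follows.
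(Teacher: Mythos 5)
Your argument is correct and is essentially the paper's own proof: the paper defines $\varphi_{*}\mu[h]=\int_{X}\varphi([h])\,d\mu$, which is exactly your pullback $p_{\mu}\circ\varphi$, and then verifies well-definedness, positivity, unitality, and additivity before invoking the state/measure correspondence. The only cosmetic difference is in the injectivity step, where the paper exhibits a concrete witness (a clopen set $C$ with $\mu(C)\neq\nu(C)$ and a positive preimage of $[\mathbbm{1}_{C}]$ under $\varphi$) rather than cancelling $\varphi$ abstractly via surjectivity, but the underlying use of $\varphi(G_{2}^{+})=G_{1}^{+}$ is the same.
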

\begin{proof}
We will show that $\varphi$ induces an injective map on $M(X_{1},T_{1})$ into the state space of $G_{2}$. From there we appeal to Theorem $\ref{measures-states}$, which says that the states and invariant measures are in bijective correspondence. Composing these two functions gives us our injection from $M(X_{1},T_{1})$ into $M(X_{2},T_{2})$.

Let $\mu\in M(X_{1},T_{1})$, $h\in C(Y,\Z)$ and define
\begin{align*}
\varphi_{*}\mu[h]&=\displaystyle\int_{X}\varphi([h])\,d\mu \\
&=\displaystyle\int_{X}g\, d\mu \text{\hspace{5mm} where $g\in C(X,\Z)$ and $g\in\varphi([h])$}
\end{align*}
Let us first show that $\varphi_{*}$ is well-defined. Let $h\in C(Y,\Z)$ and $g_{1},g_{2}\in C(X,\Z)$ be such that $g_{1},g_{2}\in\varphi([h])$; thus there exists $i\in Inf(G)$ such that $g_{1}+i=g_{2}$. Now we calculate
\begin{align*}
\displaystyle\int_{X}g_{2}\, d\mu &= \displaystyle\int_{X}(g_{1}+i)\, d\mu \\
&=\displaystyle\int_{X}g_{1}\, d\mu
\end{align*}
so $\varphi_{*}$ is well-defined. Since $\varphi$ is order unit preserving we see that
$$
\varphi_{*}\mu[1]=\displaystyle\int_{X}1\, d\mu =1.
$$
To see that $\varphi_{*}\mu$ is positive, let $h\in C(Y,\Z)$ be such that for every $x$, $h(x)\ge 0$, thus $[h]\in H^{+},$ and whence $\varphi([h])\ge 0$ as $\varphi$ is positive. So there exists $g\in C(X,\Z)$ such that for every $x,$ $g(x)\ge 0$ and $g\in\varphi([h])$. Thus,
$$
\varphi_{*}\mu[h]=\displaystyle\int_{X}g\,d\mu\ge 0.
$$
Finally, to see that $\varphi_{*}\mu$ is a homomorphism, let $h_{1},h_{2}\in C(Y,\Z)$. Observe,
\begin{align*}
\varphi_{*}\mu[h_{1}+h_{2}]&=\displaystyle\int_{X}\varphi([h_{1}+h_{2}])\, d\mu \\
&=\displaystyle\int_{X}\varphi([h_{1}]+[h_{2}])\,d\mu \\
&=\displaystyle\int_{X}(\varphi([h_{1}])+\varphi([h_{2}]))\,d\mu \\
&=\displaystyle\int_{X}\varphi([h_{1}])\,d\mu+\displaystyle\int_{X}\varphi([h_{2}])\,d\mu \\
&=\varphi_{*}\mu[h_{1}]+\varphi_{*}\mu[h_{2}].
\end{align*}
Therefore, $\varphi_{*}\mu$ is a state on $G_{2}$ as desired. 

Now we will show that $\varphi_{*}$ is injective. Let $\mu,\nu\in M(X_{1},T_{1})$ such that $\mu\neq\nu$. So there exists a clopen set $C$ such that,
$$
\int_{X}\mathbbm{1}_{C}\,d\mu=\mu(C)\neq\nu(C)=\int_{X}\mathbbm{1}_{C}\,d\nu
$$
Since $\varphi(H^{+})=G^{+}$ there exists $h\in C(Y,\Z)$,\text{ for every $x$,} $h(x)\ge 0$ such that $\varphi([h])=[\mathbbm{1}_{C}]$, rather $\mathbbm{1}_{C}\in\varphi([h])$. Now we compute,
$$
\varphi_{*}\mu([h])=\int_{X}\mathbbm{1}_{C}\,d\mu=\mu(C)\neq\nu(C)=\int_{X}\mathbbm{1}_{C}\,d\nu=\varphi_{*}\nu([h]).
$$
So $\varphi_{*}$ is injective. Recall \cite[Cor. $4.2$]{Effros} which says that the set of states is in bijective correspondence with the set of invariant measures and so we get our desired injection, by composing $\varphi_{*}$ with this bijection. 
\end{proof}

With Theorem~\ref{first isomorphism}, Proposition~\ref{3.9} and Theorem $2.2$ of \cite{GPS} at our disposal, we wish to dispense of $(2)\Rightarrow (3)$. 
\begin{proof}
By assuming $(2)$ and in conjunction with Theorem~\ref{first isomorphism} we know that $\hat{\varphi}$ is an unital dimension group isomorphism
$$
\hat{\varphi}:(G_{2}/\ker(\varphi),G_{2}^{+}/\ker(\varphi),[\textbf{1}]_{\varphi})\rightarrow (G_{1},G_{1}^{+},\textbf{1});
$$
so in particular $(G_{2}/\ker(\varphi),G_{2}^{+}/\ker(\varphi),[\textbf{1}]_{\varphi})$ is itself a unital dimension group. As a result of the isomorphism, $(G_{2}/\ker(\varphi),G_{2}^{+}/\ker(\varphi),[\textbf{1}]_{\varphi})$ must have one infinitesimal, namely $[0]_{\varphi}$. Furthermore, $H^{+}_{1}/\ker(\varphi)$ is determined by $\varphi_{*}(M(X_{1},T_{1}))$ by Proposition~\ref{3.9}.  By \cite[Thm. 2.2]{GPS} there exists a homeomorphism $F:X_{1}\rightarrow X_{2}$ such that the invariant measures associated to $(X_{1},T_{1})$ are taken bijectively onto the $g$-invariant measures, where $g$ is a minimal realization of
$$
(G_{2}/\ker(\varphi),G_{2}^{+}/\ker(\varphi),[\textbf{1}]_{\varphi})
$$
by Theorem~\ref{DimGpReal}. Finally, Proposition~\ref{3.9} also shows that the invariant measures associated to $G_{2}/\ker(\varphi)$ are a subset of $M(X_{2},T_{2})$, and we have our injection from $M(X_{1},T_{1})$ into $M(X_{2},T_{2})$ via a space homeomorphism from $X_{1}$ to $X_{2}$ as desired. Note, that $(X_{2},g)$ and $(X_{1},T_{1})$ are orbit equivalent as a result \cite[Thm. 2.2]{GPS}, since their dimension groups modulo infinitesimals are isomorphic as dimension groups.
\end{proof}

\subsection{Proof of $(3)$ implies $(1)$}
This is by far the most technical portion of the paper. The idea of the proof is quite similar to the construction presented in the Arnoux, Ornstein, and Weiss paper \cite{AOW}. In fact our key lemma, Lemma $\ref{Key Lemma}$ is a topological version of the key lemma from \cite{AOW} and a modification of Proposition $2.6$ from \cite{Glasner-Weiss}. Note a key difference in our lemma is the range of our $p$ map is $\Z^{+}$ instead of $\Z$. This lemma allows us to actually construct the speedup on the non-final levels on a Kakutani-Rokhlin tower partition. 

Before moving forward with the construction to prove $(3)$ implies $(1)$ we will prove a short sequence of lemmas culminating with our key lemma, Lemma \ref{Key Lemma}. Again, many of the following propositions and lemmas are similar to propositions and lemmas found in \cite{Glasner-Weiss}.
\begin{proposition}\label{clopen}
Let $(X,T)$ be a minimal Cantor system. We have for every $\varepsilon>0$ there exists a nonempty clopen set $C$ such that for all $\mu\in M(X,T)$, $\mu(C)<\varepsilon$.
\end{proposition}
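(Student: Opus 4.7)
The plan is to apply Proposition~\ref{K-R height} to produce a Kakutani-Rokhlin tower partition whose columns are all very tall, and then take $C$ to be the base of a single column. Any $T$-invariant probability measure must assign each level of a column the same mass, so a base of height $h$ has measure at most $1/h$, independent of which measure $\mu \in M(X,T)$ we test against.

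More precisely, given $\varepsilon > 0$, first choose a positive integer $n$ with $1/n < \varepsilon$. By Proposition~\ref{K-R height} there is a Kakutani-Rokhlin tower partition
$$
\mathscr{P} = \{T^{j}(C_{i}) : 1 \le i \le t,\ 0 \le j < h_{i}\}
$$
of $X$ with $h_{i} > n$ for every $i$. Set $C = C_{1}$. This set is clopen by the definition of a Kakutani-Rokhlin tower partition, and nonempty since it is the base of a column of the partition.

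Now fix any $\mu \in M(X,T)$. Because $\mu$ is $T$-invariant, $\mu(T^{j}C_{1}) = \mu(C_{1})$ for $0 \le j < h_{1}$, and the levels $T^{0}C_{1}, T^{1}C_{1}, \dots, T^{h_{1}-1}C_{1}$ are pairwise disjoint (they belong to $\mathscr{P}$). Hence
$$
h_{1}\,\mu(C_{1}) \;=\; \mu\!\left(\disUnion_{j=0}^{h_{1}-1} T^{j}C_{1}\right) \;\le\; \mu(X) \;=\; 1,
$$
so $\mu(C) = \mu(C_{1}) \le 1/h_{1} < 1/n < \varepsilon$. Since $\mu$ was an arbitrary element of $M(X,T)$, this gives the desired clopen set.

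There is no real obstacle here: the content of the proposition is entirely absorbed by the existence of K-R partitions with arbitrarily tall columns (Proposition~\ref{K-R height}) together with the invariance of $\mu$. The only care needed is to apply the height bound uniformly in $\mu$, which is automatic since the partition is chosen before $\mu$ is introduced.
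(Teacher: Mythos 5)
Your proof is correct, and it takes a genuinely different route from the paper. The paper argues by contradiction: assuming some $\varepsilon>0$ witnesses the failure, it takes clopen balls $C_{n}=B_{1/n}(x)$ shrinking to a point, extracts a weak$^{*}$ limit $\nu$ of measures $\mu_{n}$ with $\mu_{n}(C_{n})\ge\varepsilon$ using compactness of $M(X,T)$, and shows $\nu(\{x\})\ge\varepsilon$, contradicting non-atomicity of invariant measures. Your argument is direct: Proposition~\ref{K-R height} supplies a Kakutani-Rokhlin partition with all columns of height greater than $n>1/\varepsilon$, and since the levels $T^{j}C_{1}$, $0\le j<h_{1}$, are pairwise disjoint and all carry the same mass under any invariant $\mu$, the base satisfies $\mu(C_{1})\le 1/h_{1}<\varepsilon$ uniformly in $\mu$. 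Your version is more elementary and quantitative (it gives the explicit bound $1/h_{1}$ and never invokes weak$^{*}$ compactness or non-atomicity), at the cost of leaning on the unproved-but-standard Proposition~\ref{K-R height}; the paper's version is self-contained modulo basic facts about $M(X,T)$ and would transfer to settings without a tower structure. Both are valid here, and your uniformity-in-$\mu$ remark at the end correctly identifies the only point where care is needed.
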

\begin{proof}
Suppose, towards a contradiction, there exists an $\varepsilon>0$ such that for every non-empty clopen set $C$ there exists $\nu\in M(X,T)$ such that $\nu(C)\ge\varepsilon$. Fix $x\in X$, let $C_{n}=B_{\frac{1}{n}}(x)$ and let $\mu_{n}$ be a measure in $M(X,T)$ such that $\mu_{n}(C_{n})\ge\varepsilon$. By compactness of $M(X,T)$ there exists $\nu\in M(X,T)$ and $n_{k}\nearrow\infty$ such that 
$$
\begin{tikzpicture}
\matrix(m)[matrix of math nodes,
row sep=2.6em, column sep=2.8em,
text height=1.5ex, text depth=0.25ex]
{\mu_{n_{k}} & \nu. \\};
\path[->,font=\scriptsize,>=angle 90]
(m-1-1) edge node[auto] {$\weak^{*}$} (m-1-2);
\end{tikzpicture}
$$
Clearly,
$$
\displaystyle\Intersect_{n=1}^{\infty}C_{n}=\displaystyle\Intersect_{k=1}^{\infty}C_{n_{k}}=\{x\}
$$
and so
$$
\nu(\{x\})=\displaystyle\lim_{k\rightarrow\infty}\nu(C_{n_{k}})
$$
and we claim that for all $k\in\mathbb{Z}^{+},\,\nu(C_{n_{k}})\ge\varepsilon$. Fix $k\in\mathbb{Z}^{+}$, since $C_{n_{k}}$ is clopen we have by definition
$$
\nu(C_{n_{k}})=\lim_{j\rightarrow\infty}\mu_{n_{j}}(C_{n_{k}})
$$
and for $k<j$ we have that 
\begin{align*}
C_{n_{k}}\supseteq C_{n_{j}}&\Rightarrow \mu_{n_{j}}(C_{n_{k}})\ge\mu_{n_{j}}(C_{n_{j}})\ge\varepsilon
\end{align*}
thus $\nu(C_{n_{k}})\ge\varepsilon$. So we see that
$$
\nu(\{x\})=\displaystyle\lim_{k\rightarrow\infty}\nu(C_{n_{k}})\ge\varepsilon>0
$$
contradicting the fact that $\nu$ must be non-atomic.
\end{proof}
We immediately use this proposition to prove the following lemma. 
\begin{lemma}\label{clopen1}
Let $(X,T)$ be a minimal Cantor system. Then for every $\varepsilon>0$ there exists $\delta>0$ such that for every $A\in\mathscr{B}(X)$ with $\emph{\diam}(A)<\delta$ and every $\mu\in M(X,T),$ we have $\mu(A)<\varepsilon$.
\end{lemma}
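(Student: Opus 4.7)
The plan is to argue by contradiction and imitate the compactness/weak-$^{*}$ argument used in the proof of Proposition~\ref{clopen}, upgrading it to handle arbitrary small-diameter Borel sets instead of a single specified clopen set. The key ingredients are: (i) the non-atomicity of every $\mu\in M(X,T)$, which is recorded in the preliminaries for minimal Cantor systems; (ii) weak-$^{*}$ compactness of $M(X,T)$; (iii) the fact that in a Cantor space every point has a countable neighborhood base consisting of clopen sets, and weak-$^{*}$ convergence is exact on clopens since their boundaries are empty.

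Suppose the conclusion fails. Then I would fix a witnessing $\varepsilon>0$ and produce, for each $n\in\Z^{+}$, a Borel set $A_{n}\subseteq X$ with $\diam(A_{n})<1/n$ and a measure $\mu_{n}\in M(X,T)$ with $\mu_{n}(A_{n})\ge\varepsilon$. Then I would choose a point $x_{n}\in A_{n}$ for each $n$. Using compactness of $X$, pass to a subsequence along which $x_{n}\to x$ for some $x\in X$; using weak-$^{*}$ compactness of $M(X,T)$, pass to a further subsequence along which $\mu_{n}\to\nu$ for some $\nu\in M(X,T)$. I will keep denoting the surviving subsequence by $(x_{n},\mu_{n})$.

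The main step is then to show $\nu(\{x\})\ge\varepsilon$, which will contradict the non-atomicity of $\nu$. For any clopen neighborhood $U$ of $x$, the hypothesis $x_{n}\to x$ and $\diam(A_{n})\to 0$ force $A_{n}\subseteq U$ for all sufficiently large $n$, so $\mu_{n}(U)\ge\mu_{n}(A_{n})\ge\varepsilon$ eventually. Because $U$ is clopen, $\partial U=\emptyset$ and weak-$^{*}$ convergence gives $\mu_{n}(U)\to\nu(U)$, so $\nu(U)\ge\varepsilon$. Now choose a decreasing sequence of clopen neighborhoods $U_{1}\supseteq U_{2}\supseteq\cdots$ of $x$ with $\bigcap_{m}U_{m}=\{x\}$, which exists since $X$ is zero-dimensional and metrizable. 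Continuity of $\nu$ from above yields $\nu(\{x\})=\lim_{m}\nu(U_{m})\ge\varepsilon>0$, contradicting the fact that $\nu$ is non-atomic.

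The only real obstacle is making sure weak-$^{*}$ convergence transfers uniformly from the shrinking sets $A_{n}$ to a single point — the trick is to relax from the Borel sets $A_{n}$ to a fixed clopen neighborhood $U$ of the limit point $x$, which both contains all tail $A_{n}$'s and has null boundary so that weak-$^{*}$ convergence is an equality on it. Everything else is routine extraction of subsequences and invocation of non-atomicity, which is already guaranteed by the standing hypothesis that $(X,T)$ is a minimal Cantor system.
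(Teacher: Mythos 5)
Your proof is correct, but it takes a genuinely different route from the paper. The paper does not argue by contradiction at this stage: it invokes Proposition~\ref{clopen} to produce a single nonempty clopen set $C$ with $\mu(C)<\varepsilon$ for all $\mu\in M(X,T)$, uses minimality to write $X=\bigcup_{i=-N}^{N}T^{i}(C)$, takes $\delta$ to be a Lebesgue number of this finite open cover, and then observes that any Borel $A$ with $\diam(A)<\delta$ lies inside some $T^{i}(C)$, whence $\mu(A)\le\mu(T^{i}(C))=\mu(C)<\varepsilon$ by $T$-invariance. Your argument instead reruns the weak-$^{*}$ compactness/non-atomicity contradiction of Proposition~\ref{clopen} directly on the shrinking Borel sets $A_{n}$, anchored at an accumulation point $x$ of chosen points $x_{n}\in A_{n}$; the step where you relax from $A_{n}$ to a fixed clopen neighborhood $U\supseteq A_{n}$ (eventually) so that weak-$^{*}$ convergence applies exactly is the right move, and the shrinking clopen base at $x$ plus continuity from above correctly delivers $\nu(\{x\})\ge\varepsilon$. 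What each approach buys: the paper's version is shorter given that Proposition~\ref{clopen} is already in hand, produces an explicit $\delta$ (the Lebesgue number), and uses invariance of the measures directly; your version is self-contained, subsumes Proposition~\ref{clopen} as a special case, and uses invariance only through the non-atomicity of measures in $M(X,T)$. Both are valid proofs of the lemma.
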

\begin{proof}
Let $\varepsilon>0$ be given, by Proposition $\ref{clopen}$ there exists a non-empty clopen set $C$ such that for all $\mu\in M(X,T),\,\, 0<\mu(C)<\varepsilon$. Since $C$ is non-empty, clopen, and as $T$ is minimal there exists $N\in\mathbb{Z}^{+}$ such that
$$
X=\Union_{i=-N}^{N}T^{i}(C).
$$
Let $\delta>0$ be the Lebesgue number for the open cover $\{T^{i}C\}_{i=-N}^{N}$ (recall that a Lebesgue number for an open covering $\mathcal{A}$ of a compact metric space $X$ is a constant $\delta>0$ such that for each subset of $X$ having diameter less than $\delta$, there exists an element of $\mathcal{A}$ containing it). Now let $A\in\mathscr{B}(X)$ with $\diam(A)<\delta$, then
\begin{align*}
\diam(A)<\delta&\Rightarrow A\subseteq T^{i}(C) \text{\hspace{5mm} for some $i\in\{-N,\dots,N\}$} \\
&\Rightarrow\mu(A)\le\mu(T^{i}(C)) \text{\hspace{5mm} for every $\mu\in M(X,T)$} \\
&\Rightarrow\mu(A)\le\mu(C) \text{\hspace{5mm} as $\mu\in M(X,T)$} \\
&\Rightarrow\mu(A)<\varepsilon.
\end{align*}
So for every $\mu\in M(X,T)$ and $A\in\mathscr{B}(X)$ with $\diam(A)<\delta$ we have $\mu(A)<\varepsilon$ as desired.
\end{proof}
Before we can state and prove one of our key lemmas we need one more proposition.
\begin{proposition}\label{tall tower}
Let $(X,T)$ be a minimal Cantor system, and $f:X\rightarrow\R$ a continuous function. If 
$$
\inf\left\{\int_{X}f\,d\mu:\mu\in M(X,T)\right\}>c>0
$$
then there exists a $N_{0}\in\N$ such that for every $n\ge N_{0}$ and for all $x\in X$ we have
$$
\dfrac{1}{n}\sum_{j=0}^{n-1}f(T^{j}(x))\ge c.
$$
\end{proposition}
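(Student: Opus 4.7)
The plan is to argue by contradiction using a weak-$*$ compactness argument on empirical measures, a completely standard technique in topological dynamics. Suppose the conclusion fails. Then for every $N \in \N$ we can find $n_N \ge N$ and a point $x_N \in X$ with
$$
\frac{1}{n_N}\sum_{j=0}^{n_N - 1} f(T^j x_N) < c.
$$

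First I would form the empirical measures
$$
\mu_N \;=\; \frac{1}{n_N}\sum_{j=0}^{n_N - 1} \delta_{T^j x_N} \;\in\; M(X).
$$
Since $M(X)$ is weak-$*$ compact, pass to a subsequence $N_k$ so that $\mu_{N_k} \to \mu$ weak-$*$ for some $\mu \in M(X)$. The next step is to check that $\mu \in M(X,T)$. This is the classical observation that for any $g \in C(X,\R)$ one has
$$
\left|\int g \, d(T_* \mu_N) - \int g\, d\mu_N\right| \;=\; \frac{1}{n_N}\bigl|g(T^{n_N} x_N) - g(x_N)\bigr| \;\le\; \frac{2\|g\|_\infty}{n_N},
$$
which tends to $0$, so any weak-$*$ limit point of $\{\mu_N\}$ is $T$-invariant.

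Finally, since $f$ is continuous, weak-$*$ convergence gives
$$
\int_X f\, d\mu \;=\; \lim_{k\to\infty} \int_X f\, d\mu_{N_k} \;=\; \lim_{k\to\infty} \frac{1}{n_{N_k}}\sum_{j=0}^{n_{N_k} - 1} f(T^j x_{N_k}) \;\le\; c.
$$
But $\mu \in M(X,T)$ and our hypothesis was $\inf\{\int f\, d\nu : \nu \in M(X,T)\} > c$, so $\int f\,d\mu > c$, a contradiction. Taking $N_0$ to be the least $N$ beyond which the inequality holds for all points completes the argument.

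There is no real obstacle; the one place to be slightly careful is verifying that the weak-$*$ limit of the empirical measures is $T$-invariant (the short computation above), and ensuring the sequence $n_N \to \infty$ so that the $\frac{2\|g\|_\infty}{n_N}$ error vanishes. Minimality of $T$ is not directly used in this proposition; what one uses is only compactness of $M(X,T)$ in the weak-$*$ topology and continuity of $f$.
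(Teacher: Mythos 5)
Your proposal is correct and follows essentially the same route as the paper: negate the conclusion, form the empirical measures $\frac{1}{n_N}\sum_{j=0}^{n_N-1}\delta_{T^j x_N}$, extract a weak-$*$ limit, observe it is $T$-invariant (the paper cites Walters, Theorem 6.9, where you spell out the telescoping estimate), and derive $\int f\,d\mu \le c$, contradicting the hypothesis. Your side remark that minimality is not needed here is also accurate.
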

\begin{proof}
Fix $f\in C(X,\R)$ and suppose, towards a contradiction, that our proposition is false; that is, there is no such $N_{0}\in\N$. So there exists $\{N_{k}\}_{k\ge0}$ and $\{x_{k}\}_{k\ge0}$ such that $N_{k}\nearrow\infty$, and for a fixed $k$
$$
\dfrac{1}{N_{k}}\sum_{j=0}^{N_{k}-1}f(T^{j}(x_{k}))<c.
$$
Consider the following sequence of measures $\{\mu_{k}\}_{k\ge 0}$, where for fixed $k$ we have
$$
\mu_{k}=\dfrac{1}{N_{k}}\sum_{j=0}^{N_{k}-1}\delta_{T^{j}(x_{k})}
$$
where $\delta$ represents the Dirac measure. By compactness of $M(X)$, the collection of all Borel probability measures on $X$, there exists $\nu\in M(X)$ and increasing sequence $\{k_{\ell}\}_{l\ge 0}\nearrow\infty$ such that
$$
\begin{tikzpicture}
\matrix(m)[matrix of math nodes,
row sep=2.6em, column sep=2.8em,
text height=1.5ex, text depth=0.25ex]
{\mu_{k_{\ell}} & \nu \\};
\path[->,font=\scriptsize,>=angle 90]
(m-1-1) edge node[auto] {$\weak^{*}$} (m-1-2);
\end{tikzpicture}
$$
Recall by \cite[Theorem $6.9$]{Walters} $\nu\in M(X,T)$; we will now show that $\int_{X}f\,d\nu\le c$ which will give us our contradiction. Since $f$ is continuous we have that
\begin{align*}
\displaystyle\int_{X}f\,d\nu&=\displaystyle\lim_{\ell\rightarrow\infty}\int_{X}f\,d\mu_{k_{\ell}} \\
&=\displaystyle\lim_{\ell\rightarrow\infty}\dfrac{1}{N_{\ell}}\sum_{j=0}^{N_{k_{\ell}}-1}f(T^{j}(x_{k_{\ell}})) \\
&\le c.
\end{align*}
This is a contradiction, which proves our proposition. 
\end{proof}
We use Propositions~\ref{clopen} and \ref{tall tower} in conjunction with Lemma~\ref{clopen1} to prove Lemma~\ref{pkey lemma}. This lemma serves as a precursor to the key lemma, and is instrumental for proving Lemma~\ref{Key Lemma}.
\begin{lemma}\label{pkey lemma}
Let $(X,T)$ be a minimal Cantor system, and let $A,B$ be non-empty, disjoint, clopen subsets of $X$. If for all $\mu\in M(X,T),\, \mu(A)<\mu(B),$ then there exists $p:A\rightarrow\Z^{+}$ such that $S:A\rightarrow B$ defined as $S(x)=T^{p(x)}(x)$ is a homeomorphism onto its image.
\end{lemma}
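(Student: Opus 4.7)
The plan is to construct $p$ directly via a ``first-excess'' rule on orbit sums of $f := \mathbbm{1}_B - \mathbbm{1}_A$, bypassing any explicit Kakutani-Rokhlin tower construction. The role of Proposition~\ref{tall tower} will be to guarantee that the resulting $p$ is uniformly bounded on $X$, which makes continuity automatic and reduces injectivity to a short cocycle computation.

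First, since $\int f\,d\mu = \mu(B)-\mu(A) > 0$ for every $\mu \in M(X,T)$, and $\mu \mapsto \int f\,d\mu$ is weak$^{*}$ continuous on the compact set $M(X,T)$, one gets $c_0 := \min_{\mu\in M(X,T)}\int f\,d\mu > 0$. Set $c := c_0/2$. Applying Proposition~\ref{tall tower} to $f$ yields $N_0 \in \N$ with $\sum_{j=0}^{n-1} f(T^j y) \geq cn$ for every $n \geq N_0$ and every $y \in X$; enlarge $N_0$ if necessary so that $cN_0 \geq 1$. For $x \in X$ let
$$
F_x(n) := \sum_{j=1}^{n} f(T^j x), \qquad F_x(0) := 0.
$$
Applying the displayed inequality at $y = T(x)$ gives $F_x(N_0) \geq cN_0 \geq 1$. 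Since $F_x$ has integer increments in $\{-1,0,+1\}$, it cannot skip the value $1$, so it first attains it at some index in $\{1, \ldots, N_0\}$; set
$$
p(x) := \min\{n \geq 1 : F_x(n) = 1\}.
$$

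The unit jump $F_x(p(x))-F_x(p(x)-1) = 1 = f(T^{p(x)}x)$ forces $T^{p(x)}(x) \in B$. The map $p$ is locally constant on $X$ because, for each $n_0 \leq N_0$, $\{p = n_0\}$ is a finite Boolean combination of the clopen sets $T^{-j}(A)$, $T^{-j}(B)$, $T^{-j}(X\setminus(A\cup B))$ for $j = 1, \ldots, n_0$. Consequently $S(x) := T^{p(x)}(x)$ defines a continuous map $S : A \to B$.

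The heart of the proof is injectivity of $S$ on $A$. Suppose $S(x_1) = S(x_2)$ with $x_1, x_2 \in A$ and $p(x_1) \leq p(x_2)$. Equality forces $x_1 = x_2$ by injectivity of $T^{p(x_1)}$, so assume $m := p(x_2)-p(x_1) > 0$, whence $x_1 = T^m(x_2)$ with $0 < m < p(x_2)$. By the minimality defining $p(x_2)$ together with the unit-increment property, $F_{x_2}(n) \leq 0$ for every $n < p(x_2)$; since $x_1 = T^m(x_2) \in A$ gives $f(T^m x_2) = -1$, this yields $F_{x_2}(m) = F_{x_2}(m-1) - 1 \leq -1$. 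But the cocycle identity $F_{x_1}(n) = F_{x_2}(n+m) - F_{x_2}(m)$ evaluated at $n = p(x_1)$ gives
$$
1 = F_{x_1}(p(x_1)) = F_{x_2}(p(x_2)) - F_{x_2}(m) = 1 - F_{x_2}(m),
$$
so $F_{x_2}(m) = 0$, a contradiction. Hence $S$ is a continuous injection from the compact space $A$ into the Hausdorff space $B$, and is therefore a homeomorphism onto its image $S(A) \subseteq B$. The main subtlety is recognizing that the ``first time $F_x$ hits $1$'' prescription encodes exactly the right matching of $A$-visits to $B$-visits; once that definition is in hand, all the verifications are formal.
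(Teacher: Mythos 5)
Your proof is correct, and it takes a genuinely different route from the paper's. The paper builds a Kakutani--Rokhlin tower partition with columns of height at least $N_{0}$, refines it by $\{A,B,(A\cup B)^{c}\}$, observes that each column then has more $B$-levels than $A$-levels, chooses an arbitrary injection $\Gamma$ from $A$-level indices to $B$-level indices, and defines $T^{p(\cdot)}$ level by level --- invoking the first-return map to a level in the case where the target $B$-level sits \emph{below} the source $A$-level, so as to keep $p$ positive. Your ``first passage to $1$'' rule for the ergodic sums $F_{x}(n)=\sum_{j=1}^{n}(\mathbbm{1}_{B}-\mathbbm{1}_{A})(T^{j}x)$ accomplishes the same matching canonically: positivity of $p$ is built in, the bound $p\le N_{0}$ (from Proposition~\ref{tall tower} plus the unit-increment intermediate value argument) makes $p$ locally constant and hence $S$ continuous with no further work, and the down-the-tower case that forces the paper into the return-map detour simply never arises. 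Injectivity, which the paper gets from disjointness of levels and columns, you get from the cocycle identity $F_{x_{1}}(n)=F_{x_{2}}(n+m)-F_{x_{2}}(m)$ together with the observation that $F_{x_{2}}$ stays $\le 0$ before time $p(x_{2})$ and drops to $\le-1$ at any intermediate visit to $A$; I checked this computation and it is sound. What the paper's approach buys is alignment with the tower machinery used throughout the rest of the $(3)\Rightarrow(1)$ construction; what yours buys is a shorter, choice-free argument with continuity for free.
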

\begin{proof}
Let $A,B$ be disjoint clopen subsets of $X$ and define $f=\mathbbm{1}_{B}-\mathbbm{1}_{A}$. Since both $A$ and $B$ are clopen it follows that $f:X\rightarrow\Z$ is continuous. Moreover since $\int{f}d\mu>0$ for every $\mu\in M(X,T)$ and $M(X,T)$ is compact in the $\weak^{*}$ topology it follows by assumption that 
$$
\inf\left\{\int_{X}f\, d\mu:\mu\in M(X,T)\right\}>0.
$$
Choose $c\in\mathbb{R}$ so that 
$$
\inf\left\{\int_{X}f\, d\mu:\mu\in M(X,T)\right\}>c>0
$$
So by Proposition~\ref{tall tower} find $N_{0}$ large such that for every $n\ge N_{0}$ and every $x\in X$ we have
$$
\dfrac{1}{n}\displaystyle\sum_{j=0}^{n-1}f(T^{j}x)\ge c.
$$
Use Proposition~\ref{K-R height} to construct a Kakutani-Rokhlin tower partition of $X$ such that for each $i,\, h_{i}\ge N_{0}$. Let the following denote our tall Kakuntani-Rokhlin tower partition:
$$
\{T^{j}(D_{i}):1\le i\le t,\, 0\le j<h_{i}\}.
$$ 
Use Proposition~\ref{K-R refine} to refine each tower with respect to the partition $\{A,B,(A\union B)^{c}\}$. By a slight abuse of notation we will not rename our new Kakutani-Rokhlin tower partition, and with that let us look at a single column of our partition. Fix $i=1$, and consider the column
$$
\left\{T^{j}(D_{1}):0\le j<h_{1}\right\}.
$$
Let $x\in D_{1}$, then as $h_{1}\ge N_{0}$ we must have that 
\begin{equation}
\dfrac{1}{h_{1}}\displaystyle\sum_{j=0}^{h_{1}-1}f(T^{j}x)\ge c>0
\end{equation}
thus there are more $B$ levels than $A$ levels in this column. In other words let $J$ and $K$ be defined below
\begin{align*}
J&=\{j_{1},j_{2},\dots,j_{m}:T^{j_{i}}(D_{1})\intersect A\neq\emptyset,\, i=1,2,\dots m\} \\
K&=\{k_{1},k_{2},\dots,k_{r}:T^{k_{i}}(D_{1})\intersect B\neq\emptyset, i=1,2,\dots,r\}
\end{align*}
and by $(1)$ we have that $|J|<|K|$. Choose any injection $\Gamma:J\hookrightarrow K$. 

We exploit the inherit order structure of the column to define our map $p.$ First, we give a picture with an arbitrary injection to help the reader visualize what is going on. All $A$-levels in our first column are colored \textcolor{red}{red} and all of the $B$-levels in the first column are colored \textcolor{blue}{blue}.
$$
\begin{tikzpicture}
\draw[blue](0,0)--(2,0) node[black][right]{$D_{1}$};
\draw[blue](0,1)--(2,1) node[black][right]{$T(D_{1})$};
\draw[red](0,2)--(2,2) node[black][right]{$T^{2}(D_{1})$};
\draw[red](0,3)--(2,3) node[black][right]{$T^{3}(D_{1})$};
\draw(0,4)--(2,4) node[black][right]{$T^{4}(D_{1})$};
\draw[red](0,5)--(2,5) node[black][right]{$T^{5}(D_{1})$};
\draw[blue](0,6)--(2,6) node[black][right]{$T^{6}(D_{1})$};
\draw[blue](0,7)--(2,7) node[black][right]{$T^{7}(D_{1})$};
\draw[Mulberry][->](0,2).. controls (-1,4) .. node[black][midway,left]{$T^{p(\cdot)}$} (0,6);
\draw[Mulberry][->](2,3).. controls (3,4) .. (2,7);
\draw[Mulberry][->](2,5) .. controls (3.5,4) ..(2,1);
\end{tikzpicture}
$$
We break the definition of $T^{p(\cdot)}$ into the following two cases. First fix $i\in\{1,2,\dots,m\}$.
\vskip 5mm
\textbf{Case $1$: $\Gamma(j_{i})>j_{i}$.}
In this case we can simply define $p:T^{j_{i}}(D_{1})\rightarrow\Z^{+}$ by $p(x)=\Gamma(j_{i})-j_{i}$. By assumption $p$ is positive and as $T$ is a homeomorphism we have
$$
T^{p(\cdot)}=T^{\Gamma(j_{i})-j_{i}}
$$
is a homeomorphism from $T^{j_{i}}(D_{1})\subseteq A$ to $T^{\Gamma(j_{i})}(D_{1})\subseteq B$. Furthermore, we see that 
$$
T^{p(\cdot)}(T^{j_{i}}(D_{1}))=T^{\Gamma({j_{i}})-j_{i}}(T^{j_{i}}D_{1})=T^{\Gamma(j_{i})}(D_{1})\subseteq B.
$$
Note $T^{p(\cdot)}$ simply moves $x$ up the requisite number of levels in the tower as $T^{\Gamma(j_{i})}(D_{1})$ lies above $T^{j_{1}}(D_{1})$ in the column by assumption. This finishes the first case.

\textbf{Case $2$: $\Gamma(j_{i})<j_{i}$.}
In this case we see that we must map an $A$ level into a $B$ level which is below it in our column. In this case we can't move down the tower as $p$ must be positively valued. To this end let $T^{\lambda(\cdot)}:T^{j_{i}}(D_{1})\rightarrow T^{j_{i}}(D_{1})$ be the first return map where recall, 
$$
\lambda(x)=\inf\{n>0:T^{n}x\in T^{j_{i}}(D_{1})\}.
$$
The map $\lambda$ is well defined by virtue of $T^{j_{1}}D_{1}$ being clopen and $T$ minimal. Moreover, one can see that $\lambda$ is continuous, hence $\lambda$ is finitely valued as $T^{j_{1}}(D_{1})$ is compact. Furthermore, it is well known that $T^{\lambda}:T^{j_{1}}(D_{1})\rightarrow T^{j_{1}}(D_{1})$ is a homeomorphism; if we let $S=T^{\Gamma(j_{i})-j_{i}}\circ T^{\lambda}$ we have that $S:T^{j_{i}}(D_{1})\rightarrow T^{\Gamma(j_{i})}$ is a homeomorphism and the resulting $p$ function on $T^{j_{i}}(D_{1})$ is
$$
p(x)=\lambda(x)-(\Gamma(j_{i})-j_{i}).
$$
Thus, all that is left to show is that $p$ is a positive function. However, let 
$$
\lambda(T^{j_{i}}(D_{1}))=\{t_{1},t_{2},\dots,t_{n}\}.
$$
Observe points must traverse the tower in a specified order, thus for each $\ell\in\{1,2,\dots, n\}$ we must have that $t_{\ell}\ge h_{1}$, hence for each $\ell,\, t_{\ell}-(\Gamma(j_{i})-j_{i})>0$. Therefore, we have found our $S:T^{j_{i}}(D_{1})\rightarrow T^{\Gamma(j_{i})}(D_{1})$ of the form $S(x)=T^{p(x)}(x)$, where $p:T^{j_{i}}(D_{1})\rightarrow\Z^{+}$ as desired.

Continuing for each $i$, and then for each column we see that we define $p$ on all of $A$. Furthermore, it is clear that $T^{p(\cdot)}$ is a continuous surjection from $A$ onto its image in $B$, as $T^{p(\cdot)}$ is a homeomorphism on each level of $A$. To see that $T^{p(\cdot)}$ is injective, hence a homeomorphism, observe that $T^{p(\cdot)}$ is a homeomorphism when restricted to any $A$ level in any column in the Kakutani-Rokhlin tower partition. Moreover, the $T^{p(\cdot)}$ image of any two distinct, hence disjoint, $A$ levels is again disjoint. Finally, as all columns of the Kakutani-Rokhlin tower partition are disjoint $T^{p(\cdot)}$ maintains its injectivity and is therefore a homeomorphisms from $A$ onto its image in $B$.
\end{proof}
We now immediately use Lemma~\ref{pkey lemma} to prove the final lemma needed in order to prove our key lemma.
\begin{lemma}\label{induction}
	Let $(X,T)$ be a minimal Cantor system, and let $A,B\subseteq X$ be non-empty, disjoint, clopen subsets of $X$ with
	$$
	\mu(A)=\mu(B)
	$$ 
	for every $\mu\in M(X,T)$. Moreover, fix $x\in A$, $y\in B$ and let $\varepsilon>0$ be given. Then there exists clopen sets $A_{1}\subseteq A,\, B_{1}\subseteq B$ with the following properties:
	\begin{enumerate}
		\item $x\in A_{1}$ and $y\in B_{1}$
		\item $\diam(A_{1})<\varepsilon,\, \diam(B_{1})<\varepsilon$
		\item For every $\mu\in M(X,T),\,\mu(A_{1})=\mu(B_{1})$, $ \mu(A_{1})<\dfrac{\mu(A)}{2}$, $\mu(B_{1})<\dfrac{\mu(B)}{2}$
		\item There exists $p:A\backslash A_{1}\rightarrow\Z^{+}$ such that $T^{p(\cdot)}:A\backslash A_{1}\rightarrow B\backslash B_{1}$ is a homeomorphism.
	\end{enumerate}
\end{lemma}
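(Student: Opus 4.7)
The plan is to use a refined Kakutani--Rokhlin partition together with the $D$-simplex subset condition of $M(X,T)$ to produce clopen neighborhoods $A_1 \ni x$ and $B_1 \ni y$ of equal measure, and then to adapt the construction of Lemma~\ref{pkey lemma} column-by-column in order to obtain a bijective (not merely injective) homeomorphism $T^{p(\cdot)}:A\setminus A_1 \to B\setminus B_1$.

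First I would fix a uniform scale. Since $A, B$ are nonempty clopen and every $\mu \in M(X,T)$ has full support, the maps $\mu \mapsto \mu(A), \mu(B)$ are continuous and strictly positive on the compact set $M(X,T)$, so there is $c>0$ with $\mu(A), \mu(B) > 2c$ for every $\mu$. Lemma~\ref{clopen1} then supplies $\delta \in (0, \varepsilon)$ such that any set of diameter less than $\delta$ has $\mu$-measure less than $c$ for every $\mu$. Using Theorem~\ref{K-R} and Proposition~\ref{K-R refine}, I pick a Kakutani--Rokhlin partition $\mathscr{P}=\{T^{j}D_i : 1\le i\le t,\, 0\le j<h_i\}$ refining $\{A, B, (A\cup B)^{c}\}$ whose levels all have diameter less than $\delta$ (hence $\mu$-measure less than $c$) and whose heights $h_i$ are so large that every column contains both $A$-levels and $B$-levels. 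Let $L_x \subseteq A$ and $L_y \subseteq B$ be the levels containing $x$ and $y$, and let $i_x, i_y$ be their columns.

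The combinatorial heart of the proof is as follows. Writing $|J_i|, |K_i|$ for the numbers of $A$-levels and $B$-levels in column $i$ and setting $d_i = |J_i| - |K_i|$, the hypothesis $\mu(A) = \mu(B)$ gives $\sum_i d_i \mu(D_i) = 0$ for every $\mu$. I choose non-negative integers $a_i \le |J_i|$ and $b_i \le |K_i|$ satisfying $a_i - b_i = d_i$, $a_{i_x} \ge 1$ and $b_{i_y} \ge 1$; concretely, take $a_i = \max(d_i, 0)$ and $b_i = \max(-d_i, 0)$, bumped by one in columns $i_x$ and $i_y$ if needed. Letting $A_1$ be the union of $a_i$ selected $A$-levels per column including $L_x$, and $B_1$ defined analogously including $L_y$, the identity $a_i - b_i = d_i$ together with $\sum_i d_i \mu(D_i) = 0$ forces $\mu(A_1) = \mu(B_1)$ for every $\mu$, and trivially $x \in A_1, y \in B_1$. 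With $\mathscr{P}$ fine enough, $\mu(A_1), \mu(B_1) < c < \mu(A)/2$, giving clause~(3). For the diameter requirement of clause~(2), I use the subset condition of $M(X,T)$ to exchange each level of $A_1$ lying outside $L_x$ for a clopen subset of $L_x$ of matching $\mu$-measure (and similarly for $B_1$ inside $L_y$), concentrating $A_1 \subseteq L_x$ and $B_1 \subseteq L_y$, each of diameter less than $\delta < \varepsilon$; this is where further refinement of $\mathscr{P}$ is needed so that $\mu(A_1) < \mu(L_x)$ and the exchange is measure-feasible.

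For clause~(4), I re-refine $\mathscr{P}$ (by Proposition~\ref{K-R refine}) so that the now-concentrated $A_1, B_1$ are unions of whole levels, and the combinatorial construction above guarantees that in each new column the $(A\setminus A_1)$-levels and $(B\setminus B_1)$-levels are equal in number. Choosing a per-column level-bijection $\Gamma_i$, I define $p: A\setminus A_1 \to \Z^{+}$ following the two cases of Lemma~\ref{pkey lemma}: upward by level-difference, downward by the first-return map to the source level composed with $T^{\Gamma_i(j)-j}$, with positivity guaranteed by $h_i$ being large. As in Lemma~\ref{pkey lemma}, $T^{p(\cdot)}$ is continuous, injective on each level, with disjoint level-images; column-by-column bijectivity promotes this injection to a homeomorphism onto $B\setminus B_1$. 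The main obstacle is the coordination step: preserving $\mu(A_1) = \mu(B_1)$, concentrating $A_1 \subseteq L_x, B_1 \subseteq L_y$, containing the points $x, y$, and maintaining column-by-column level-count balance all at once. This is precisely where the $D$-simplex subset condition of $M(X,T)$ is used essentially.
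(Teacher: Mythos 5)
Your combinatorial setup is sound as far as it goes: refining a tall Kakutani--Rokhlin partition by $\{A,B,(A\cup B)^{c}\}$ and using $\mu(A)=\mu(B)\Rightarrow\sum_{i}(|J_{i}|-|K_{i}|)\mu(D_{i})=0$ to select level-collections of equal measure is a legitimate way to force $\mu(A_{1})=\mu(B_{1})$. The proof breaks at the concentration step, and as stated it is not repairable. You must keep $L_{x}\subseteq A_{1}$ to guarantee $x\in A_{1}$, so $\mu(A_{1})\ge\mu(L_{x})$ for every $\mu$; but your exchange argument needs to trade every level of $A_{1}$ other than $L_{x}$ for a measure-matching clopen subset of $L_{x}$, which requires unoccupied room inside $L_{x}$. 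The feasibility condition you yourself state, $\mu(A_{1})<\mu(L_{x})$, can never hold once $L_{x}\subseteq A_{1}$. Without concentration, $A_{1}$ is a union of levels scattered across many columns and there is no control on its diameter, so clause (2) fails.

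Even granting some fix, two further problems remain. First, the re-refinement you invoke for clause (4) destroys the per-column count balance you rely on: once $A_{1}$ is a clopen subset of $L_{x}$ and the tower is refined so that $A_{1},B_{1}$ are unions of new levels, each old column splits into many new ones, and the numbers of $(A\setminus A_{1})$-levels and $(B\setminus B_{1})$-levels in a given new column need not be equal --- the identity $\sum_{i}d_{i}\mu(D_{i})=0$ is a statement about measures and does not yield integer equality column by column. Second, your claim that $\mu(A_{1})<c$ once $\mathscr{P}$ is ``fine enough'' does not follow from the levels being small: $\mu(A_{1})$ is essentially $\sum_{d_{i}>0}d_{i}\mu(D_{i})$, a global discrepancy, and bounding it requires a uniform two-sided Birkhoff-average argument, not small level diameters. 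The paper's proof avoids per-column bookkeeping entirely: it chooses nested small clopen neighborhoods $A_{\frac{1}{3}}\ni x$ and $B_{1}\ni y$ with $\mu(B_{1})<\mu(A_{\frac{1}{3}})$ for all $\mu$, applies Lemma~\ref{pkey lemma} to $T$ to map $A\setminus A_{\frac{1}{3}}$ homeomorphically \emph{into} $B\setminus B_{1}$, and then applies Lemma~\ref{pkey lemma} again to $T^{-1}$ to carry the uncovered remainder $U_{1}\subseteq B\setminus B_{1}$ back onto a clopen piece of $A_{\frac{1}{3}}$ disjoint from a smaller neighborhood of $x$; deleting that piece from $A_{\frac{1}{3}}$ yields $A_{1}$ with $\mu(A_{1})=\mu(B_{1})$ and upgrades the assembled map to a bijection onto $B\setminus B_{1}$. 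If you want to salvage your approach, you need some analogue of that two-sided step to convert ``homeomorphism onto its image'' into ``homeomorphism onto $B\setminus B_{1}$.''
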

\begin{proof}
Let $A$ and $B$ be non-empty, disjoint, clopen subsets of $X$, and fix $x\in A$ and $y\in B$, and let $\varepsilon>0$ be given. Recall that every measure $\mu\in M(X,T)$ is full, i.e. gives positive measure to non-empty open sets, whence $\int\mathbbm{1}_{A}d\mu>0$. Let
$$
\alpha=\inf\left\{\int_{X}\mathbbm{1}_{A}\,d\mu:\mu\in M(X,T)\right\}.
$$
Since for every $\mu\in M(X,T)$, $\mu(A)=\mu(B)$ we also have that
$$
\alpha=\inf\left\{\int_{X}\mathbbm{1}_{B}\,d\mu:\mu\in M(X,T)\right\}.
$$
Observe, $\mathbbm{1}_{A}$ is continuous as $A$ is clopen and since $M(X,T)$ is compact in the weak$^{*}$ topology the above infimum is achieved; whence $\alpha>0$.
By Lemma $\ref{clopen1}$ there exists $\delta_{\alpha}>0$ and such that for every $K\in\mathscr{B}(X)$ and for every $\mu\in M(X,T)$
\begin{center}
$\diam(K)<\delta_{\alpha}\Rightarrow\mu(K)<\dfrac{\alpha}{2}$.
\end{center}
Find clopen set $A_{\frac{1}{3}}\subsetneqq A,$ such that
$$
x\in A_{\frac{1}{3}} \text{ and } \diam(A_{\frac{1}{3}})<\min\{\delta_{\alpha},\varepsilon\}.
$$
Let
$$
\varepsilon_{1}=\inf\left\{\int_{X}\mathbbm{1}_{A_{\frac{1}{3}}}\,d\mu:\mu\in M(X,T)\right\}>0
$$
and use Lemma~\ref{clopen1} to obtain $\delta_{1}>0$ such that for every $K\in\mathscr{B}(X)$ and for every $\mu\in M(X,T)$
$$
\diam(K)<\delta_{1}\Rightarrow\mu(K)<\varepsilon_{1}.
$$
Find clopen subset $B_{1}\subseteq B$ such that
$$
y\in B_{1}\text{ and }\diam(B_{1})<\min\{\varepsilon,\delta_{1},\delta_{\alpha}\};
$$
thus we have for all $\mu\in M(X,T)$ we have 
\begin{align*}
\mu(B_{1})<\varepsilon_{1}<\mu(A_{\frac{1}{3}})\Rightarrow\mu(A\backslash A_{\frac{1}{3}})<\mu(B\backslash B_{1}).
\end{align*}
Apply Lemma $\ref{pkey lemma}$ to $T$ and get $p_{1}:A\backslash A_{\frac{1}{3}}\rightarrow\Z^{+}$ such that $S:A\backslash A_{\frac{1}{3}}\rightarrow B\backslash B_{1}$, defined by $S(x)=T^{p_{1}(x)}(x)$, is a homeomorphism onto its image. Then, $B\backslash S(A\backslash A_{\frac{1}{3}})=B_{1}\disunion\ U_{1}$ where $U_{1}$ is a non-empty clopen set and $B_{1}$ and $U_{1}$ are disjoint. Furthermore, for every $\mu\in M(X,T)$ we have that
\begin{equation}
\mu(A_{\frac{1}{3}})=\mu(B_{1})+\mu(U_{1})
\end{equation}
We can visualize this as below.
$$
\begin{tikzpicture}
\filldraw[orange, even odd rule] 
(0,0) rectangle (4,4) node[black][right]{$A$} (.5,.5) rectangle (2,2) node[black][midway]{$x$};
\node at (2,2)[black][right]{$A_{\frac{1}{3}}$};
\filldraw[orange, even odd rule]
(6,0) rectangle (10,4) node[black][right]{$B$} (6.5,.5) rectangle (7.5,1.5) node[black][midway]{$y$} (8,3) rectangle (8.75,2.25);
\node at (7.5,1.5) [black][right]{$B_{1}$};
\node at (8.75,3)[black][right]{$U_{1}$};
\draw[->] (4,2)--(6,2) node[midway,above]{$S=T^{p_{1}(\cdot)}$};
\node at (0,3.5)[right]{$A\backslash A_{\frac{1}{3}}$};
\node at (6,3.5)[right]{$B\backslash S(A\backslash A_{\frac{1}{3}})$};
\end{tikzpicture}
$$
Here is the intertwining nature of the proof; in order to extend to $S$ to more of $A$, we apply Lemma~\ref{pkey lemma} to $T^{-1}$ with respect to the clopen sets $U_{1}$ and $A_{\frac{1}{3}}$ with a small neighborhood of $x$ removed. By $(2)$ above we have for every $\mu\in M(X,T)$
$$
\mu(U_{1})<\mu(A_{\frac{1}{3}})
$$
and let
$$
\varepsilon_{2}=\inf\left\{\int_{X}(\mathbbm{1}_{A_{\frac{1}{3}}}-\mathbbm{1}_{U_{1}})d\mu:\mu\in M(X,T)\right\}>0.
$$
By Lemma $\ref{clopen1}$ there exists $\delta_{2}>0$ such that for every $\mu\in M(X,T)$ and every $K\in\mathscr{B}(X)$ we have,
$$
\diam(K)<\delta_{2}\Rightarrow \mu(K)<\varepsilon_{2}.
$$
Find clopen set $A_{\frac{2}{3}}\subsetneqq A_{\frac{1}{3}}$ such that 
$$
x\in A_{\frac{2}{3}}\text{ and } \diam(A_{\frac{2}{3}})<d_{2}=\min\left\{\delta_{2},\diam(A_{\frac{1}{3}})\right\}.
$$
Thus for all $\mu\in M(X,T)$ we have that
\begin{align*}
\mu(A_{\frac{1}{3}}\backslash A_{\frac{2}{3}})&=\mu(A_{\frac{1}{3}})-\mu(A_{\frac{2}{3}}) \\
&>\mu(A_{\frac{1}{3}})-(\mu(A_{\frac{1}{3}})-\mu(U_{1})) \\
&=\mu(U_{1}).
\end{align*}
Applying Lemma $\ref{pkey lemma}$ to $T^{-1}$ and $U_{1}$, recall by Proposition $\ref{M(X,T)=M(X,T^{-1})}$ we have $M(X,T)=M(X,T^{-1})$, we get $\hat{p}_{2}:U_{1}\rightarrow\Z^{+}$ such that$ (T^{-1})^{\hat{p}_{2}(\cdot)}:U_{1}\rightarrow A_{\frac{1}{3}}\backslash A_{\frac{2}{3}}$ is a homeomorphsim onto its image in $A_{\frac{1}{3}}\backslash A_{\frac{2}{3}}$.
$$
\begin{tikzpicture}
\filldraw[orange, even odd rule] (0,0) rectangle (2,2)
(1.25,1.25) rectangle (2,.5) node[midway,black]{$x$} (0,2) rectangle (.5,1.5) node[black,right]{$L_{1}$};
\node at (1.35,.5) [black,left]{$A_{\frac{2}{3}}$};
\draw (4,0) rectangle (5,1) node[midway,black]{$y$};
\node at (5,0) [black,right]{$B_{1}$};
\node at (2,2) [black,right]{$A_{\frac{1}{3}}$};
\filldraw[orange] (4,1.5) rectangle (5,2) node[right,black]{$U_{1}$};
\draw[->] (4,1.75) -- (2,1.65) node[midway,below]{$(T^{-1})^{\hat{p}_{2}(\cdot)}$};
\end{tikzpicture}
$$
We now use $\hat{p_{2}}$ to define $p_{2}:(T^{-1})^{\hat{p_{2}}}(U_{1})\rightarrow\Z^{+}$ by
$$
p_{2}((T^{-1})^{\hat{p}_{2}(z)}(z))=\hat{p}_{2}(z).
$$
Observe, for any $z\in\ U_{1}$ we have
$$
T^{p_{2}(z)}(T^{-p_{2}(z)}(z))=z
$$ 
and similarly the reverse composition is the identity, whence $T^{p_{2}(\cdot)}$ is not only a bijection, but the inverse function to $(T^{-1})^{\hat{p}_{2}(\cdot)}$, and so is a homeomorphism itself.
	
This intertwining allows us to map more of $A$ onto $B$ using only positive powers of $T$ and also to ensure that the diameter of $B_{1}$ is small. As was the case with $p_{1}$ we see that 
$$
A_{\frac{1}{3}}\backslash T^{-p_{2}(\cdot)}(U_{1})=A_{\frac{2}{3}}\disunion L_{1}
$$
where $L_{1}$ is a clopen subset of $A_{\frac{1}{3}}$ with $A_{\frac{2}{3}}$ and $L_{1}$ being disjoint. Again we have the following equality for every $\mu\in M(X,T)$
$$
\mu(B_{1})=\mu(A_{\frac{2}{3}})+\mu(L_{1}).
$$
Thus, by defining
$$
A_{1}=A_{\frac{2}{3}}\disunion L_{1}
$$
we have $A_{1}$ and $B_{1}$ as desired.
\end{proof}
We will use induction on our previous lemma to prove our key Lemma.
\begin{lemma}\label{Key Lemma}
Let $(X,T)$ be a minimal Cantor system and let $A,B$ be non-empty disjoint, clopen subsets of $X$. If for all $\mu\in M(X,T),\, \mu(A)=\mu(B)$, then there exists $p:A\rightarrow\Z^{+}$ such that $S:A\rightarrow B$, defined as $S(x)=T^{p(x)}(x)$, is a homeomorphism onto $B$.
\end{lemma}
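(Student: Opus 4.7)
The strategy is to iterate Lemma~\ref{induction} so as to define $p$ on all but one point of $A$, arranging in advance that the remaining point maps under a positive iterate of $T$ to the corresponding exceptional point on the $B$ side. As a first step, I would exploit the minimality of $T$: pick any $x^{*}\in A$, and since the forward orbit of $x^{*}$ is dense and $B$ is a non-empty open set disjoint from $A$, there exists a positive integer $N$ with $y^{*}:=T^{N}(x^{*})\in B$. This $N$ will eventually play the role of $p(x^{*})$.

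I would then build, inductively, nested non-empty clopen sets $A=A_{0}\supsetneq A_{1}\supsetneq A_{2}\supsetneq\cdots$ in $A$ and $B=B_{0}\supsetneq B_{1}\supsetneq B_{2}\supsetneq\cdots$ in $B$, together with maps $p_{n}:A_{n}\backslash A_{n+1}\to\Z^{+}$. Given $A_{n},B_{n}$ satisfying $x^{*}\in A_{n}$, $y^{*}\in B_{n}$, and $\mu(A_{n})=\mu(B_{n})$ for every $\mu\in M(X,T)$, apply Lemma~\ref{induction} to the pair $(A_{n},B_{n})$ with the points $x^{*},y^{*}$ and $\varepsilon=1/(n+1)$. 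This produces clopen $A_{n+1}\subsetneq A_{n}$ and $B_{n+1}\subsetneq B_{n}$ that again meet all the inductive hypotheses (they contain $x^{*}$ and $y^{*}$ respectively, and satisfy $\mu(A_{n+1})=\mu(B_{n+1})$), together with $p_{n}:A_{n}\backslash A_{n+1}\to\Z^{+}$ such that $T^{p_{n}(\cdot)}:A_{n}\backslash A_{n+1}\to B_{n}\backslash B_{n+1}$ is a homeomorphism. Because $\diam(A_{n+1})<1/(n+1)$ and $\diam(B_{n+1})<1/(n+1)$, and these are nested compact sequences containing $x^{*}$ and $y^{*}$, the intersections collapse to $\Intersect_{n}A_{n}=\{x^{*}\}$ and $\Intersect_{n}B_{n}=\{y^{*}\}$.

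Finally I would glue: define $p:A\to\Z^{+}$ by $p(x)=p_{n}(x)$ when $x\in A_{n}\backslash A_{n+1}$, and $p(x^{*})=N$. The sets $\{A_{n}\backslash A_{n+1}:n\ge 0\}$ partition $A\backslash\{x^{*}\}$, so $p$ is well defined, and $S(x):=T^{p(x)}(x)$ is a bijection from $A$ onto $B$, since each $T^{p_{n}(\cdot)}$ bijects $A_{n}\backslash A_{n+1}$ onto $B_{n}\backslash B_{n+1}$ and $S(x^{*})=y^{*}$. Continuity of $S$ at every $x\ne x^{*}$ is immediate because $S$ locally agrees with a continuous $T^{p_{n}(\cdot)}$ on a clopen neighborhood. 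For continuity at $x^{*}$, whenever $z_{k}\to x^{*}$, eventually $z_{k}\in A_{n}$ for each fixed $n$, so $S(z_{k})\in B_{n}$; since $\diam(B_{n})\to 0$ and $y^{*}\in B_{n}$, we get $S(z_{k})\to y^{*}=S(x^{*})$. Continuity of $S^{-1}$ follows by the symmetric argument. The principal obstacle in this plan is the very first step, namely using minimality to place $y^{*}$ in the forward orbit of $x^{*}$; without this, the single point left over by the inductive scheme would admit no legitimate value of $p$ inside $\Z^{+}$, which is precisely where the topological setting diverges from the measurable one in \cite{AOW}.
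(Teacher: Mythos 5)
Your proposal is correct and follows essentially the same route as the paper's own proof: fix $x^{*}\in A$, use minimality to get $y^{*}=T^{N}(x^{*})\in B$, iterate Lemma~\ref{induction} to shrink nested clopen pairs onto $\{x^{*}\}$ and $\{y^{*}\}$ while defining $p$ on the successive differences, and finish by setting $p(x^{*})=N$ and checking continuity at the exceptional point via the vanishing diameters. You also correctly identify the essential role of minimality in handling the leftover point, which is exactly the point the paper relies on.
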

\begin{proof}
	Let $A$ and $B$ be non-empty, disjoint, clopen subsets of $X$ and $x\in A$. Since $T$ is minimal there exists $n\in\Z^{+}$ such that $T^{n}(x)\in B$, let $y=T^{n}(x)$. We will use induction to find a decreasing sequences of sets $\{A_{n}\}_{n\ge 0}$ and $\{B_{n}\}_{n\ge 0}$ such that
	$$
	\Intersect_{n\ge 0}A_{n}=\{x\} \text{ and }\Intersect_{n\ge 0}B_{n}=\{y\}
	$$
	all while defining $S$ on larger and larger parts of $A$. Let $\varepsilon_{1}=\min\{\diam(A),\diam(B), 1\}$, then using Lemma~\ref{induction} find clopen subsets $A_{1}$ and $B_{1}$ such that
	\begin{enumerate}
		\item $x\in A_{1}, y\in B_{1}$
		\item $\diam(A_{1})<\varepsilon_{1},\diam(B_{1})<\varepsilon_{1}$
		\item for every $\mu\in M(X,T),\, \mu(A_{1})=\mu(B_{1})$ and $\mu(A_{1})<\dfrac{\mu(A)}{2},\, \mu(B_{1})<\dfrac{\mu(B)}{2}$
		\item Find $p_{1}:A\backslash A_{1}\rightarrow\Z^{+}$ such that
		$$
		S_{1}=T^{p_{1}(\cdot)}:A\backslash A_{1}\rightarrow B\backslash B_{1}
		$$
		is a homeomorphism.
	\end{enumerate} 
	Now having defined $A_{n}\subseteq A_{n-1}$ and $B_{n}\subseteq B_{n-1}$ with $x\in A_{n},\, y\in B_{n}$ and $\diam(A_{n})<\varepsilon_{n},\, \diam(B_{n})<\varepsilon_{n}$ where 
	$$
	\varepsilon_{n}=\min\left\{\diam(A_{n-1}),\diam(B_{n-1}),\frac{1}{n}\right\}.
	$$
	Moreover, we also have for all $\mu\in M(X,T),$
	$$
	\mu(A_{n})=\mu(B_{n}) \text{ and } \mu(A_{n})<\dfrac{\mu(A_{n-1})}{2},\, \mu(B_{n-1})<\dfrac{B_{n-1}}{2} 
	$$
	and $p_{n}:A_{n-1}\backslash A_{n}\rightarrow\Z^{+}$ such that
	$$
	S_{n}:A_{n-1}\backslash A_{n}\rightarrow B_{n-1}\backslash B_{n}
	$$
	is a homeomorphism. Use Lemma~\ref{induction} with $\varepsilon_{n+1}=\min\{\diam(A_{n}),\diam(B_{n}),\frac{1}{n+1}\}$ to find clopen sets $A_{n+1}$ and $B_{n+1}$ such that 
	\begin{enumerate}
		\item $x\in A_{n+1}, y\in B_{n+1}$
		\item $\diam(A_{n+1})<\varepsilon_{n+1},\diam(B_{n+1})<\varepsilon_{n+1}$
		\item For every $\mu\in M(X,T),\, \mu(A_{n+1})=\mu(B_{n+1})$ and $\mu(A_{n+1})<\dfrac{\mu(A_{n})}{2},\, \mu(B_{n+1})<\dfrac{\mu(B_{n})}{2}$
		\item Find $p_{n+1}:A_{n}\backslash A_{n+1}\rightarrow\Z^{+}$ such that
		$$
		S_{n+1}=T^{p_{n+1}(\cdot)}:A_{n}\backslash A_{n+1}\rightarrow B_{n}\backslash B_{n+1}
		$$
		is a homeomorphism.
	\end{enumerate}
Therefore, by induction we have defined $p:A\backslash\{x\}$ by taking $$p(x)=p_{n}(x)$$ where $x\in A_{n}\backslash A_{n+1}$.
Moreover, we observe at this point $T^{p(\cdot)}:A\backslash\{x\}\rightarrow B\backslash\{y\}$ is a homeomorphism. We extend $p$ to all of $A$ by defining $p(x)=n$. Consequently $T^{p(\cdot)}$ is a bijection on $A$.

All that is left to show is that $T^{p(\cdot)}$ is continuous on $A$. By construction $T^{p(\cdot)}$ is continuous at all points in $A$ less our exceptional point $x$. Let $\varepsilon>0$ be given. Then by the construction there exists an $n$ such that $B_{n}\subseteq B_{\varepsilon}(y)$; thus $T^{p(\cdot)}(A_{n+1})\subseteq B_{n}$, as $T^{p(x)}(x)=y$. Hence taking $\delta>0$ such that the ball of radius $\delta$ about $x$, $B_{\delta}(x)\subseteq A_{n+1}$ we have that $T^{p(\cdot)}$ is continuous at $x$, whence is continuous on all of $A$. Therefore, we have defined $p$ in such a way that the map
$$
T^{p(\cdot)}:A\rightarrow B
$$
is a homeomorphism as desired.
\end{proof}
An immediate corollary of this lemma, in conjunction with Proposition~\ref{Simplex inequality} is the following lemma. 
\begin{lemma}\label{cpartition}
Let $(X,T)$ be a minimal Cantor system and $A,B\subseteq X$ clopen subsets such that $A\intersect B=\emptyset$. If for all $\mu\in M(X,T),\, \mu(A)=\mu(B)$, then for any clopen partition of $A$, say $A=\disUnion_{i=1}^{n}A_{i}$, there exists clopen sets $B_{i}\subseteq B$ with $B=\disUnion_{i=1}^{n}B_{i}$ such that for all $\mu\in M(X,T)$ and for each $i$ we have
$$
\mu(A_{i})=\mu(B_{i})
$$
\end{lemma}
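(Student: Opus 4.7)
The plan is to invoke the Key Lemma (Lemma~\ref{Key Lemma}) directly on the pair $A,B$ to obtain $p:A\rightarrow\Z^{+}$ such that $S(x)=T^{p(x)}(x)$ is a homeomorphism of $A$ onto $B$, and then simply define $B_{i}:=S(A_{i})$. Almost everything will then be formal.

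First I would verify the topological side. Because $S$ is a homeomorphism from $A$ onto $B$ and $A_{i}$ is clopen in $A$ (hence in $X$), the image $B_{i}=S(A_{i})$ is open in $B$; since $B_{i}=B\backslash\displaystyle\bigsqcup_{j\neq i}S(A_{j})$ it is also closed, and the $B_{i}$'s partition $B$.

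Next I would verify the measure equality, which is a reprise of the calculation inside the proof of Proposition~\ref{Simplex inequality}. For each $i$, decompose according to the level sets of $p$:
$$
A_{i}=\disUnion_{n\in\Z^{+}}\bigl(A_{i}\intersect p^{-1}(\{n\})\bigr),
$$
where each piece is Borel (indeed clopen in $A$, by the same aperiodicity argument used for the lower semicontinuity of $p$: a limit of points with $p=n$ satisfies $S(x)=T^{n}(x)$, forcing $p(x)=n$ since $T$ is aperiodic). Applying $S$, which agrees with $T^{n}$ on $p^{-1}(\{n\})$, and using $T$-invariance of $\mu$ gives
$$
\mu(B_{i})=\sum_{n\in\Z^{+}}\mu\bigl(T^{n}(A_{i}\intersect p^{-1}(\{n\}))\bigr)=\sum_{n\in\Z^{+}}\mu\bigl(A_{i}\intersect p^{-1}(\{n\})\bigr)=\mu(A_{i}),
$$
for every $\mu\in M(X,T)$, as desired.

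I do not anticipate a genuine obstacle: all the hard work has been packed into the Key Lemma, and the remaining content is the observation that a $T$-invariant measure does not distinguish $x$ from $S(x)=T^{p(x)}(x)$ once one slices by the level sets of $p$. The only mild point to watch is that the $B_{i}$'s are genuinely clopen rather than just Borel, which is why it is essential to use the full homeomorphism $S:A\rightarrow B$ produced by Lemma~\ref{Key Lemma} rather than merely a measurable matching.
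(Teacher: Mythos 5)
Your proposal is correct and takes essentially the same route as the paper, which derives this lemma as an immediate corollary of Lemma~\ref{Key Lemma} combined with the measure computation of Proposition~\ref{Simplex inequality} — precisely the two ingredients you use (the homeomorphism $S=T^{p(\cdot)}:A\rightarrow B$ and the level-set decomposition showing $\mu(S(E))=\mu(E)$). Your added remarks on clopenness of the $B_{i}$ and measurability of the pieces $A_{i}\intersect p^{-1}(\{n\})$ just make explicit what the paper leaves to the reader.
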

We will use this lemma in the proof of the main theorem which is soon to follow.
We will make use of the following definition due to Dahl.
\begin{definition}[H. Dahl]
Let $K\subseteq M(X)$, where $X$ is a Cantor set, be a Choquet simplex consisting of non-atomic, Borel, probability measures. We say that $K$ is a \textbf{dynamical simplex (D-simplex)} if it satisfies the following two conditions:
\begin{enumerate}
\item For clopen subsets $A$ and $B$ of $X$ with $\mu(A)<\mu(B)$ for all $\mu\in K$, there exists a clopen subset $B_{1}\subseteq B$ such that $\mu(A)=\mu(B_{1})$ for all $\mu\in K$.
\item If $\nu,\mu\in\partial_{e}K,\, \nu\neq\mu,$ then $\mu$ and $\nu$ are mutually singular.
\end{enumerate}
\end{definition}
It is well know that condition $(2)$ is satisfied by every $M(X,T)$ for any continuous map on a compact metric space $X$. Furthermore, thanks to Glasner and Weiss we have the following theorem. 
\begin{theorem}\cite[Lemma $2.5$]{Glasner-Weiss}\label{D-Simplex} Let $(X,T)$ be a minimal Cantor system and $M(X,T)$ its associated Choquet simplex of $T-$invariant measures. Then $M(X,T)$ is a $D-$simplex.
\end{theorem}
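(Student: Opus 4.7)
The plan is to verify each defining property of a $D$-simplex separately, leveraging the machinery already built up in the paper. The fact that $M(X,T)$ is a Choquet simplex is classical (see \cite[Chapter 6]{Walters}), so it remains to check: (a) every $\mu\in M(X,T)$ is non-atomic and fully supported; (b) distinct extreme points of $M(X,T)$ are mutually singular; and (c) the subset condition.

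For (a), full support follows because the support of any $\mu\in M(X,T)$ is a nonempty closed $T$-invariant subset of $X$, hence equals $X$ by minimality. Non-atomicity follows because if $\mu(\{x\})>0$ for some $x$, then $T$-invariance would force $\mu(\{T^n x\})=\mu(\{x\})$ for all $n\in\Z$, and the orbit of $x$ is infinite (minimality on an uncountable Cantor space), contradicting $\mu(X)=1$. For (b) I would quote the standard fact that the extreme points of $M(X,T)$ are precisely the ergodic $T$-invariant measures, and that two distinct ergodic measures are mutually singular: if $\mu\neq\nu$ are ergodic, pick a continuous function $f$ with $\int f\,d\mu\neq\int f\,d\nu$; by the Birkhoff ergodic theorem the sets on which the averages of $f$ converge to $\int f\,d\mu$ and to $\int f\,d\nu$ are disjoint measurable sets of full $\mu$- and $\nu$-measure respectively.

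The main step is (c), the subset condition, and this is exactly where Lemma~\ref{pkey lemma} of this paper does the heavy lifting. Given clopen $A,B\subseteq X$ with $\mu(A)<\mu(B)$ for every $\mu\in M(X,T)$, I first reduce to the disjoint case by writing
\[
A=(A\cap B)\disunion(A\setminus B),\qquad B=(A\cap B)\disunion(B\setminus A),
\]
so that $A\setminus B$ and $B\setminus A$ are disjoint clopen sets with $\mu(A\setminus B)<\mu(B\setminus A)$ for every $\mu\in M(X,T)$. Applying Lemma~\ref{pkey lemma} yields $p\colon A\setminus B\to\Z^{+}$ such that $S=T^{p(\cdot)}\colon A\setminus B\to B\setminus A$ is a homeomorphism onto its image. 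From the construction in Lemma~\ref{pkey lemma}, $p$ is constant on each of finitely many clopen pieces into which $A\setminus B$ is partitioned, so $S(A\setminus B)$ is a finite disjoint union of sets of the form $T^{n}(U)$ with $U$ clopen, hence clopen in $B\setminus A$. Setting
\[
B_{1}=(A\cap B)\disunion S(A\setminus B)\subseteq B,
\]
which is clopen, the computation from Proposition~\ref{Simplex inequality} (breaking $A\setminus B$ up by level sets of $p$ and using $T$-invariance of $\mu$) shows $\mu(S(A\setminus B))=\mu(A\setminus B)$ for every $\mu\in M(X,T)$, and therefore $\mu(B_{1})=\mu(A\cap B)+\mu(A\setminus B)=\mu(A)$, as required.

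The main obstacle is concentrated entirely in Lemma~\ref{pkey lemma}, which has already been established, so the remaining work is assembling the reduction to the disjoint case and observing that the image of a locally-constant-exponent power of $T$ applied to a clopen set is clopen. The non-atomicity, full support, and mutual singularity assertions are essentially formal consequences of minimality and ergodic theory, and should go through with no surprises.
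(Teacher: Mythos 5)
Your proposal is correct, but it takes a different route from the paper, which offers no proof at all for this statement: it simply cites \cite[Lemma 2.5]{Glasner-Weiss}, remarking earlier that the $D$-simplex property ``follows immediately from a proof of Lemma 2.5 from Glasner and Weiss.'' You instead assemble a self-contained argument from the paper's own toolkit, and the logic is sound: Lemma~\ref{pkey lemma} is proved independently of the $D$-simplex property (it rests only on Propositions~\ref{K-R height}, \ref{K-R refine}, \ref{clopen}, and \ref{tall tower}), so there is no circularity in invoking it here, and your observations that $p$ is locally constant (constant on levels in Case 1, equal to a continuous finitely-valued return-time expression in Case 2, on a partition refined against $\{A,B,(A\cup B)^{c}\}$) and that injectivity of $S$ plus $T$-invariance gives $\mu(S(A\setminus B))=\mu(A\setminus B)$ exactly mirror the computation in Proposition~\ref{Simplex inequality}. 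What your approach buys is that the reader need not consult Glasner--Weiss at all; what it costs is a small amount of bookkeeping the citation avoids. Two minor points to patch: Lemma~\ref{pkey lemma} requires both sets to be non-empty, so you should dispose of the degenerate case $A\setminus B=\emptyset$ separately (there $A\subseteq B$ and $B_{1}=A$ works outright, and when $A\setminus B\neq\emptyset$ full support forces $B\setminus A\neq\emptyset$); and strictly speaking a finite union of clopen sets is clopen whether or not the pieces are disjoint, so the injectivity of $S$ is needed only for the measure identity, not for clopenness. Neither affects correctness.
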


Theorem $\ref{D-Simplex}$ becomes useful in construction of the speedup which proves $(3)\Rightarrow (1)$. We have enough background to finish the proof of the main theorem. We recall the final portion of the main theorem we have left to prove.
We will show that  

\begin{theorem}Given $(X_{1},T_{1})$ and $(X_{2},T_{2})$ minimal Cantor system. If there exists a homeomorphism $F:X_{1}\rightarrow X_{2}$ such that $F_{*}:M(X_{1},T_{1})\hookrightarrow M(X_{2},T_{2})$ is an injection, then $(X_{2},T_{2})$ is a speedup of $(X_{1},T_{1})$.
\end{theorem}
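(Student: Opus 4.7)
The plan is to reduce to a single underlying Cantor space and then build the speedup by refining Kakutani-Rokhlin towers using the Key Lemma. The homeomorphism $F: X_1 \to X_2$ allows us to transport $T_2$ back to $X_1$: set $T_2' = F^{-1} \circ T_2 \circ F$, a minimal homeomorphism on $X_1$. The injection $F_*: M(X_1,T_1) \hookrightarrow M(X_2,T_2)$ translates, after identifying via $F$, to the inclusion $M(X_1,T_1) \subseteq M(X_1,T_2')$ of subsets of $M(X_1)$. Since $F$ supplies a conjugacy between $(X_2,T_2)$ and $(X_1,T_2')$, it suffices to produce a minimal homeomorphism $S: X_1 \to X_1$ of the form $S(x) = T_1^{p(x)}(x)$ with $p: X_1 \to \mathbb{Z}^+$ such that $(X_1,S)$ is conjugate to $(X_1,T_2')$.

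Next, I apply Theorem~\ref{K-R} to $T_2'$ to obtain a refining sequence of Kakutani-Rokhlin tower partitions $\mathscr{P}(n) = \{(T_2')^{j}(B_{k,n}) : 1 \le k \le t(n),\, 0 \le j < h_{k,n}\}$ whose bases $\bigcup_k B_{k,n}$ nest down to a single point $x_0$ and whose union generates the topology of $X_1$. The crucial observation is that within each column the consecutive levels $(T_2')^j(B_{k,n})$ and $(T_2')^{j+1}(B_{k,n})$ are disjoint, non-empty, clopen sets with identical measure under every $\mu \in M(X_1,T_2')$, and therefore under every $\mu \in M(X_1,T_1)$ by the hypothesis. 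Lemma~\ref{Key Lemma} then yields, for each such pair, a function $p_{k,n,j}$ on $(T_2')^j(B_{k,n})$ making $x \mapsto T_1^{p_{k,n,j}(x)}(x)$ a homeomorphism onto $(T_2')^{j+1}(B_{k,n})$; this is the local replacement of $T_2'$'s tower transition by a positive power of $T_1$.

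The construction then proceeds inductively across stages, using Lemma~\ref{cpartition} to reconcile the Key Lemma applications across successive refinements so that the definition of $p$ extends coherently. At stage $n$ we define $S$ on every non-top level of $\mathscr{P}(n)$; the top levels of $\mathscr{P}(n)$ are deferred and handled at stage $n+1$, where the finer partition splits them into new pieces that, except for those lying in the new tops, acquire definitions by a fresh application of Lemma~\ref{Key Lemma}. In the limit this yields $p: X_1 \setminus \{x_0\} \to \mathbb{Z}^+$, and we extend $p$ to $x_0$ so that $S = T_1^{p(\cdot)}$ becomes continuous there: since the $\mathscr{P}(n)$ generate the topology of $X_1$ and $S$ respects their tower structure at every stage, the value of $S$ at $x_0$ is forced by the nested intersection of column tops.

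The main obstacle is this consistency step: ensuring that the Key Lemma applications at successive refinements agree on overlaps so that $p$ is well-defined and $S$ is a genuine homeomorphism. This will require an intertwining argument in the spirit of the proof of Lemma~\ref{induction}, leaning on the subset condition of the $D$-simplex $M(X_1,T_1)$ to refine subsets while preserving the requisite measure equalities. Once this is in place, the partitions $\mathscr{P}(n)$ serve simultaneously as Kakutani-Rokhlin towers for $S$ and for $T_2'$ with identical column structure and consistent ordering of transitions, so their Bratteli-Vershik representations coincide and $(X_1,S)$ is conjugate to $(X_1,T_2')$, making $(X_2,T_2)$ a speedup of $(X_1,T_1)$ as desired.
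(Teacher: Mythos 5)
Your reduction to $T_2'=F^{-1}\circ T_2\circ F$ on $X_1$ and the observation that consecutive levels of a $T_2'$-tower have equal measure for every $\mu\in M(X_1,T_1)$ (so that Lemma~\ref{Key Lemma} applies with $T=T_1$) are exactly the right starting point, and they match the paper's strategy. But the plan as written founders on the consistency step you flag and defer. If the Kakutani-Rokhlin partitions for $S$ are literally the pulled-back $T_2'$-partitions and $S$ is required to send each level of $\mathscr{P}(n)$ onto the same next level that $T_2'$ does, \emph{at every stage} $n$, then for any $x$ the points $S(x)$ and $T_2'(x)$ lie in the same member of $\mathscr{P}(n)$ for all $n$; since the partitions generate and separate points this forces $S=T_2'$, which is generally not of the form $T_1^{p(\cdot)}$. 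Equivalently: the homeomorphism produced by Lemma~\ref{Key Lemma} from level $j$ onto level $j+1$ will not carry each finer sub-level of $\mathscr{P}(n+1)$ onto the sub-level that $T_2'$ carries it onto, so $\mathscr{P}(n+1)$ is not a tower partition for $S$ and the Bratteli--Vershik matching collapses. The paper escapes this by \emph{not} using the pulled-back towers as the $S$-towers: it builds a separate refining sequence $\mathscr{P}(n)$ in $X_1$ whose levels merely have the \emph{same measures} (for all $\mu\in M(X_1,T_1)$) and the same combinatorial stacking data as the $X_2$-towers, propagates base refinements up each column using $S$ itself (so the refined partitions are automatically $S$-towers), reconciles measures with Lemma~\ref{cpartition} and the $D$-simplex subset condition, and obtains the conjugacy as the limit of the combinatorial matchings $\Phi_n$ --- a new homeomorphism, not the identity and not $F$.

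The second gap is at the exceptional point. The set your construction leaves undefined is the intersection of the column tops, which is $(T_2')^{-1}(x_0)$, not $x_0$, and the forced value of $S$ there is $x_0$. For $S$ to have the form $T_1^{p(\cdot)}$ you need $T_1^{p}\bigl((T_2')^{-1}(x_0)\bigr)=x_0$ for some $p\in\Z^{+}$, and there is no reason for $x_0$ to lie on the forward $T_1$-orbit of $(T_2')^{-1}(x_0)$; minimality gives density of that orbit, not membership. This is why the paper spends considerable effort (the three adjustment steps using nested clopen sets $A_n\downarrow\{x\}$ and $Z_n=T_1^{-1}(A_n)\downarrow\{T_1^{-1}x\}$) forcing the bases of the $X_1$-towers to shrink to a prespecified point $x$ and the tops to shrink to $T_1^{-1}x$; only then does the final extension $p(T_1^{-1}x)=1$ produce a continuous bijection. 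Your proposal needs both of these repairs before it becomes a proof.
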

\begin{proof}
We begin with a sketch of the proof to keep in mind. The idea of the construction is to take a refining sequence of Kakutani-Rokhlin tower partitions in $X_{2}$ and copy them in $X_{1}$ using the homeomorphism $F^{-1}$. We observe for any fixed tower in $X_{2}$ its copy in $X_{1}$ has the property that all levels in this tower have the same measure for every $T_{1}-$invariant measure. Now using Lemma~\ref{Key Lemma} we can define the speedup on all non-final levels of the tower. Then we define a set conjugacy from one tower to another. We simply iterate this process refining each previous tower. We have a great deal of freedom in this construction, enough to ensure the base and tops of the towers converge to prespecified singletons, say $x$ and $T_{1}^{-1}x$, and that the sequence of towers generates the topology on $X_{1}$. 

We begin by fixing $x_{0}\in X_{1}$ and let $\{A_{n}\}_{n\ge 0}$ be a nested sequence of clopen sets, where
$$
\Intersect_{n\ge 0}A_{n}=\{x_{0}\}.
$$
For each $n$ let $Z_{n}=T_{1}^{-1}(A_{n})$ and so,
$$
\Intersect_{n\ge 0}Z_{n}=\{T_{1}^{-1}x_{0}\}.
$$
We may assume with no loss of generality that $A_{0}\intersect Z_{0}=\emptyset$. Moreover, observe for every $\mu\in M(X_{1},T_{1})$ and every $n$ $\mu(A_{n})=\mu(Z_{n})$. That being said, let $\alpha_{0}$ be defined below, 
$$
\alpha_{0}=\min\left\{\int_{X}\mathbbm{1}_{A_{0}}\,d\mu:\mu\in M(X_{1},T_{1})\right\}.
$$
Coupling the fact that $M(X_{1},T_{1})$ is compact in the weak$^{*}$ topology and both $A_{0}$ is clopen, we may conclude $\alpha_{0}>0$: let $\varepsilon_{0}=\alpha_{0}$. Apply Theorem~\ref{K-R} to create $\{\mathcal{Q}(n)\}_{n\ge 0}$, a sequence of Kakutani-Rohklin tower partitions of $X_{2}$. Specifically,
$$
\mathcal{Q}(n)=\{T_{2}^{j}(B_{i}(n)):1\le i\le t(n),0\le j<h_{i}(n)\}
$$
where $t(n)$ represents the total number of columns and $h_{i}(n)$ represents the height of the $i^{th}$ column in the $n^{th}$ Kakutani-Rokhlin tower partition of $X_{2}$. Furthermore, $\{\mathcal{Q}(n)\}_{n\ge 0}$ has the following three properties:
\begin{enumerate}
\item $\displaystyle\Intersect_{n\in\N}\left(\Union_{1\le i\le t(n)}B_{i}(n)\right)=\{y\}$
\item For every $n$ we have $\mathcal{Q}(n+1)$ is finer than $\mathcal{Q}(n)$.
\item $\Union_{n\in\N}\mathcal{Q}(n)$ generates the topology of $X_{2}$.
\end{enumerate}
Let $\{\mathcal{P}(n)\}_{n\ge 0}$ be a sequence of finite clopen partitions which generates the topology on $X$. Use Lemma~\ref{clopen1} with respect to $\varepsilon_{0}$ and obtain a $\delta_{0}>0$ such that for every $K\in\mathscr{B}(X_{2})$ with diam$(K)<\delta_{0}$ we have for every $\nu\in M(X_{2},T_{2}),\, \nu(K)<\varepsilon_{0}$. Since
$$
\displaystyle\Intersect_{n\ge 0}\left(\Union_{1\le i\le t(n)}B_{i}(n)\right)=\{y\}
$$
there exists an $n_{0}$ such that 
$$
\diam\left(\Union_{1\le i\le t(n_{0})}B_{i}(n_{0})\right)<\delta_{0}
$$
thus, for every $\nu\in M(X_{2},T_{2})$ we have that
$$
\nu\left(\Union_{1\le i\le t(n_{0})}B_{i}(n_{0})\right)<\varepsilon_{0}.
$$
Below we give a picture of $(X_{2},T_{2})$ partitioned into $\mathcal{Q}(n_{0})$. We will use $F^{-1}$ to copy this tower partition into $X_{1}$.
\vskip 2mm
$$
\begin{tikzpicture}
\draw(0,0)--(1,0) node[right]{$B_{1}(n_{0})$};
\draw[->](.5,0)--(.5,1) node[midway,right]{$T_{2}$};
\draw(0,1)--(1,1);
\node at (.5,1.5) {$\vdots$};
\draw(0,2.0)--(1,2.0);
\draw(3,0)--(4,0) node[right]{$B_{2}(n_{0})$};
\draw[->](3.5,0)--(3.5,1) node[midway,right]{$T_{2}$};
\draw(3,1)--(4,1);
\node at (3.5,1.5) [] {$\vdots$};
\draw(3,2)--(4,2);
\draw[->](3.5,2)--(3.5,3) node[midway,right]{$T_{2}$};
\draw(3,3)--(4,3);
\node at (5.75,0) [] {$\dots$};
\draw(6.5,0)--(7.5,0) node[right]{$B_{t(n_{0})}(n_{0})$};
\draw[->](7,0)--(7,1) node[midway,right]{$T_{2}$};
\draw(6.5,1)--(7.5,1);
\node at (7,1.5) [] {$\vdots$};
\draw(6.5,2)--(7.5,2);
\draw[->](7,2)--(7,3) node[midway,right]{$T_{2}$};
\draw(6.5,3)--(7.5,3);
\draw[->](7,3)--(7,4) node[midway,right]{$T_{2}$};
\draw(6.5,4)--(7.5,4);
\draw[->](1.0,3) .. controls (-1,4) and (-2,-3) .. node[near start, sloped, above]{$T_{2}$} (3.5,-1);
\end{tikzpicture}
$$
\vskip 2mm 

Define for $1\le i\le t(n_{0})$ and $0\le j<h_{i}(n_{0})$
$$
C'(i,j)=F^{-1}(T_{2}^{j}(B_{i}(n_{0}))).
$$
We will make a series of alterations to each $C'(i,j)$ resulting in $C(i,j)$ with 
$$
\mu(C'(i,j))=\mu(C(i,j))
$$
for all $\mu\in M(X_{1},T_{1})$. Furthermore, this will be done iteratively and once completed we will have the following
$$
x\in\Union_{i=1}^{t(n_{0})}C(i,0)\subseteq A_{0}\quad\text{ and }\quad T_{1}^{-1}x\in\Union_{i=1}^{t(n_{0})} C(i,h_{i}(n_{0})-1)\subseteq Z_{0}.
$$
\textbf{First ensure $C(i,0)\subseteq  A_{0}$ for each $i$.}

Recall that for every $\mu\in M(X_{1},T_{1})$
$$
\mu\left(\disUnion_{i=1}^{t(n_{0})}C'(i,0)\right)=\sum_{i=1}^{t(n_{0})}\mu(C'(i,0))=\sum_{i=1}^{t(n_{0})}\mu(F^{-1}B_{i}(n_{0}))=\sum_{i=1}^{t(n_{0})}\nu_{\mu}(B_{i}(n_{0}))<\varepsilon_{0}\le\mu(A_{0}),
$$
in particular for every $\mu\in M(X_{1},T_{1})$,
\begin{equation}
\mu(A_{0})-\sum_{i=1}^{t(n_{0})}\mu(C'(i,0))>0.
\end{equation}
Define
$$
D_{0}(i)=C'(i,0)\intersect A_{0}\text{ and }D'_{0}(i)=C'(i,0)\intersect A_{0}^{c}
$$
and by the above we have for every $\mu\in M(X_{1},T_{1})$
\begin{equation}
\sum_{i=1}^{t(n_{0})}\mu(D'_{0}(i))<\mu\left(A_{0}\backslash\disUnion_{i=1}^{t(n_{0})}D_{0}(i)\right).
\end{equation}
Fix $i=1$. It may be the case that $D'_{0}(1)\neq\emptyset$ and in this case we wish to amend this, and to do it in a way which preserves all the measures of each clopen set $C'(i,j)$. 
We know from $(4)$ above that for $\mu\in M(X_{1},T_{1})$
$$
\mu(D'_{0}(1))<\mu\left(A_{0}\backslash\disUnion_{i=1}^{t(n_{0})}D_{0}(i)\right).
$$
Thus, as $M(X_{1},T_{1})$ is a D-simplex, there exists $C_{1}\subseteq A_{0}\backslash\disUnion_{i=1}^{t(n_{0})}D_{0}(i)$ clopen such that for every $\mu\in M(X_{1},T_{1}),\, \mu(C_{1})=\mu(D'_{0}(1)).$ Note, $C_{1}$ is partitioned by
$$
\disUnion_{i=1}^{t(n_{0})}\disUnion_{j=0}^{h_{i}(n_{0})-1}C'(i,j)
$$
into
$$
C_{1}=\disUnion_{k=1}^{m}C_{i_{k},j_{k}}(1),
$$
where $C_{i_{k},j_{k}}(1)\subseteq C'(i_{k},j_{k})$. Hence, by Lemma~\ref{cpartition} there exists a partition of $D'_{0}(1)$,
$$
D'_{0}(1)=\disUnion_{k=1}^{m}D_{i_{k},j_{k}}(1)
$$
where for all $\mu\in M(X_{1},T_{1})$ and each $k=1,2,\dots,m$ 
$$
\mu(C_{i_{k},j_{k}}(1))=\mu(D_{i_{k},j_{k}}(1)).
$$
Define 
$$
C(1,0)=D_{0}(1)\disunion C_{1}
$$
and for each $k=1,2,\dots,m$
$$
C''(i_{k},j_{k})=C'(i_{k},j_{k})\backslash C_{i_{k},j_{k}}(1)\disunion D_{i_{k},j_{k}}(1).
$$
Observe, for every $\mu\in M(X_{1},T_{1})$ and $k=1,2,\dots,m$ we have
$$
\mu(C(1,0))=\mu(C'(1,0))\quad \mu(C''(i_{k},j_{k}))=\mu(C'(i_{k},j_{k})),
$$
and of course all the measure of the unaffected $C'(i,j)$ still have the same measure for each $\mu\in M(X_{1},T_{1})$. Combining $(3)$ and $(4)$ from above reveals
$$
\sum_{i=2}^{t(n_{0})}\mu(D'_{0}(i))<\mu\left(A_{0}\backslash\disUnion_{i=2}^{t(n_{0})}D_{0}(i)\disunion C(1,0)\right).
$$
We now simply repeat the above argument. Inequalities $(3)$ and $(4)$ allow us to do this construction for each $i=1,2,\dots,t(n_{0})$ defining $C(i,0)$ for $i=1,2,\dots,t(n_{0})$. Furthermore, by construction we have the following two properties
\begin{enumerate}
\item for every $\mu\in M(X_{1},T_{1})$ and every $i$, $\mu(C(i,0))=\mu(C'(i,0))$.
\item for every $i\neq j$, $C(i,0)\intersect C(j,0)=\emptyset$.
\end{enumerate}

\textbf{Second, ensure $x\in\Union_{i=1}^{t(n_{0})}C(i,0)$.}

In all of the adjusting to construct $C(i,0)$, $i=1,2,\dots, t(n_{0}),$ we may not have captured $x$. If not, then 
$$
x\in A_{0}\backslash\disUnion_{i=1}^{n_{0}}C(i,0).
$$
Use Proposition~\ref{clopen1} and find a small enough clopen subset of $A_{0}\backslash\disUnion_{i=1}^{n_{0}}C(i,0)$ containing $x$ and exchange it with part of $C(1,0)$.

\textbf{Third, repeat steps one and two to obtain $C(i,h_{i}(n_{0})-1),\, i=1,2,\dots,t(n_{0})$.}

Notice that $\varepsilon_{0}=\min\{\alpha_{0},\zeta_{0}\}$, so we can repeat the above two steps using the same sort of calculations and construction to obtain clopen sets $C(i,h_{i}(n_{0})-1),\, i=1,2,\dots,t(n_{0})$ such that
$$
T_{1}^{-1}x\in\disUnion_{i=1}^{t(n_{0})}C(i,h_{i}(n_{0})-1)\subseteq Z_{0}.
$$
Furthermore, for every $\mu\in M(X_{1},T_{1})$
$$
\mu(C'(i,h_{i}(n_{0})-1))=\mu(C(i,h_{i}(n_{0})-1)).
$$
Having defined $C(i,0)$ and $C(i,h_{i}(n_{0})-1)$ for $i=1,2,\dots,m$ we wish to keep consistent notation and thus rename any $C'(i,j)$ to simply $C(i,j).$
Whence, we have the following: 
\begin{itemize}
\item $x\in\displaystyle\disUnion_{i=1}^{t(n_{0})}C(i,0)\subseteq A_{0}$
\item $T_{1}^{-1}x\in\displaystyle\disUnion_{i=1}^{t(n_{0})}C(i,h_{i}(n_{0})-1)\subseteq Z_{0}$
\item For every $\mu\in M(X_{1},T_{1})$ and fixed $i=1,2,\dots,t(n_{0})$ we have
$$
\mu(C(i,0))=\mu(C(i,1))=\dots=\mu(C(i,h_{i}(n_{0})-1))
$$
\end{itemize}
Now we use repeated applications of our key lemma, Lemma~\ref{Key Lemma}, to define our speedup on nearly all of $X_{1}$. Specifically,
$$
S(C(i,j))=C(i,j+1)
$$
for each $1\le i\le t(n_{0})$ and $0\le j<h_{i}(n_{0})-1$. Thus $S$ is defined on 

$X_{1}\backslash\left(\disUnion_{i=1}^{t(n_{0})}C(i,h_{i}(n_{0})-1)\right)$.
$$
\begin{tikzpicture} 
\draw(-.5,0)--(.5,0) node[right]{$C(1,0)$};
\draw[->](0,0)--(0,1) node[midway,right]{$S$};
\draw(-.5,1)--(.5,1) node[right]{$C(1,1)$};
\node at (0,1.5) {$\vdots$};
\draw(-.5,2.0)--(.5,2.0) node[right]{$C(1,h_{1}(n_{0})-1)$};
\draw(3.2,0)--(4.2,0) node[right]{$C(2,0)$};
\draw[->](3.7,0)--(3.7,1) node[midway,right]{$S$};
\draw(3.2,1)--(4.2,1) node[right]{$C(2,1)$};
\node at (3.7,1.5) [] {$\vdots$};
\draw(3.2,2)--(4.2,2) node[right]{$C(2,h_{2}(n_{0})-2)$};
\draw[->](3.7,2)--(3.7,3) node[midway,right]{$S$};
\draw(3.2,3)--(4.2,3) node[right]{$C(2,h_{2}(n_{0})-1)$};
\node at (5.75,0) [] {$\dots$};
\draw(7,0)--(8,0) node[right]{$C(t(n_{0}),0)$};
\draw[->](7.5,0)--(7.5,1) node[midway,right]{$S$};
\draw(7,1)--(8,1) node[right]{$C(t(n_{0}),1)$};
\node at (7.5,1.5) [] {$\vdots$};
\draw(7,2)--(8,2) node[right]{$C(t(n_{0}),h_{t(n_{0})}(n_{0})-3)$};
\draw[->](7.5,2)--(7.5,3) node[midway,right]{$S$};
\draw(7,3)--(8,3) node[right]{$C(t(n_{0}),h_{t(n_{0})}(n_{0})-2)$};
\draw[->](7.5,3)--(7.5,4) node[midway,right]{$S$};
\draw(7,4)--(8,4) node[right]{$C(t(n_{0}),h_{t(n_{0})}(n_{0})-1)$};
\end{tikzpicture}
$$
\vskip 2mm 

Formally, let
$$
\mathscr{P}'(0)=\{S^{j}(C(i,0)):1\le i\le t(n_{0}),\, 0\le j<h_{i}(n_{0})\}
$$
where $S^{j}(C(i,0))=C(i,j)$. Refine $\mathscr{P}'(0)$ with respect to each clopen set in $\mathcal{P}$ as in Proposition~\ref{K-R refine}, thus preserving the tower structure, and call the result $\mathscr{P}(0)$. So $X_{1}$ now looks like
\vskip 2mm
$$
\begin{tikzpicture}
\draw(-1,0)--(-2/3,0) node[midway,below]{$E_{1}(0)$};
\draw[->](-5/6,0)--(-5/6,1) node[midway,right]{$S$};
\node at (-5/6,1.5) []{$\vdots$};
\draw(-1,2)--(-2/3,2);
\draw(-1,1)--(-2/3,1);
\node at (-.15,0) [] {$\dots$};
\draw(1/3,0)--(2/3,0) node[midway,below]{$E_{m}(0)$};
\draw[->](.5,0)--(.5,1) node[midway,right]{$S$};
\draw((1/3,1)--(2/3,1);
\node at (.5,1.5) []{$\vdots$};
\draw(1/3,2)--(2/3,2);
\draw(5/3,0)--(2,0) node[midway,below]{$E_{m+1}(0)$};
\draw[->](11/6,0)--(11/6,1) node[midway,right]{$S$};
\draw(5/3,1)--(2,1);
\node at (11/6,1.5) []{$\vdots$};
\draw(5/3,2)--(2,2);
\draw[->](11/6,2)--(11/6,3) node[midway,right]{$S$};
\draw(5/3,3)--(2,3);
\node at (2.75,0) [] {$\dots$};
\draw(10/3,0)--(11/3,0) node[midway,below]{$E_{n}(0)$};
\draw[->](21/6,0)--(21/6,1) node[midway,right]{$S$};
\draw(10/3,1)--(11/3,1);
\node at (21/6,1.5)[]{$\vdots$};
\draw(10/3,2)--(11/3,2);
\draw[->](21/6,2)--(21/6,3) node[midway,right]{$S$};
\draw(10/3,3)--(11/3,3);
\node at (4.25,0) [] {$\dots$};
\draw(14/3,0)--(5,0) node[midway,below]{$E_{p}(0)$};
\draw[->](29/6,0)--(29/6,1) node[midway,right]{$S$};
\draw(14/3,1)--(5,1);
\node at (29/6,1.5)[] {$\vdots$};
\draw(14/3,2)--(5,2);
\draw[->](29/6,2)--(29/6,3) node[midway,right]{$S$};
\draw(14/3,3)--(5,3);
\draw[->](29/6,3)--(29/6,4) node[midway,right]{$S$};
\draw(14/3,4)--(5,4);
\node at (5.5,0) []{$\dots$};
\draw(6,0)--(19/3,0) node[near end,below]{$E_{t(n'_{0})}(0)$};
\draw[->](37/6,0)--(37/6,1) node[midway,right]{$S$};
\draw(6,1)--(19/3,1);
\node at (37/6,1.5)[]{$\vdots$};
\draw(6,2)--(19/3,2);
\draw[->](37/6,2)--(37/6,3) node[midway,right]{$S$};
\draw(6,3)--(19/3,3);
\draw[->](37/6,3)--(37/6,4) node[midway,right]{$S$};
\draw(6,4)--(19/3,4);
\end{tikzpicture}
$$
\vskip 2mm
where for each $i$
$$
C(i,0)=\disUnion_{j=k_{i}}^{k_{i+1}-1}E_{j}(0)\text{ and }\mathscr{P}(0)=\{S^{j}E_{i}(0):1\le i\le t'(n_{0}), 0\le j<h'_{i}(n_{0})\}
$$
where $t'(n_{0})$ is the new number of base levels and $h_{i}'(n_{0})$ gives the height of the respective column. Because $\mu(C(i,0))=\mu(C'(i,0))$ for all $\mu\in M(X_{1},T_{1})$, $F:X_{1}\rightarrow X_{2}$ is a homeomorphism and through the use of Lemma~\ref{cpartition} we can refine $\mathcal{Q}(n_{0})$, our tower partition in $X_{2}$ to look exactly like $\mathscr{P}(0)$. That is, there are sets $B'_{j}(0)$ such that
$$
\mu\circ F^{-1}(B'_{j}(0))=\mu(E_{j}(0))\text{ and } B_{i}(n_{0})=\disUnion_{\ell=k_{i}}^{k_{i+1}-1}B'_{\ell}(0)
$$
and set 
$$
\mathcal{Q}'(n_{0})=\{T_{2}^{j}B'_{\ell}(0):1\le\ell\le t'(n_{0}), 0\le j<h'_{i}(n_{0})\}.
$$
Hence, $X_{2}$ looks like 
\vskip 2mm
$$
\begin{tikzpicture}
\draw(-1,0)--(-2/3,0) node[midway,below]{$B'_{1}(0)$};
\draw[->](-5/6,0)--(-5/6,1) node[midway,right]{$T_{2}$};
\node at (-5/6,1.5) []{$\vdots$};
\draw(-1,2)--(-2/3,2);
\draw(-1,1)--(-2/3,1);
\node at (-.15,0) [] {$\dots$};
\draw(1/3,0)--(2/3,0) node[midway,below]{$B'_{m}(0)$};
\draw[->](.5,0)--(.5,1) node[midway,right]{$T_{2}$};
\draw((1/3,1)--(2/3,1);
\node at (.5,1.5) []{$\vdots$};
\draw(1/3,2)--(2/3,2);
\draw(11/6,0)--(13/6,0) node[midway,below]{$B'_{m+1}(0)$};
\draw[->](2,0)--(2,1) node[midway,right]{$T_{2}$};
\draw(11/6,1)--(13/6,1);
\node at (2,1.5) []{$\vdots$};
\draw(11/6,2)--(13/6,2);
\draw[->](2,2)--(2,3) node[midway,right]{$T_{2}$};
\draw(11/6,3)--(13/6,3);
\node at (2.75,0) [] {$\dots$};
\draw(10/3,0)--(11/3,0) node[midway,below]{$B'_{n}(0)$};
\draw[->](21/6,0)--(21/6,1) node[midway,right]{$T_{2}$};
\draw(10/3,1)--(11/3,1);
\node at (21/6,1.5)[]{$\vdots$};
\draw(10/3,2)--(11/3,2);
\draw[->](21/6,2)--(21/6,3) node[midway,right]{$T_{2}$};
\draw(10/3,3)--(11/3,3);
\node at (4.25,0) [] {$\dots$};
\draw(14/3,0)--(5,0) node[midway,below]{$B'_{q}(0)$};
\draw[->](29/6,0)--(29/6,1) node[midway,right]{$T_{2}$};
\draw(14/3,1)--(5,1);
\node at (29/6,1.5)[] {$\vdots$};
\draw(14/3,2)--(5,2);
\draw[->](29/6,2)--(29/6,3) node[midway,right]{$T_{2}$};
\draw(14/3,3)--(5,3);
\draw[->](29/6,3)--(29/6,4) node[midway,right]{$T_{2}$};
\draw(14/3,4)--(5,4);
\node at (5.5,0) []{$\dots$};
\draw(6,0)--(19/3,0) node[near end,below]{$B'_{t'(n_{0})}(0)$};
\draw[->](37/6,0)--(37/6,1) node[midway,right]{$T_{2}$};
\draw(6,1)--(19/3,1);
\node at (37/6,1.5)[]{$\vdots$};
\draw(6,2)--(19/3,2);
\draw[->](37/6,2)--(37/6,3) node[midway,right]{$T_{2}$};
\draw(6,3)--(19/3,3);
\draw[->](37/6,3)--(37/6,4) node[midway,right]{$T_{2}$};
\draw(6,4)--(19/3,4);
\end{tikzpicture}
$$
\vskip 2mm
and we define a map on the level of sets, which in the limit will give us our conjugacy. Define $\Phi_{0}:\mathscr{P}(0)\rightarrow\mathcal{Q}'(n_{0})$ by
$$
\Phi_{0}(S^{j}(E_{i}(0)))=T_{2}^{j}(B'_{i}(0)).
$$
We have now completed the first step of our construction!

\textbf{Inductive step} 

We now move onto the second (inductive) step of our construction. Let $\varepsilon_{1}=\min\{\alpha_{1},\rho_{0}\}$ where
\begin{align*}
\alpha_{1}&=\min\left\{\int_{X}\mathbbm{1}_{A_{1}}\,d\mu:\mu\in M(X_{1},T_{1})\right\}>0\\
\rho_{0}&=\displaystyle\min_{1\le i\le t'(n_{0})}\left\{\int_{X}\mathbbm{1}_{E_{i}(0)}\,d\mu:\mu\in M(X_{1},T_{1})\right\}>0
\end{align*}
and find $n_{1}>n_{0}$ large enough such that the following are true:
\begin{enumerate}
\item for every $\nu\in M(X_{2},T_{2})$
$$
\nu\left(\Union_{i=1}^{t(n_{1})}B_{i}(n_{1})\right)<\varepsilon_{1}
$$
\item $\mathcal{Q}(n_{1})$ refines $\mathcal{Q}'(n_{0})$ i.e. $\mathcal{Q}(n_{1})\ge \mathcal{Q}'(n_{0})$.
\end{enumerate}
Now as $\mathcal{Q}(n_{1})\ge\mathcal{Q}'(n_{0})$ we see that each column in $\mathcal{Q}(n_{1})$ is simply made up of stacking towers from $\mathcal{Q}'(n_{0})$ upon one another. So we view $\mathcal{Q}(n_{1})$ not only as a space time partition, but also as a labeled or tagged partition by the previous tower construction, in this case tagged by the towers of $\mathcal{Q}'(n_{0})$. We give a picture as an illustrative example of the tagging or labeling of the towers.
$$
\begin{tikzpicture}
\draw(-1,0)--(0,0) node[right]{$1$};
\draw(-1,.5)--(0,.5) node[right]{$2$};
\draw(.5,0)--(1.5,0) node[right]{$3$};
\draw(.5,.5)--(1.5,.5) node[right]{$4$};
\draw(.5,1)--(1.5,1) node[right]{$5$};
\draw(2,0)--(3,0) node[right]{$6$};
\draw(2,.5)--(3,.5) node[right]{$7$};
\draw(2,1)--(3,1) node[right]{$8$};
\draw(2,1.5)--(3,1.5) node[right]{$9$};
\draw(.5,-.5)--(1.5,-.5);
\draw[->](1,-.5)--(1,-1);
\draw(2.5,-1.5)--(3,-1.5) node[right]{$2$};
\draw(2.5,-2)--(3,-2) node[right]{$1$};
\draw(2.5,-2.5)--(3,-2.5) node[right]{$9$};
\node at (2.75,-3)[]{$\vdots$};
\node at (2.75,-3.5)[]{$\vdots$};
\draw(2.5,-4)--(3,-4) node[right]{$6$};
\draw(2.5,-4.5)--(3,-4.5) node[right]{$2$};
\draw(2.5,-5)--(3,-5) node[right]{$1$};
\draw(2.5,-5.5)--(3,-5.5) node[right]{$2$};
\draw(2.5,-6)--(3,-6) node[right]{$1$};
\node at (2,-6)[]{$\dots$};
\draw(.65,-6)--(1.15,-6) node[right]{$3$};
\draw(.65,-5.5)--(1.15,-5.5) node[right]{$4$};
\draw(.65,-5)--(1.15,-5) node[right]{$5$};
\draw(.65,-4.5)--(1.15,-4.5) node[right]{$6$};
\node at (.9,-3.95) []{$\vdots$};
\draw(.65,-3.5)--(1.15,-3.5) node[right]{$7$};
\draw(.65,-3)--(1.15,-3) node[right]{$8$};
\node at (0,-6)[]{$\dots$};
\draw(-1.5,-6)--(-1.0,-6) node[right]{$1$};
\draw(-1.5,-5.5)--(-1,-5.5) node[right]{$2$};
\draw(-1.5,-5)--(-1,-5) node[right]{$3$};
\draw(-1.5,-4.5)--(-1,-4.5) node[right]{$4$};
\draw(-1.5,-4)--(-1,-4) node[right]{$5$};
\end{tikzpicture}
$$
As a consequence of $\mathcal{Q}(n_{1})\ge \mathcal{Q}'(n_{0})$, we see that for $1\le i\le t'(n_{0})$ and $0\le j<h'_{i}(n_{0})$ we have
$$
T_{2}^{j}(B'_{i}(0))=\disUnion_{a=1}^{m}T_{2}^{j_{a}}(B_{i_{a}}(n_{1}))
$$
and thus we have that
$$
F^{-1}(T_{2}^{j}(B'_{i}(0)))=\disUnion_{a=1}^{m} F^{-1}(T_{2}^{j_{a}}(B_{i_{a}}(n_{1}))).
$$
As a result of Lemma~\ref{cpartition} we can write
$$
S^{j}(E_{i}(0))=\disUnion_{a=1}^{m}E_{(i,a)}(1).
$$
Using Lemma~\ref{cpartition} on each copied tower of $X_{2}$ in $X_{1}$, we can copy $\mathcal{Q}(n_{1})$ in $X_{1}$ in a way which refines our $\mathscr{P}(0)$: call this collection $\mathscr{P}'(1)$. Recall, we have already defined $S$ on a large portion of $X_{1}$ and we do not need, nor want, to be redefining $S$ on this portion of the space. Following the tagging from $\mathcal{Q}(n_{1})$, extend $S$ on any and all previous undefined pieces, save for the top levels of each column. As before, using Proposition~\ref{K-R refine} refine $\mathscr{P}'(1)$ with respect to each clopen set in $\mathcal{P}(1)$ and call $\mathscr{P}(1)$ the result of this refinement. Specfically,
$$
\mathscr{P}(1)=\{S^{j}(E_{i}(1)):1\le i\le t'(n_{1}), 0\le j<h'_{i}(n_{1})\}.
$$
Use Lemma~\ref{cpartition} and $F$ to push this refinement onto $\mathcal{Q}(n_{1})$, resulting in
$$
\mathcal{Q}'(n_{1})=\{T_{2}^{j}(B'_{i}(1)):1\le i\le t'(n_{1}), 0\le j<h'_{i}(n_{1})\}.
$$
As before we define $\Phi_{1}:\mathscr{P}(1)\rightarrow\mathcal{Q}'(n_{1})$ by
$$
\Phi_{1}(S^{j}(E_{i}(1)))=T_{2}^{j}(B'_{i}(1)),
$$
and by construction $\Phi_{1}$ extends $\Phi_{0}$. We continue this process and thus by induction we see that we will have defined $S:X_{1}\backslash\{T_{1}^{-1}x\}\rightarrow X_{1}\backslash\{x\}$. By construction, $S$ is a homeomorphism and so by defining $p(T_{1}^{-1}x)=1$ we see that $S$ now lifts to a homeomorphism on all of $X$. Furthermore, $\{\Phi_{n}\}_{n\ge 0}$ induces, by way of intersection, a point map $\varphi:X_{1}\rightarrow X_{2}$, which is our conjugacy from $(X_{1},S)$ onto $(X_{2},T_{2})$. The fact $\varphi$ is well defined and a homeomorphism is due to both $\{\mathscr{P}(k)\}_{k\ge 0}$ and $\{\mathcal{Q}'(n_{k})\}_{k\ge 0}$ being generating for the topology of $X_{1}$ and $X_{2}$ respectively. Moreover, $\varphi$ conjugates $S$ and $T_{2}$ is built into the definition of each $\Phi_{n}$ and each $\Phi_{n+1}$ extends the previous $\Phi_{n}$. Therefore, our theorem as been proved.
\end{proof}
\section{Speedup Equivalence}
We wish to view speedups as a relation and to that end it will be helpful to introduce some notation. Let $(X_{i},T_{i}),\, i=1,2$ be minimal Cantor systems and write $T_{1}\rightsquigarrow T_{2}$ to mean that $(X_{2},T_{2})$ is a speedup of $(X_{1},T_{1})$. Moreover, define $(X_{1},T_{1})$ and $(X_{2},T_{2})$ to be \emph{\textbf{speedup equivalent}}, written $T_{1}\leftrightsquigarrow T_{2}$, if and only if $T_{1}\rightsquigarrow T_{2}$ and $T_{2}\rightsquigarrow T_{1}$. It is straight forward to verify that speedup equivalence is indeed an equivalence relation. Combining \cite[Thm 2.2]{GPS} with our main theorem we obtain the following corollary.

\begin{corollary}\label{Cor.1}
	Let $(X_{1},T_{1})$ and $(X_{2},T_{2})$ be minimal Cantor systems. If $(X_{1},T_{1})$ and $(X_{2},T_{2})$ are orbit equivalent, then $(X_{1},T_{1})$ and $(X_{2},T_{2})$ are speedup equivalent (i.e. $T_{1}\leftrightsquigarrow T_{2})$.
\end{corollary}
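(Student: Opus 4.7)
The plan is to derive the corollary directly from the Main Theorem and Theorem~\ref{GPS Theorem} by observing that an order isomorphism is a very special case of the surjective, positive-cone-preserving, order-unit-preserving homomorphism that condition~(2) of the Main Theorem requires.

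First, I would unpack what orbit equivalence gives us. By Theorem~\ref{GPS Theorem}, the hypothesis that $(X_{1},T_{1})$ and $(X_{2},T_{2})$ are orbit equivalent is equivalent to the existence of an order isomorphism
\[
\psi : K^{0}(X_{1},T_{1})/\mathrm{Inf}(K^{0}(X_{1},T_{1})) \longrightarrow K^{0}(X_{2},T_{2})/\mathrm{Inf}(K^{0}(X_{2},T_{2}))
\]
preserving the distinguished order units. Using the identification $G_i = C(X_i,\Z)/Z_{T_i} \cong K^{0}(X_i,T_i)/\mathrm{Inf}(K^{0}(X_i,T_i))$ recorded at the end of Section 2, this is the same as an order isomorphism $\psi : G_{1} \to G_{2}$ with $\psi(\mathbf{1}) = \mathbf{1}$ and $\psi(G_{1}^{+}) = G_{2}^{+}$.

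Next I would verify that $\psi^{-1} : G_{2} \to G_{1}$ satisfies the hypotheses of condition~(2) of the Main Theorem with $(X_{2},T_{2})$ playing the role of the source and $(X_{1},T_{1})$ the role of the target. Being an isomorphism, $\psi^{-1}$ is certainly a surjective group homomorphism; because $\psi$ is an order isomorphism, $\psi^{-1}(G_{2}^{+}) = G_{1}^{+}$; and $\psi^{-1}(\mathbf{1}) = \mathbf{1}$. Invoking the Main Theorem, $(2) \Rightarrow (1)$ in the direction just described yields that $(X_{2},T_{2})$ is a speedup of $(X_{1},T_{1})$, i.e.\ $T_{1} \rightsquigarrow T_{2}$.

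Finally, I would apply exactly the same argument to $\psi$ itself, viewed as a surjective, positive-cone-preserving, order-unit-preserving homomorphism from $G_{1}$ onto $G_{2}$, to deduce via the Main Theorem that $(X_{1},T_{1})$ is a speedup of $(X_{2},T_{2})$, i.e.\ $T_{2} \rightsquigarrow T_{1}$. Combining the two conclusions gives $T_{1} \leftrightsquigarrow T_{2}$, as required. There is no real obstacle here: the content of the corollary is entirely in having already proved the Main Theorem and noticing that an order isomorphism, being bijective, supplies surjective cone-preserving homomorphisms in \emph{both} directions simultaneously.
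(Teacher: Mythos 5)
Your proposal is correct and is precisely the argument the paper intends: the paper gives no written proof beyond the remark that the corollary follows by ``combining \cite[Thm 2.2]{GPS} with our main theorem,'' which is exactly your observation that the GPS order isomorphism and its inverse each serve as the surjective, cone- and order-unit-preserving homomorphism demanded by condition (2) of the Main Theorem, in the two respective directions. Nothing further is needed.
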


Rephrasing Corollary~\ref{Cor.1} above, as equivalence relations orbit equivalence is contained in speedup equivalence. This leads us to a fundamental question: are orbit equivalence and speedup equivalence the same equivalence relation? At his time we only have the partial answer in the form of the Theorem~\ref{speedupequivalence}. However, before we can prove the aforementioned theorem we need a proposition, for which the proof is straight forward and hence omitted.
\begin{proposition}\label{MutuallySingular}
	Let $(X_{i},T_{i})$ be minimal Cantor systems and $\varphi:X_{1}\rightarrow X_{2}$ be a homeomorphism. If $\varphi_{*}:M(X_{1},T_{1})\hookrightarrow M(X_{2},T_{2})$ is an injection, then $\varphi_{*}$ preserves pairs of mutually singular measures.
\end{proposition}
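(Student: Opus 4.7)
The statement is essentially a transfer of the measure-theoretic witness of mutual singularity along the homeomorphism $\varphi$, so I expect a short and direct proof. The plan is to unpack what $\varphi_*$ does at the level of sets, transport the separating set given by mutual singularity, and verify that the pushforward measures agree on that transported set with the values $1$ and $0$.

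More concretely, I would first recall that for the pushforward $\varphi_*$ associated to a homeomorphism we have $\varphi_*\mu(B)=\mu(\varphi^{-1}(B))$ for every Borel $B\subseteq X_2$. Since $\varphi$ is a homeomorphism between Polish (in fact, compact metrizable) spaces, it carries Borel sets to Borel sets; in particular if $A\subseteq X_1$ is Borel then $\varphi(A)\subseteq X_2$ is Borel and $\varphi^{-1}(\varphi(A))=A$.

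Now suppose $\mu,\nu\in M(X_1,T_1)$ are mutually singular, so there exists a Borel set $A\subseteq X_1$ with $\mu(A)=1$ and $\nu(A)=0$. Setting $B=\varphi(A)$, which is Borel by the previous remark, we compute
\begin{align*}
\varphi_*\mu(B)&=\mu(\varphi^{-1}(B))=\mu(A)=1,\\
\varphi_*\nu(B)&=\nu(\varphi^{-1}(B))=\nu(A)=0.
\end{align*}
Thus $B\subseteq X_2$ is a Borel set witnessing that $\varphi_*\mu$ and $\varphi_*\nu$ are mutually singular, which is what was to be shown.

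I do not anticipate any real obstacle: the injectivity hypothesis on $\varphi_*$ plays no role here, and we never need to appeal to $T_1$- or $T_2$-invariance. The only subtle point is the implicit fact that a homeomorphism of Cantor spaces sends Borel sets to Borel sets, and this is immediate from continuity of $\varphi^{-1}$. This is why the author is comfortable omitting the proof.
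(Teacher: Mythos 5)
Your proof is correct and is precisely the straightforward argument the paper has in mind (the paper explicitly omits the proof as "straight forward"): transport the Borel witness $A$ of singularity to $\varphi(A)$ and evaluate the pushforwards. Your observation that the injectivity of $\varphi_*$ and the invariance of the measures play no role is also accurate.
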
	
We now use Proposition~\ref{MutuallySingular} to prove the following theorem.
\begin{theorem}\label{speedupequivalence}
	Let $(X_{i},T_{i}),\, i=1,2,$ be minimal Cantor systems each with finitely many ergodic measures. If $(X_{1},T_{1})$ and $(X_{2},T_{2})$ are speedup equivalent, then $(X_{1},T_{1})$ and $(X_{2},T_{2})$ are orbit equivalent.
\end{theorem}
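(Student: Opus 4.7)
My plan is to use the Main Theorem (in the direction $(1) \Rightarrow (3)$) to extract concrete geometric data from speedup equivalence, and then feed that data into Theorem~\ref{GPS} (in the direction $(3) \Rightarrow (1)$). Since $T_1 \rightsquigarrow T_2$ and $T_2 \rightsquigarrow T_1$, the Main Theorem yields homeomorphisms $F: X_1 \to X_2$ and $G: X_2 \to X_1$ such that the pushforwards $F_* : M(X_1,T_1) \hookrightarrow M(X_2,T_2)$ and $G_*: M(X_2,T_2) \hookrightarrow M(X_1,T_1)$ are affine injections. The goal is to upgrade one of these injections (say $F_*$) to a bijection under the finiteness hypothesis, at which point Theorem~\ref{GPS}(3) applies.

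Let $n_i = |\partial_e M(X_i,T_i)|$, which is finite by hypothesis, and list the ergodic measures of $(X_i,T_i)$ as $\nu_1^{(i)},\ldots,\nu_{n_i}^{(i)}$. The key observation is that $M(X_i,T_i)$ is then a Bauer simplex of dimension $n_i - 1$: every $\mu \in M(X_i,T_i)$ has a unique representation $\mu = \sum_j c_j \nu_j^{(i)}$, and two such measures are mutually singular if and only if their supports in the vertex set $\{\nu_1^{(i)},\ldots,\nu_{n_i}^{(i)}\}$ are disjoint. First I would apply Proposition~\ref{MutuallySingular} to $F_*$: the ergodic measures $\nu_1^{(1)},\ldots,\nu_{n_1}^{(1)}$ are pairwise mutually singular, so their images under $F_*$ are $n_1$ pairwise mutually singular elements of $M(X_2,T_2)$, whose vertex-supports (nonempty subsets of $\{1,\ldots,n_2\}$) must be pairwise disjoint. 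This forces $n_1 \le n_2$. Running the same argument with $G_*$ gives $n_2 \le n_1$, hence $n_1 = n_2 =: n$.

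With $n_1 = n_2 = n$ in hand, the pigeonhole principle finishes the work on extreme points: the images $F_*(\nu_j^{(1)})$, $j = 1,\ldots,n$, are pairwise mutually singular with pairwise disjoint nonempty vertex-supports inside the $n$-element set $\{1,\ldots,n\}$, so each vertex-support is a singleton. Thus $F_*$ sends each $\nu_j^{(1)}$ to an ergodic measure of $(X_2,T_2)$. Injectivity of $F_*$ on the finite set $\{\nu_1^{(1)},\ldots,\nu_n^{(1)}\}$ together with $|\partial_e M(X_2,T_2)| = n$ upgrades this to a bijection $\partial_e M(X_1,T_1) \to \partial_e M(X_2,T_2)$. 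Because $F_*$ is affine and the simplices are Bauer simplices (so each element is uniquely a convex combination of extreme points), extending by affinity gives that $F_*$ is a bijection $M(X_1,T_1) \to M(X_2,T_2)$.

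Finally, the homeomorphism $F: X_1 \to X_2$ now satisfies condition $(3)$ of Theorem~\ref{GPS}: it carries $M(X_1,T_1)$ bijectively onto $M(X_2,T_2)$. Invoking $(3) \Rightarrow (1)$ of Theorem~\ref{GPS} concludes that $(X_1,T_1)$ and $(X_2,T_2)$ are orbit equivalent. I expect the main obstacle to be the step that promotes the affine injection $F_*$ to a map sending ergodic measures to ergodic measures; everything else is either a direct appeal to the Main Theorem, to Theorem~\ref{GPS}, or to Proposition~\ref{MutuallySingular}, whereas that step is where the finite-ergodic-measure hypothesis is genuinely used through the pigeonhole on vertex-supports. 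Without finiteness, $n$ pairwise mutually singular measures could sit inside a simplex of dimension exceeding $n$ without being extreme points, which is precisely the phenomenon that blocks a general answer to the question of whether speedup equivalence coincides with orbit equivalence.
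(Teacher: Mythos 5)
Your argument is correct and follows essentially the same route as the paper: the paper's proof also pushes the ergodic measures forward under the injection from the Main Theorem, uses Proposition~\ref{MutuallySingular} to get pairwise mutually singular images, and then applies exactly your pigeonhole on the supports in the ergodic decomposition (the paper's $E(\mu)$ is your ``vertex-support'') to conclude that extreme points go to extreme points, that $F_*$ is an affine bijection of simplices, and hence that Theorem~\ref{GPS}(3) applies. The only cosmetic difference is that you spell out, via the two injections in both directions, why $|\partial_e M(X_1,T_1)| = |\partial_e M(X_2,T_2)|$, a step the paper asserts follows immediately from the same ingredients.
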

\begin{proof}
	Since $T_{1}\leftrightsquigarrow T_{2}$, combining part $(3)$ of Theorem~\ref{Main Theorem} and Proposition~\ref{MutuallySingular} it follows immediately that 
    $$
    |\partial_{e}(M(X_{1},T_{1}))|=|\partial_{e}(M(X_{2},T_{2}))|
    $$ 
    and without loss of generality we may assume $|\partial_{e}(M(X_{1},T_{1}))|=n$ for some $n\in\Z^{+}$. Every measure $\mu$ in $M(X_{2},T_{2})$ is a convex combination of ergodic measures in a unique way. With this in mind for $\mu\in M(X_{2},T_{2})$, let $E(\mu)$ denote the collection of all ergodic measures of $M(X_{2},T_{2})$ which have a positive coefficient in the unique ergodic decomposition of $\mu$. Observe if $\mu_{1},\mu_{2}\in M(X_{2},T_{2})$ with $\mu_{1}\neq\mu_{2}$ and $\mu_{1}\perp\mu_{2}$, then
	$$
	E(\mu_{1})\intersect E(\mu_{2})=\emptyset.
	$$
	Now as $T_{1}\rightsquigarrow T_{2}$ there exists $\varphi:X_{1}\rightarrow X_{2}$, a homeomorphism, such that 
	$$
	\varphi_{*}:M(X_{1},T_{1})\hookrightarrow M(X_{2},T_{2})
	$$
	is an injection. Since $|\partial_{e}(M(X_{1},T_{1}))|=|\partial_{e}(M(X_{2},T_{2}))|=n$ and $\varphi_{*}$ is injective, we see that $\{E(\varphi_{*}(\nu_{i}))\}_{i=1}^{n}$, where $\{\nu_{i}\}_{i=1}^{n}=\partial_{e}(M(X_{1},T_{1}))$, is a collection of $n$ pairwise disjoint sets, as distinct ergodic measures are mutually singular. It follows that for each $i=1,2,\dots,n$, $E(\nu_{i})$ is a distinct singleton, and therefore $\varphi_{*}(\partial_{e}(M(X_{1},T_{1})))=\partial_{e}(M(X_{2},T_{2}))$. Coupling the facts that $\varphi_{*}$ is an affine map and a bijection on extreme points, we may conclude that $\varphi_{*}$ is a bijection, and hence is an affine homeomorphism between $M(X_{1},T_{1})$ and $M(X_{2},T_{2})$ arising from a space homeomorphism. Therefore, by \cite[Thm. $2.2$]{GPS} $(X_{1},T_{1})$ and $(X_{2},T_{2})$ are orbit equivalent.
\end{proof}

There are two obstacles which arise when trying to extend Theorem~\ref{speedupequivalence} to the infinite dimensional case. The first is whether or not it is always true that
$$
\varphi_{*}(\partial_{e}(M(X_{1},T_{2})))\subseteq\partial_{e}(M(X_{2},T_{2}))
$$
whenever $T_{1}\rightsquigarrow T_{2}$. The second is whether the Schr$\ddot{o}$der-Bernstein Theorem holds in the category of simple dimension groups with our morphisms. The Schr$\ddot{o}$der-Bernstein Theorem for dimension groups and simple dimension groups was addressed in the Glasner and Weiss paper \cite{Glasner-Weiss}, which we discuss below.

We must remark that speedup equivalence looks quite similar to weak orbit equivalence, especially in terms of weakly isomorphic dimension groups. Observe that we have surjective homomorphisms and the key difference is that we require our homomorphism to exhaust the positive cone in the image space. We mention this here because one avenue to try to answer the speedup equivalence question would be to show that given two dimension groups $(G_{1},G_{1}^{+},\textbf{1})$ and $(G_{2},G_{2}^{+},\textbf{1})$ with surjective group homomorphisms $\varphi_{1},\varphi_{2}$ satisfying 
\begin{align*}
\varphi_{1}&:G_{1}\rightarrow G_{2}\text{ and $\varphi_{1}(G_{1}^{+})=G_{2}^{+},\, \varphi_{1}(\textbf{1})=\textbf{1}$} \\
\varphi_{2}&:G_{2}\rightarrow G_{1}\text{ and $\varphi_{2}(G_{2}^{+})=G_{1}^{+},\, \varphi_{2}(\textbf{1})=\textbf{1}$},
\end{align*}
then in fact $(G_{1},G_{1}^{+},\textbf{1})\cong(G_{2},G_{2}^{+},\textbf{1})$. However, Glasner and Weiss, in \cite{Glasner-Weiss}, gave a beautiful counter example, Example $4.2$, which shows that even if $Inf\,G=0$ the Schr$\ddot{o}$eder-Bernstein theorem fails for simple dimension groups. Unfortunately, their example fails to exhaust the positive cone. Since this cannot happen with speedups, this example would need some modification to apply. 
\section{Example}
We will now use the main theorem, Theorem~\ref{Main Theorem}, to show the aforementioned claim: speedups can leave the conjugacy class and even the orbit equivalence class of the original system. This will be demonstrated by showing that the simplex of invariant Borel probability measure can grow: see Proposition~\ref{Simplex inequality}. To do so we will use Theorem~\ref{Main Theorem} in conjunction with Theorem~\ref{measures-states}, which recall says that states and invariant measures are in bijective correspondence. 

Let $(X,T)$ be the dyadic odometer. It is well known that the dimension group associated to this system is $(\Z[\frac{1}{2}],\Z[\frac{1}{2}]^{+},\textbf{1})$.  Since $(X,T)$ is uniquely ergodic, by Proposition~\ref{measures-states} it follows that $(\Z[\frac{1}{2}],\Z[\frac{1}{2}]^{+},\textbf{1})$ has only one state. Our goal is to construct a simple dimension group with two states, such that it factors onto $(\Z[\frac{1}{2}],\Z[\frac{1}{2}]^{+},\textbf{1})$ in the sense of the main theorem. One can show that the following is a dimension group
\begin{center}
$(\Z[\frac{1}{2}]\oplus\Z[\frac{1}{2}],\Z[\frac{1}{2}]^{++}\oplus\Z[\frac{1}{2}]^{++}\union\{(0,0)\},(1,1))$
\end{center}
where 
\begin{center}
$\Z[\frac{1}{2}]^{++}=\{x\in\Z[\frac{1}{2}]:x>0\}$.
\end{center}
Note, the Riesz interpolation property is satisfied as $\Z[\frac{1}{2}]$ is a totally ordered set. To see that $(\Z[\frac{1}{2}]\oplus\Z[\frac{1}{2}],\Z[\frac{1}{2}]^{++}\oplus\Z[\frac{1}{2}]^{++}\union\{(0,0)\}, (1,1))$ is a simple dimension group we use the following lemma from \cite{Goodearl}.
\begin{lemma}\cite[Lemma $14.1$]{Goodearl}\label{simple}
Let $G$ be a nonzero directed Abelian group. Then $G$ is simple if and only if every nonzero element of $G^{+}$ is an order-unit in $G$.
\end{lemma}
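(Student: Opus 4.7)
The plan is to prove both implications by directly using the definitions of order ideal and order unit, together with the defining property $G = G^{+}-G^{+}$ of a directed group. The key device in both directions will be the order ideal generated by a single positive element.

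For the forward direction, assume $G$ is simple and let $u\in G^{+}\setminus\{0\}$. I would construct the order ideal generated by $u$ by setting
$$
J^{+}_{u}=\{a\in G^{+}:a\le nu\text{ for some }n\in\N\},\qquad J_{u}=J_{u}^{+}-J_{u}^{+}.
$$
Routine verification shows $J_{u}$ is a subgroup, that $J_{u}\intersect G^{+}=J_{u}^{+}$ (using the fact that if $a=b-c$ with $b,c\in J_{u}^{+}$ and $a\ge 0$, then $0\le a\le b\le nu$), and that if $0\le a\le b\in J_{u}$ then $a\in J_{u}^{+}$. Hence $J_{u}$ is an order ideal of $G$. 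Since $u\in J_{u}\setminus\{0\}$, simplicity forces $J_{u}=G$. Thus for any $a\in G$ we may write $a=b-c$ with $b,c\in J_{u}^{+}$, and picking $n$ with $b\le nu$ gives $a\le nu$. Therefore $u$ is an order unit.

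For the reverse direction, assume every nonzero element of $G^{+}$ is an order unit and let $J$ be any nonzero order ideal. Because $J=J^{+}-J^{+}$, $J$ being nonzero forces $J^{+}\neq\{0\}$, so there exists $u\in J^{+}\setminus\{0\}$. By hypothesis $u$ is an order unit of $G$. For arbitrary $a\in G^{+}$, pick $n\in\N$ with $a\le nu$; since $J$ is a subgroup, $nu\in J$, and $nu\ge 0$, so $nu\in J^{+}$, whence the order ideal condition yields $a\in J$. Thus $G^{+}\subseteq J$, and since $G=G^{+}-G^{+}$ is directed, $J=G$. So $G$ has no nontrivial order ideals, i.e. $G$ is simple.

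Neither direction is really an obstacle; the only subtle point is making sure the set $J_{u}$ constructed in the forward direction satisfies \emph{both} clauses in the definition of an order ideal (in particular the identity $J_{u}\intersect G^{+}=J_{u}^{+}$), which is where the hypothesis that $G$ is directed gets used implicitly. I would write that verification out carefully, and the rest of the argument is bookkeeping.
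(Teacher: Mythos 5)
Your proof is correct: both the verification that $J_{u}=J_{u}^{+}-J_{u}^{+}$ with $J_{u}^{+}=\{a\in G^{+}:a\le nu\}$ is an order ideal containing $u$, and the reverse argument that a nonzero order ideal must absorb all of $G^{+}$ once it contains an order unit, go through exactly as you describe. The paper itself gives no proof (it cites Goodearl's Lemma 14.1 directly), and your argument is essentially the standard one found there, so there is nothing further to compare.
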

An ordered group $G$ is \emph{directed} if all elements of $G$ have the form $x-y$ for some $x,y\in G^{+}$ and so any dimension group is directed. One can use Lemma~\ref{simple} to show that $(\Z[\frac{1}{2}]\oplus\Z[\frac{1}{2}],\Z[\frac{1}{2}]^{++}\oplus\Z[\frac{1}{2}]^{++}\union\{(0,0)\}, (1,1))$ is indeed a simple dimension group. Now by Theorem~\ref{DimGpReal} there exists minimal Cantor system $(X_{2},T_{2})$ such that 
\begin{center}
$(K^{0}(X_{2},T_{2}),K^{0}(X_{2},T_{2})^{+},\textbf{1})\cong(\Z[\frac{1}{2}]\oplus\Z[\frac{1}{2}],\Z[\frac{1}{2}]^{++}\oplus\Z[\frac{1}{2}]^{++}\union\{(0,0)\}, (1,1))$.
\end{center}
In addition, one can verify that
\begin{center}
$\pi_{1}:(\Z[\frac{1}{2}]\oplus\Z[\frac{1}{2}],\Z[\frac{1}{2}]^{++}\oplus\Z[\frac{1}{2}]^{++}\union\{(0,0)\}, (1,1))\rightarrow(\Z[\frac{1}{2}],\Z[\frac{1}{2}]^{+},1)$
\end{center}
satisfies condition $(2)$ of the main theorem, Theorem~\ref{Main Theorem}, whence $(X_{2},T_{2})$ is a speedup of $(X,T)$ the dyadic odometer. 

We will now show $(X_{2},T_{2})$ and $(X,T)$ are not conjugate to one another, hence speedups can leave their conjugacy classes. Furthermore, we will actually show that $(X_{2},T_{2})$ and $(X,T)$ cannot even be orbit equivalent. To see this, it suffices to show, by \cite[Theorem $2.2$]{GPS}, that their respective dimension groups modulo infinitesimals are not isomorphic as dimension groups. To accomplish this we will use states and show $(\Z[\frac{1}{2}]\oplus\Z[\frac{1}{2}],\Z[\frac{1}{2}]^{++}\oplus\Z[\frac{1}{2}]^{++}\union\{(0,0)\}, (1,1))$ and $(\Z[\frac{1}{2}],\Z[\frac{1}{2}]^{+},1)$ have different state spaces. Recall all states can be realized as integration against invariant probability measures, hence as $(X,T)$ is uniquely ergodic it has exactly one state, namely the identity map. Thus, it suffices to show that $(X_{2},T_{2})$ has more than one invariant measure, or more to the point, that $(\Z[\frac{1}{2}]\oplus\Z[\frac{1}{2}],\Z[\frac{1}{2}]^{++}\oplus\Z[\frac{1}{2}]^{++}\union\{(0,0)\}, (1,1))$ has more than one state.

Before we begin we must deal with one technical aspect, that is, we know that $(K^{0}(X_{2},T_{2}),K^{0}(X_{2},T_{2})^{+},\textbf{1})$ is isomorphic as a dimension group to $(\Z[\frac{1}{2}]\oplus\Z[\frac{1}{2}],\Z[\frac{1}{2}]^{++}\oplus\Z[\frac{1}{2}]^{++}\union\{(0,0)\}, (1,1))$, so we must show that this group has only trivial infinitesimals. Recall infinitesimals evaluate to $0$ for every state on the dimension group, and 
\begin{center}
$\pi_{i}:(\Z[\frac{1}{2}]\oplus\Z[\frac{1}{2}],\Z[\frac{1}{2}]^{++}\oplus\Z[\frac{1}{2}]^{++}\union\{(0,0)\}, (1,1))\rightarrow\R$
\end{center}
$i=1,2$ are states. From this we can deduce that the only infinitesimal of $(\Z[\frac{1}{2}]\oplus\Z[\frac{1}{2}],\Z[\frac{1}{2}]^{++}\oplus\Z[\frac{1}{2}]^{++}\union\{(0,0)\}, (1,1))$ is $(0,0)$, hence
\begin{center}
$(K^{0}(X_{2},T_{2})/Inf(K^{0}(X_{2},T_{2})),K^{0}(X_{2},T_{2})^{+}/Inf(K^{0}(X_{2},T_{2})),\textbf{1})\cong(\Z[\frac{1}{2}]\oplus\Z[\frac{1}{2}],\Z[\frac{1}{2}]^{++}\oplus\Z[\frac{1}{2}]^{++}\union\{(0,0)\}, (1,1))$
\end{center}
as dimension groups. Furthermore, as $\pi_{1}\neq\pi_{2}$, on $(\Z[\frac{1}{2}]\oplus\Z[\frac{1}{2}],\Z[\frac{1}{2}]^{++}\oplus\Z[\frac{1}{2}]^{++}\union\{(0,0)\}, (1,1))$, we have
\begin{center}
$(\Z[\frac{1}{2}],\Z[\frac{1}{2}]^{+},1)\ncong(\Z[\frac{1}{2}]\oplus\Z[\frac{1}{2}],\Z[\frac{1}{2}]^{++}\oplus\Z[\frac{1}{2}]^{++}\union\{(0,0)\}, (1,1))$
\end{center}
and so $(X,T)$ and $(X_{2},T_{2})$ are not orbit equivalent, hence not conjugate.


\begin{thebibliography}{AOW 95}
\bibitem[A 05]{Akin} Ethan Akin, \emph{Good Measures on Cantor Space}, Trans. Amer. Math. Soc. $357\, (2005),\, $no. $7,\, 2681-2722$.

\bibitem[AOW 85]{AOW} Arnoux, Pierre, Ornstein, Donald S., Weiss, Benjamin. \emph{Cutting and stacking, interval exchanges and geometric models.} Israel J. Math. $50$ $(1985)$ no. $1-2,\, 160--168$.

\bibitem[BBF 13]{BBF} Babichev, Andrey; Burton, Robert M.; Fieldsteel, Adam \emph{Speedups of ergodic group extensions}. Ergodic Theory Dynam. Systems $33$ $(2013)$, no. $4$, $969-982$.
.
\bibitem[BS]{BS} Brin, Michael; Stuck, Garrett, \emph{Introduction to dynamical systems.} Cambridge University Press, Cambridge, $2002$. $xii+240$ pp. ISBN: $0-521-80841-3$.

\bibitem[D]{CANT} Durand, Fabien, \emph{Combinatorics on Bratteli diagrams and dynamical systems. Combinatorics, Automata and Number Theory}, $324-372$, Encyclopedia Math. Appl., $135$, Cambridge Univ. Press, Cambridge, $2010$.

\bibitem[Dye 59]{Dye} Dye, H.A. \emph{On groups of measure preserving transformation. I.} Amer. J. Math. $81,\, 1959\, 119-159$.

\bibitem[E]{Effros} E.G. Effros, \emph{Dimensions and $C^{*}$-Algebras,} Conference Board Math. Sci. $46$, Amer. Math. soc., Providence, R.I., 1982

\bibitem[EHS 80]{EHS} E. Effros, D. Handelman, and C.-L. Shen, \emph{Dimension groups and their affine representations.}, Amer. J. Math. $102\, (1980),\, 385-407.$

\bibitem[Ell 76]{Elliot} G.A. Elliot, \emph{On the classification of inductive limits of sequences of semisimple finite dimensional algebras}, J. Algebra $38$ $(1976),$ $29-44$.

\bibitem[GPS 95]{GPS} T. Giordano, I.F. Putnam, C.F. Skau, Topological orbit equivalence and $C^{*}$-crossed products, J. Reine Angew. Math. $469$ $(1995)$ $51-111$.

\bibitem[GW 95]{Glasner-Weiss} Glasner, Eli; Weiss, Benjamin \emph{Weak orbit equivalence of Cantor minimal systems.} Internat. J. Math. $6$ ($1995$), no $4$, $559-579$

\bibitem[G]{Goodearl} K.R. Goodearl, Partially Ordered Abelian Groups with Interpolation, Mathematical Surveys and Monographs, vol. $20$, American Mathematical Society, Providence, RI, $1986$.

\bibitem[HPS 92]{HPS} Herman, Richard H.; Putnam, Ian F.; Skau, Christian F. \emph{Ordered Bratteli diagrams, dimension groups and topological dynamics.} Internat. J. Math. $3\, (1992),\, no.\, 6,\, 827-864.$

\bibitem[JM 14]{JM} Johnson, Aimee S. A.; McClendon, David M. \emph{Speedups of ergodic group extensions of $\Z^{d}$-actions.} Dyn. Syst. $29$ $(2014)$, no. $2$, $255-284$.

\bibitem[N1 69]{Neveu1} Neveu, Jacques \emph{Temps d'arr$\hat{e}$t d'un syst$\grave{e}$me dynamique.} Z. Wahrscheinlichkeitstheorie und Verw. Gebiete $13$ $1969$ $81-94$.

\bibitem[N2 69]{Neveu2} Neveu, Jacques \emph{Une d$\acute{e}$monstration simplifi$\acute{e}$e et une extension de la formule d'Abramov sur l'entropie des transformations induites.} Z. Wahrscheinlichkeitstheorie und Verw. Gebiete $13\, 1969\, 135-140.$

\bibitem[Pe]{Petersen} Petersen, Karl \emph{Ergodic Theory}. Cambridge Studies in Advanced Mathematics, $2$. \emph{Cambridge University Press, Cambridge,} $1983$. $xii+329$ pp. ISBN: $0-521-23632-0$.

\bibitem[Pu 89]{Putnam} Putnam, Ian F. \emph{The $C^{*}$-algebras associated with minimal homeomorphisms of the Cantor set.} Pacific J. Math. $136\, (1989),$ no. $2$, $329-353.$

\bibitem[W]{Walters} Walters, Peter, \emph{An Introduction to Ergodic Theory} Graduate Texts in Mathematics, $79$. Springer-Verlag, New York, $2000$ ix+250 pp. ISBN:0-387-95152-0.
\end{thebibliography}
\end{document}